\definecolor{darkred}{RGB}{200, 60, 0}
\definecolor{mildblue}{RGB}{0, 100, 250}
\newcommand{\br}{\mathbb{R}}
\newcommand{\bz}{\mathbb Z}
\newcommand{\bn}{\mathbb N}
\newcommand{\vp}{\varphi}
\newcommand{\ssm}{\smallsetminus}
\DeclareMathOperator{\mcg}{MCG}
\DeclareMathOperator{\Homeo}{Homeo}
\DeclareMathOperator{\supp}{supp}
\DeclareMathOperator{\Sym}{Sym}
\DeclareMathOperator{\PH}{PH}
\DeclareMathOperator{\HH}H
\DeclareMathOperator{\F}F
\renewcommand{\co}{\colon\thinspace}
\newtheorem{Thm}{Theorem}[section]
\newtheorem{Thm*}{Theorem}
\newtheorem{Prop}[Thm]{Proposition}
\newtheorem{Lem}[Thm]{Lemma}
\newtheorem{Cor}[Thm]{Corollary}
\newtheorem{Cor*}[Thm*]{Corollary}
\newtheorem*{MainThm1}{\Cref{thm:normal generators 1}}
\newtheorem*{MainThm2}{\Cref{thm:mainthm2}}
\newtheorem*{MainThm3}{\Cref{thm:main3}}
\newtheorem*{MainThm4}{\Cref{thm:ses}}
\newtheorem*{MainThm5}{\Cref{thm:normal generators F}}
\newtheorem*{MainThm5'}{\Cref{thm:perfect F}}
\newtheorem*{MainThm6}{\Cref{thm:abelianization}}
\newtheorem*{MainThm7}{\Cref{thm:cardinality}}
\theoremstyle{definition}
\newtheorem{Def}[Thm]{Definition}
\newtheorem*{Ex*}{Examples}
\newtheorem{Rem}[Thm]{Remark}
\numberwithin{equation}{section}
\title{Algebraic and geometric properties of \\homeomorphism groups of ordinals}
\author{Megha Bhat}
\address{Department of Mathematics \\ CUNY Graduate Center \\ New York, NY 10016}
\email{mbhat@gradcenter.cuny.edu}
\author{Rongdao Chen}
\address{Department of Mathematics \\ CUNY Graduate Center \\ New York, NY 10016}
\email{rchen1@gradcenter.cuny.edu}
\author{Adityo Mamun}
\address{Department of Mathematics \\ CUNY Queens College \\ Flushing, NY 11367}
\email{adityo.mamun23@qmail.cuny.edu}
\author{Ariana Verbanac}
\address{Department of Mathematics \\ Temple University \\ Philadelphia, PA 19122}
\email{ariana.verbanac@temple.edu}
\author{Eric Vergo}
\address{Independent researcher}
\email{ericvergo@gmail.com}
\author{Nicholas G. Vlamis}
\address{Department of Mathematics \\ CUNY Graduate Center \\ New York, NY 10016, and \newline Department of Mathematics \\ CUNY Queens College \\ Flushing, NY 11367}
\email{nvlamis@gc.cuny.edu}
\begin{document}  

\begin{abstract}
We study the homeomorphism groups of ordinals equipped with their order topology, focusing on successor ordinals whose limit capacity is also a successor.  
This is a rich family of groups that has connections to both permutation groups and homeomorphism groups of manifolds. 

For ordinals of Cantor--Bendixson degree one, we prove that the homeomorphism group is strongly
distorted and uniformly perfect, and we classify its normal generators. 
As a corollary, we recover and provide a new proof of the classical result that the subgroup of finite permutations in the symmetric group on a countably infinite set is the maximal proper normal subgroup.

For ordinals of higher Cantor--Bendixson degree, we establish a semi-direct product decomposition of the (pure) homeomorphism group.  
When the limit capacity is one, we further compute the abelianizations and determine normal generating sets of minimal cardinality for
these groups.

\end{abstract}

\maketitle

\vspace{-0.5in}

%-------------------
% Introduction
%-------------------

\section{Introduction}
Recall that an \emph{ordinal} is the order-isomorphism class of a totally ordered set that is \emph{well-ordered} in the sense that every nonempty subset has a smallest element. 
Every totally ordered set admits a topology---\emph{the order topology}---with a subbasis given by the open rays \( \{ x: x < y \} \) and \( \{x: y < x \} \) ranging over the elements \( y \) in the set.
As an order isomorphism between totally ordered sets induces a homeomorphism between their respective order topologies, an ordinal has a well-defined homeomorphism type, allowing us to study the homeomorphism group of an ordinal. 

Motivated by recent results regarding homeomorphism groups of surfaces, we investigate several algebraic and geometric properties of homeomorphism groups of ordinals.
We provide a discussion of this motivation from geometric topology after introducing our main results.

For an introduction to ordinals and the terminology that follows, see \Cref{sec:ordinals}. 
Our results focus on successor ordinals, or equivalently, the compact ordinals. 
The homeomorphism type of an infinite successor ordinal is classified by two parameters: an ordinal \( \alpha \) (called the \emph{limit capacity}) and a natural number \( d \) (called the \emph{coefficient}). 
The limit capacity and the coefficient are invariants of a well-ordered set that can be computed topologically as follows: the limit capacity is equal to the predecessor of the \emph{Cantor--Bendixson rank} and the coefficient is equal to the \emph{Cantor--Bendixson degree}.
We study the successor ordinals whose limit capacity is itself a successor ordinal. 

Let us fix some notation.
Given an ordinal \( \alpha \) and a natural number \( d \), we let \( X_{\alpha,d} \) denote a well-ordered set representing a compact ordinal whose limit capacity is \( \alpha + 1 \) and whose coefficient is \( d \). 
We now let \( \HH_{\alpha,d} \) denote \( \Homeo(X_{\alpha,d}) \), the group consisting of homeomorphisms \( X_{\alpha,d} \to X_{\alpha,d} \). 
In \Cref{sec:ordinals}, we will introduce a canonical choice for \( X_{\alpha,d} \) using von Neumann ordinals and the classification of ordinals up to homeomorphism; in particular, in later sections, we set \( X_{\alpha,d} = \omega^{\alpha+1}\cdot d + 1 \).

Our first slate of results is for the case when the coefficient is one. 
Groups in this class generalize \( \Sym(\bn) \), the symmetric group on the natural numbers, as \( \Sym(\bn) \) is isomorphic to \(\HH_{0,1} \).

An element \( g \) in a group \( G \) is a \emph{normal generator} of \( G \) if every element of \( G \) can be expressed as a product of conjugates of \( g \) and \( g^{-1} \); it is a \emph{uniform normal generator} of \( G \) if there exists \( k \in \bn \)\footnote{Throughout, we use the convention that \( 0 \notin \bn \).} such that every element of \( G \) can be expressed as a product of at most \( k \) conjugates of \( g \) and \( g^{-1} \), and the minimum such \( k \) is called the \( g \)-width of \( G \). 

Given \( x \in X_{\alpha,d} \), the set \( \{ y : y \leq x \} \) is a well-ordered set, allowing us to define the \emph{capacity} of \( x \) to be equal to the limit capacity of \( \{ y: y \leq x \} \).

\begin{MainThm1}[Normal generators]
    Let \( \alpha \) be an ordinal. 
		For \( h \in\HH_{\alpha,1} \), the following are equivalent:
    \begin{enumerate}[(i)]
				\item \( h \) normally generates   \(\HH_{\alpha,1} \).
				\item \( h \) uniformly normally generates \(\HH_{\alpha,1} \).
				\item \( h \) induces an infinite permutation of the capacity \( \alpha \) elements of \( X_{\alpha,1} \).  
    \end{enumerate}
		Moreover, if one---and hence all---of the above conditions are satisfied, then the \( h \)-width of \(\HH_{\alpha,d} \) is at most twelve. 
\end{MainThm1}

The proof of Theorem~\ref{thm:normal generators 1} relies on a uniform fragmentation result---a generalization of a lemma of Galvin \cite{GalvinGenerating} from permutation groups---and a technique from Anderson's proof \cite{AndersonAlgebraic} of the algebraic simplicity of several transformation groups, including the homeomorphism group of the Cantor set and the homeomorphism group of the 2-sphere.
As an immediate consequence, \(\HH_{\alpha,1} \) has a maximal proper normal subgroup. 

\begin{Cor}
\label{cor:max subgroup}
Let \( \alpha \) be an ordinal. 
The subgroup  of \(\HH_{\alpha,1} \) consisting of homeomorphisms that induce finite permutations on the capacity \( \alpha \) elements in \( X_{\alpha,1} \) is the maximal proper normal subgroup of \( H_{\alpha,1} \), that is, it contains every proper normal subgroup.  
\qed
\end{Cor}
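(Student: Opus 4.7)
The plan is to deduce the corollary almost directly from \Cref{thm:normal generators 1}. There are two things to show: first, that \( \Fin(\omega^{\alpha+1}) \) is a proper normal subgroup; second, that any proper normal subgroup is contained in it.

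For the first point, the action of \( \Homeo(\omega^{\alpha+1}) \) on the set \( M \) of maximal rank elements of \( \omega^{\alpha+1} \) gives a group homomorphism \( \Phi \co \Homeo(\omega^{\alpha+1}) \to \Sym(M) \). The set \( \Sym_{\mathrm{fin}}(M) \) of finitely supported permutations of \( M \) is a normal subgroup of \( \Sym(M) \), and \( \Fin(\omega^{\alpha+1}) = \Phi^{-1}(\Sym_{\mathrm{fin}}(M)) \), so it is a normal subgroup of \( \Homeo(\omega^{\alpha+1}) \). To see it is proper, I would exhibit an explicit homeomorphism inducing an infinite permutation on \( M \); for instance, one that transposes the pairs \( (\omega^\alpha \cdot (2k-1),\omega^\alpha \cdot 2k) \) for all \( k \in \bn \) (together with appropriate homeomorphisms of the intermediate order intervals) moves every maximal rank element except \( \omega^{\alpha+1} \) itself.

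For the second point, suppose \( N \trianglelefteq \Homeo(\omega^{\alpha+1}) \) is not contained in \( \Fin(\omega^{\alpha+1}) \). Choose \( h \in N \ssm \Fin(\omega^{\alpha+1}) \). By definition of \( \Fin \), the element \( h \) induces an infinite permutation of \( M \). The implication (iii) \(\Rightarrow\) (i) of \Cref{thm:normal generators 1} then tells us that \( h \) normally generates \( \Homeo(\omega^{\alpha+1}) \). Since \( N \) is normal and contains \( h \), it contains every conjugate of \( h^{\pm 1} \) and hence all of \( \Homeo(\omega^{\alpha+1}) \). This shows that every proper normal subgroup lies inside \( \Fin(\omega^{\alpha+1}) \).

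There is no real obstacle here beyond applying \Cref{thm:normal generators 1}; the only mildly nontrivial step is confirming that \( \Fin(\omega^{\alpha+1}) \) is a proper subgroup of \( \Homeo(\omega^{\alpha+1}) \), which amounts to writing down one homeomorphism whose induced permutation on \( M \) has infinite support. Everything else is bookkeeping about preimages of normal subgroups under \( \Phi \) and the standard observation that a normal subgroup containing a normal generator is the whole group.
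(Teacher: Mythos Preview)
Your argument is correct and matches the paper's approach: the corollary is marked with \qed\ in the paper precisely because it follows immediately from \Cref{thm:normal generators 1} in the way you describe. One tiny slip: \(\omega^{\alpha+1}\) is not itself an element of \(\omega^{\alpha+1}\), so your transposition example actually moves \emph{every} maximal rank element, which only strengthens the point.
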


As \(\HH_{0,1} \) is isomorphic to \( \Sym(\bn) \), \Cref{cor:max subgroup} recovers the classical theorem of Schreier--Ulam \cite{Schreier1933} that the subgroup of finite permutations in \( \Sym(\bn) \) is its maximal normal subgroup. 
The proof presented here is arguably simpler than the standard proof given in classical texts such as \cite{ScottGroup}, which relies on cycle decompositions. 
Similarly, \Cref{thm:normal generators 1} can be viewed as a generalization of a theorem of Bertram \cite{BertramTheorem} showing that all infinite permutations in \( \Sym(\bn) \) are uniform normal generators.
Unlike Bertram's proof, our proof does not rely on cycle decompositions. 
Bertram proves that any permutation in \( \Sym(\bn) \) can be expressed as a product of four conjugates of any infinite permutation, raising the question of what the optimal constant  in \Cref{thm:normal generators 1} is. 

Recall that the \emph{commutator subgroup} of a group \( G \), denoted \( [G,G] \), is  the subgroup generated by the set \( \{ [x,y] : x,y \in G \} \), where \( [x,y] = xyx^{-1}y^{-1} \).
A group \( G \) is \emph{perfect} if it is equal to its commutator subgroup.
It is \emph{uniformly perfect} if there exists \( k \in \bn \) such that every element of \( G \) can be expressed as a product of \( k \) commutators; the minimum such \( k \) is called the \emph{commutator width} of \( G \).  

It is not difficult to find a commutator in \(\HH_{\alpha,1} \) that induces an infinite permutation of the capacity \( \alpha \) elements in \( X_{\alpha,1} \); \Cref{thm:normal generators 1} then implies that \( H_{\alpha,1} \) is a uniformly perfect group of commutator width at most twelve. 
A direct proof, which is simpler than the proof of \Cref{thm:normal generators 1}, can be given of this fact that provides a better bound on the commutator width of \( G \).
This is our second main result.

\begin{MainThm2}
	If \( \alpha \) is an ordinal, then \(\HH_{\alpha,1} \) is uniformly perfect, and the commutator width is at most three. 
\end{MainThm2}

A theorem of Ore \cite{OreSome} says that every element of \( \Sym(\bn) \) can be expressed as a single commutator. 
It is natural to ask whether the same is true for \(\HH_{\alpha,1} \). 

Continuing in analogy with \( \Sym(\bn) \), we turn to a geometric property.
In \cite{BergmanGenerating}, Bergman showed that any left-invariant metric on \( \Sym(\bn) \) has bounded diameter; in the literature, this property is referred to as \emph{strong boundedness} or the \emph{Bergman property}. 
There are several equivalent definitions of strongly bounded, one of relevance here is that every group action on a metric space has bounded orbits.
As a consequence, any homomorphism from a strongly bounded group to a countable group has finite image, a fact we use below.
Note that strong boundedness is a property of a group as a \emph{discrete} group, so there are no continuity requirements for the metrics, actions, or homomorphisms in the  above discussion. 

Adapting a technique appearing several places in the literature (see \cite[Construction~2.3]{LeRouxStrong}), we establish \( \HH_{\alpha,1} \) is strongly distorted.
The notion of strong distortion was introduced by Calegari--Freedman in \cite{CalegariDistortion}, where they established strong distortion for  homeomorphism groups of spheres; in the appendix of the same paper,  Cornulier showed that if \( G \) is strongly distorted, then \( G \) is strongly bounded. 
See \Cref{sec:strong distortion} for the definition of strong distortion.

\begin{MainThm3}
	For every ordinal \( \alpha \), the group \( \HH_{\alpha,1} \) is strongly distorted.
\end{MainThm3}

Strong distortion also implies several other properties, including uncountable cofinality and the Schreier property; we refer the reader to the introduction of \cite{LeRouxStrong} for a thorough discussion.
As strong distortion implies strong boundedness, Theorem~\ref{thm:main3}, in the case \( \alpha = 1 \), establishes that \( \mathrm{Sym}(\bn) \) is strongly bounded, recovering Bergman's  result \cite{BergmanGenerating} and providing a simpler proof. 

We finish our discussion of the degree one case by observing that we cannot drop the hypothesis that the limit capacity is a successor. 
It is a consequence of \cite[Theorem~3.9]{DowThin} that every countable group is a quotient of \( \Homeo(\omega_1) \), where \( \omega_1 \) is the first uncountable ordinal. 
As a consequence, \( \Homeo(\omega_1) \) can be neither uniformly perfect, strongly bounded, nor strongly distorted.

We now turn to the case of coefficient, or equivalently Cantor--Bendixson degree, greater than one, investigating the algebraic structure of \( \HH_{\alpha,d} \) and establishing the failure of the above results when \( d > 1 \).
There are several normal subgroups of \( \HH_{\alpha,d} \) that are essential to our results, which we must introduce.

The first is \( \F_{\alpha,d} \), the subgroup consisting of the homeomorphisms of \( X_{\alpha,d} \) inducing a finite permutation of the capacity \( \alpha \) elements of \( X_{\alpha,d} \).
The second is obtained by equipping \( \HH_{\alpha,d} \) with the compact-open topology and taking the closure of \( \F_{\alpha,d} \), which we denote \( \overline \F_{\alpha,d} \). 
Lastly, we let \( \PH_{\alpha,d} \) denote the subgroup of \( \HH_{\alpha,d} \) fixing each element \( X_{\alpha,d} \) of capacity \( \alpha + 1 \).
Note that, as there are only \( d \) capacity \( \alpha +1 \) elements, \( \PH_{\alpha,d} \) is a finite-index subgroup, and the corresponding quotient group is isomorphic to \( \mathrm{Sym}(d) \), the symmetric group on \( d \) letters. 
Observe that \( \HH_{\alpha,1} = \PH_{\alpha, 1} = \overline \F_{\alpha,1} \), where the last equality follows from \Cref{thm:normal generators 1}.

Recall that a short exact sequence \( 1 \to A \to B \to C \to 1 \) is \emph{split} if the map \( B \to C \) admits a section. 
Also note that, equipped with the compact-open topology, \( \HH_{\alpha,d} \) is a topological group (see \Cref{sec:ordinals}).

\begin{MainThm4}

	Let \( \alpha \) be an ordinal, and let \( d \in \bn \ssm\{1\} \). 
	
	\begin{enumerate}[(1)]
	
		\item There exists a homomorphism \( \chi \co \PH_{\alpha,d} \to \bz^{d-1} \) such that 
				\[
					1 \longrightarrow \overline \F_{\alpha,d} \lhook\joinrel\longrightarrow \PH_{\alpha,d} \overset{\chi}\longrightarrow \bz^{d-1} \longrightarrow 1
				\]
				is a split short exact sequence. 
				
			\item The short exact sequence 
				\[
					1 \longrightarrow  \PH_{\alpha,d} \lhook\joinrel\longrightarrow \HH_{\alpha,d} \longrightarrow \Sym(d) \longrightarrow 1
				\]
				is split.
				
	\end{enumerate}
	
	Moreover, all the homomorphisms in the above exact sequences are continuous when \( \HH_{\alpha,d} \), \( \overline \F_{\alpha,d} \), and \( \PH_{\alpha,d} \) are equipped with the compact-open topology and \( \mathbb Z^{d-1} \) and \( \Sym(d) \) are equipped with the discrete topology.
\end{MainThm4}

If a topological group \( G \) is abstractly the semidirect product of its subgroups \( N \) and \( K \), then this semidirect product is \emph{topological} if both \( N \) and \( H \) are closed and the map \( N \times H \to G \) given by \( (n,h) \mapsto nh \) is a homeomorphism. 

As \( \chi \) is continuous with discrete image, the restriction of \( \chi \) to the image of a section is a homeomorphism, implying the image of any section is discrete. 
Moreover, the continuity of \( \chi \) and of the homomorphism \( \PH_{\alpha,d} \to \Sym(d) \) implies that \( \overline \F_{\alpha,d} \) and \( \PH_{\alpha,d} \) are clopen subgroups, respectively, from which it readily follows that \( \PH_{\alpha,d} \) is homeomorphic to \( \F_{\alpha,d} \times \mathbb Z^{d-1} \) and \( \HH_{\alpha,d} \) is homeomorphic to \( \PH_{\alpha,d} \times \Sym(d) \).

Recall that a short exact sequence is  split if and only if the middle group is a semidirect product of the other two groups.
Therefore, \Cref{thm:ses} yields a topological semi-direct product structure for \( \PH_{\alpha,d} \) and \( \HH_{\alpha,d} \), which we record in the following lemma.

\begin{Cor}(Topological semidirect product decomposition)
	Let \( \alpha \) be an ordinal, and let \( d \in \bn \) with \( d > 1 \).
	Both \( \PH_{\alpha,d} \) and \( \HH_{\alpha,d} \), equipped with the compact-open topology, admit topological semidirect product decompositions.
	Specifically, 
		\[ \PH_{\alpha,d} = \overline \F_{\alpha,d} \rtimes \bz^{d-1} \] 
	and 
	\[ \HH_{\alpha,d} = PH_{\alpha,d} \rtimes \Sym(d) = \left(\overline \F_{\alpha,d} \rtimes \bz^{d-1}\right) \rtimes \Sym(d),\] 
	where \( \mathbb Z^{d-1} \) and \( \Sym(d) \) are discrete and are identified with their images under the sections from \Cref{thm:ses}.
\end{Cor}

An explicit description of \( \chi \), and its section, is given in \Cref{sec:ses}, which comes from \( d-1 \) independent \emph{flux homomorphisms} counting the shifting of the capacity \( \alpha \) elements of \( X_{\alpha,d} \). 
As a consequence, we see that  \( \PH_{\alpha,d} \) and \( \HH_{\alpha,d} \) are neither perfect nor strongly bounded. 
For the latter, note that \( \PH_{\alpha,d} \) has a countably infinite quotient and hence is not strongly bounded per the discussion above, and as \( \HH_{\alpha,d} \) has a finite-index subgroup that fails to be strongly bounded, it must also fail to be strongly bounded. 

A topological group is \emph{coarsely bounded} if every continuous left-invariant metric on the group has bounded diameter (see \cite{RosendalBook}). 
Note that this is a weaker condition than being strongly bounded, as we have required the metrics under consideration to be continuous; in particular, a group is strongly bounded if and only if it is coarsely bounded as a discrete group. 
As mentioned in \Cref{thm:ses}, the homomorphism \( \chi \) is continuous, and hence it follows from \Cref{thm:ses} that \( \PH_{\alpha,d} \) (and hence \( \HH_{\alpha,d} \)) is not coarsely bounded.
Together with \Cref{thm:main3}, this yields the following corollary. 

\begin{Cor}
\label{cor:d=1}
	Let \( \alpha \) be an ordinal, and let \( d \in \bn \).
	The following are equivalent:
	\begin{enumerate}[(i)]
	
		\item \( \HH_{\alpha,d} \) is strongly distorted.
		\item \( \HH_{\alpha,d} \) is strongly bounded.
		\item \( \HH_{\alpha,d} \) is coarsely bounded.
		\item \( d = 1 \). 
		
	\end{enumerate}
\end{Cor}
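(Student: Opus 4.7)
The plan is to chain the four conditions into a cycle $(iv) \Rightarrow (i) \Rightarrow (ii) \Rightarrow (iii) \Rightarrow (iv)$; three of the four links are already at hand in the paper or in the cited literature. First, $(iv) \Rightarrow (i)$ is exactly \Cref{thm:main3} together with the identification $\HH_{\alpha,1} \cong \Homeo(\omega^{\alpha+1})$ made at the start of the introduction. The implication $(i) \Rightarrow (ii)$ I would cite directly from Cornulier's appendix to \cite{CalegariDistortion}, where it is shown that strong distortion implies strong boundedness. For $(ii) \Rightarrow (iii)$, a single sentence suffices: every continuous left-invariant metric on a topological group is in particular a left-invariant metric, so bounded diameter of the former class follows from bounded diameter of the latter.

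It remains to show $(iii) \Rightarrow (iv)$, which I would prove by contrapositive, essentially formalizing the discussion immediately preceding the statement. Suppose $d > 1$. By \Cref{thm:ses}(1), there is a surjective homomorphism $\chi \co \PH_{\alpha,d} \to \bz^{d-1}$ whose continuity (with respect to the compact-open topology on the domain and the discrete topology on the codomain) was observed in the paragraph above. Since coarse boundedness passes to continuous quotients and $\bz^{d-1}$ is discrete and unbounded when $d > 1$, the group $\PH_{\alpha,d}$ fails to be coarsely bounded. Now $\PH_{\alpha,d}$ is the kernel of the continuous action $\Phi \co \HH_{\alpha,d} \to \Sym(M_{\alpha,d})$ on a finite discrete group, so it is an open subgroup of finite index $d!$ in $\HH_{\alpha,d}$; because coarse boundedness transfers between a topological group and any open finite-index subgroup, $\HH_{\alpha,d}$ itself is not coarsely bounded.

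No step is hard in isolation: the analytic content lives in \Cref{thm:main3} and \Cref{thm:ses}, and the remaining implications are either tautological or standard in Rosendal's framework \cite{RosendalBook}. The only point worth verifying carefully is the transfer of coarse boundedness across an open finite-index subgroup, which reduces to the familiar observation that such a subgroup is coarsely equivalent to the ambient group via the bounded choice of finitely many coset representatives.
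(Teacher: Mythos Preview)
Your proposal is correct and follows essentially the same route as the paper: the implications $(iv)\Rightarrow(i)\Rightarrow(ii)\Rightarrow(iii)$ are handled exactly as the paper does (via \Cref{thm:main3}, Cornulier's appendix, and the tautological inclusion of continuous metrics among all metrics), and the contrapositive $(iii)\Rightarrow(iv)$ via the continuity of $\chi$ and the finite-index transfer through $\PH_{\alpha,d}$ is precisely the argument sketched in the paragraphs immediately preceding the corollary. You have simply made explicit the finite-index open subgroup step that the paper leaves as a parenthetical remark.
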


Recently, Branman--Domat--Hoganson--Lyman \cite[Theorem~A]{BranmanGraphical} showed that the homeomorphism group of a countable successor ordinal is coarsely bounded if and only if its coefficient is one. 
\Cref{cor:d=1} is a strengthening of this statement under the additional hypothesis that the limit capacity is a successor ordinal, and it also broadens the statement beyond the countable ordinals.
In the case of a countable coefficient one successor ordinal whose limit capacity is a limit ordinal, we do not know if its homeomorphism group is strongly bounded, despite knowing that it is coarsely bounded. 

Before moving on, we stress that \Cref{cor:d=1} considers \( \HH_{\alpha,d} \) equipped with the compact-open topology.
Given any ordinal \( \alpha \), Gheysens \cite{GheysensDynamics,GheysensHomeomorphism} has proven that \( \Homeo(\alpha) \), equipped with the topology of pointwise convergence, is a Roelcke precompact topological group, implying that it is coarsely bounded. 

Ideally, we would like to use  \Cref{thm:ses} to compute the abelianizations of \( \PH_{\alpha,d} \) and \( \HH_{\alpha,d} \), as well as to produce normal generating sets. 
The main obstacle is working with \( \F_{\alpha,d} \), which is a complicated group.  
If we quotient out by \( \F_{\alpha,d} \), then we do have the tools to proceed.
The downside is that the isomorphism class of this quotient does not depend on \( \alpha \).
Indeed, let \( \mathrm K_{\alpha,d} \) denote the kernel of the action of \( \HH_{\alpha,d} \) on the set of capacity \( \alpha \) elements of \( X_{\alpha,d} \). 
It can be readily verified that \( \HH_{\alpha,d} / \mathrm K_{\alpha,d} \) is isomorphic to \( \HH_{0,d} \), leading us to the investigation of \( \HH_{0,d} \). 
The difference in the case \( \alpha =0 \) is that \( \F_{0,d} \) is the group of finitely supported homeomorphisms, as \( X_{0,d} \) has exactly \( d \) accumulation points. 
In particular, the elements of \( \F_{0,d} \) are supported away from elements of maximal capacity.

\begin{MainThm5}
\label{thm:d}
Let  \( d \in \bn \).
For  \( f \in \overline \F_{0,d} \), the following are equivalent:
	\begin{enumerate}[(i)]
	
		\item \( f \) normally generates \( \overline \F_{0,d} \).
		
		\item \( f \) uniformly normally generates \( \overline F_{0,d} \).
		
		\item \( f \) induces an infinite permutation of the isolated points in any open neighborhood of an accumulation point. 
	\end{enumerate}
	
\end{MainThm5}

The actual \Cref{thm:normal generators F} stated in \Cref{sec:F0} contains another equivalent condition---related to \Cref{thm:normal generators 2}---that we omit here, as we do not have the appropriate definitions yet at hand.
The proof of \Cref{thm:normal generators F} follows from a  fragmentation lemma, \Cref{lem:factorization2}, that holds for all \( \alpha \) but that is strongest when \( \alpha =0 \), again as \( \F_{0,d} \) consists only of finitely supported homeomorphisms.  
Fragmentation will allow us to view any element of \( \overline \F_{0,d} \) as a composition of \( d+1 \) homeomorphisms, each of which can be viewed as homeomorphism of \( X_{0,1} \).  
This fragmentation also allows us to consider the abelianization of \( \overline \F_{0,d} \).

\begin{MainThm5'}
If \( d \in \bn \), then \( \overline \F_{0,d} \) is uniformly perfect and has commutator width at most four. 
\end{MainThm5'}

As consequences of \Cref{thm:ses}, \Cref{thm:perfect F}, and \Cref{thm:normal generators F}, we can compute the abelianizations of \( \PH_{0,d} \) and \( \HH_{0,d} \) as well as the minimal cardinality of normal generating sets.
A subset of a group is a \emph{normal generating set} if the elements of the set together with all their conjugates generate the group.

\begin{MainThm6}

    If \( d \in \bn\ssm\{1\} \), then:
    
    	\begin{enumerate}[(1)]
    	
    		\item The abelianization of \( \PH_{0,d} \) is isomorphic to \( \bz^{d-1} \).
    		
    		\item The abelianization of \( \HH_{0,d} \) is isomorphic to \( \bz/2\bz \times \bz/2\bz \).
    		
    	\end{enumerate}
	
\end{MainThm6}

The computation of the abelianization of \( \PH_{0,d} \) and of \( \HH_{0,d} \) implies that the minimal cardinality for a normal generating set is at least \( d-1 \) and at least two, respectively.
Our final theorem, says this is in fact an equality, and the proofs construct explicit normal generating sets of the desired cardinalities. 

\begin{MainThm7}

	If \( d \in \bn \ssm \{1\} \), then the minimal cardinality of a normal generating set for \( \PH_{0,d} \) is \( d-1 \) and is two for \( \HH_{0,d} \).

\end{MainThm7}

We do not have a good sense of whether one should expect the above theorems to hold for \( \alpha > 0 \).
A resolution in either direction would be interesting.

\subsection*{Connections to geometric topology}

As mentioned at the beginning of the introduction, the results above were motivated by recent work in the setting of homeomorphism groups of surfaces.
We present this connection here, and also contextualize our results in the surface setting. 

The end space of a non-compact manifold is an encoding of the different ways of escaping to infinity in the manifold, and this set of directions naturally inherits a topology from the manifold.
For example, \( \br \) is two-ended while \( \br^2 \) is one ended, and if a compact totally disconnected subset \( E \) of the 2-sphere \( \mathbb S^2 \) is removed from \( \mathbb S^2 \), then the end space of the resulting manifold, \( \mathbb S^2 \ssm E \), is homeomorphic to \( E \). 
The end space is a topological invariant of the manifold and hence any homeomorphism between manifolds induces a homeomorphism on the end spaces. 
Therefore, given a manifold \( M \) with end space \( E \) there is a canonical homomorphism \( \Homeo(M) \to \Homeo(E) \) given by the action of \( \Homeo(M) \) on \( E \). 

Recall that a \emph{Stone space} is a compact, zero-dimensional, Hausdorff topological space. 
If \( E \) is a second-countable Stone space, then \( E \) embeds as a subset of \( \mathbb S^2 \).
Identifying \( E \) with such an embedding, it follows from \cite{RichardsClassification} that the canonical homomorphism \( \Homeo(\mathbb S^2 \ssm E) \to \Homeo(E) \) is surjective. 
Moreover, as any two isotopic homeomorphisms induce the same action on \( E \), this homomorphism factors to give an epimorphism \( \mcg(\mathbb S^2 \ssm E) \to \Homeo(E) \), where \( \mcg(\mathbb S^2\ssm E) \) is the group of isotopy classes of homeomorphisms \( \mathbb S^2 \ssm E \to \mathbb S^2 \ssm E \), known as the \emph{mapping class group}. 
If \( E \) is infinite, the group \( \mcg(\mathbb S^2 \ssm E) \) is an example of a \emph{big} mapping class group, a class of groups that have been investigated intensely recently. 
We therefore see that theorems about big mapping class groups yield corollaries for homeomorphism groups of second-countable Stone spaces. 

Homeomorphism groups of Stone spaces have a long history of being studied from the perspective of automorphism groups of Boolean algebras, which are dual to Stone spaces by Stone duality.
The mapping class group perspective has brought a renewed interest.
The compact ordinals are Stone spaces, and the topological structure of ordinals are simpler than Stone spaces in general. 
This has led to a recent interest in homeomorphism groups of ordinals, e.g., see \cite{HernandezAmple, BestvinaClassification, BranmanGraphical}.

% Many of the theorems here are motivated by work of the last author and his collaborators on big mapping class groups, see \cite{VlamisHomeomorphism,AramayonaFirst}. 
% But with that said, we stress that our results do not follow from that setting. 
% For one, only countable  successor ordinals can be realized as the end space of a manifold (assuming the manifold is second countable; even dropping the second countability assumption on the manifold, the type of ordinals that can appear are quite limited, see \cite{FernandezEnds} for a discussion of ends in non-metrizable manifolds).

Let us finish with exploring how our work has implications for mapping class groups.
For a countable  ordinal \( \alpha \), let us identify \( X_{\alpha,1} \) with a subset of \( \mathbb S^2 \).  
Our first slate of results fail for \( \mcg(\mathbb S^2 \ssm X_{\alpha,1}) \). 
Let us explain:
Using the work of Domat--Dickmann \cite{DomatBig}, Malestein--Tao \cite{MalesteinSelf} showed that \( \mcg( \mathbb S^2 \ssm X_{0,1} ) \) surjects onto the additive group of real numbers.
Now, there is an epimorphism \( \mcg(\mathbb S^2 \ssm X_{\alpha,1}) \to \mcg(\mathbb S^2 \ssm X_{0,1}) \) obtained by ``filling in'' the ends whose capacity is less than \( \alpha \) (this is called a \emph{forgetful homomorphism} in the literature). 
It follows that \( \mcg(\mathbb S^2 \ssm X_{\alpha,1}) \) also surjects onto the reals; in particular, \( \mcg(\mathbb S^2 \ssm X_{\alpha,1}) \) is neither uniformly perfect,  finitely normally generated, strongly bounded, nor strongly distorted. 
Therefore, our results together with the work of Malestein--Tao, show that the failure of \( \Homeo(\mathbb S^2 \ssm X_{\alpha,1}) \) to be strongly bounded, uniformly perfect, and finitely normally generated, stems from a two-dimensional obstruction, i.e., it cannot be detected by the action on the end space.
It is also interesting to note that the results only fail in the category of abstract groups, meaning their analogs in the category of topological groups hold; in particular, Mann--Rafi \cite{MannLarge-scale} showed that \( \mcg(\mathbb S^2 \ssm X_{\alpha,1}) \) is coarsely bounded, and it follows from the work of the last author with Lanier \cite{LanierMapping} that \( \mcg(\mathbb S^2 \ssm X_{\alpha,1}) \) is topologically normally generated by a single mapping class (Baik \cite{BaikTopological} also showed finite topological normal generation for \( \mcg(\mathbb S^2 \ssm X_{\alpha,d}) \), where \( \alpha \) is a countable ordinal and \( d \in \bn \)).

\subsection*{Acknowledgments}
This paper stems from a vertically integrated summer research program at Queens College supported by NGV's NSF Award DMS-2212922.
NGV was also supported in part by PSC-CUNY Awards  66435-00 54 and Award 67380-00 55. 

The authors thank Kathryn Mann for the reference to Galvin's work and Ferr\'an Valdez for a discussion of his forthcoming work with collaborators on the geometry of homeomorphism groups of ordinals. 
We also thank the referee who provided a number of invaluable comments that have improved our exposition immensely. 

%-----------------
% Topology of ordinals
%-----------------

\section{Ordinals and their topology}
\label{sec:ordinals}

We provide an introduction to ordinals and their topology, aimed at non-experts. 
In the first three subsections we cover the basics of ordinals, see for instance \cite[Chapter~2]{LevyBasic}, \cite[\S4.1--2]{CiesielskiSet}, and \cite[Chapter~2]{JechSet} for the set-theoretic aspects. 

\subsection{Defining the ordinals}

A binary relation \( \leq \) on a set \( X \) is a \emph{(strict) total order} if:
	\begin{itemize}

		\item it is \emph{reflexive}: \( x \leq x \),
		\item it is \emph{transitive}: if \( x \leq y \) and \( y \leq z \) then \( x \leq z \),
		\item it is \emph{antisymmetric}: if \( x \leq y \) and \( y \leq x \) then \( x = y \), and
		\item it is \emph{strongly connected}: \( x \leq y \) or \( y \leq x \)
	\end{itemize}
for all \( x,y,z \in X \).
If, in addition, for any non-empty subset \( A \) of \( X \) there exists \( a \in A \) such that \( a \leq b \) for all \( b \in A \), then \( \leq \) is \emph{well-ordered}. 
A well-ordered set is a pair \( (X, \leq) \). 
Two well-ordered sets \( X \) and \( Y \) are \emph{order isomorphic} if there exists an order-preserving bijection \( X \to Y \). 
The strict total order \( \leq \) gives rise to a non-strict order \( < \), which is defined by \( x < y \) if and only if \( x \leq y \) and \( x \neq y \). 

\begin{Def}[Ordinal]
	An \emph{ordinal} is the order-isomorphism class of a well-ordered set.
\end{Def}

This definition has the advantage of being easy to state and convenient for introducing ordinal arithmetic.
However, in general, it will be immensely helpful to have a canonical representative of an ordinal, which we now describe. 

We will use von Neumann's definition of the ordinals.

\begin{Def}[von Neumann ordinal]
A set \( \alpha \) is an \emph{von Neumann ordinal} if it satisfies the following:
\begin{enumerate}[(i)]
	\item if \( \beta \in \alpha \), then \( \beta \subset \alpha \);
	\item if \( \beta, \gamma \in \alpha \), then \( \beta = \gamma \), \( \beta \in \gamma \), or \( \gamma \in \beta \); and
	\item if \( B \subset \alpha \) is nonempty, then there exists \( \gamma \in B \) such that \( \gamma \cap B = \varnothing \). 
\end{enumerate}
\end{Def}

% Let \( \alpha \) be an ordinal.
% Using the von Neumann representative of \( \alpha \), we make the following observations:
% \begin{itemize}
% 	\item every element of \( \alpha \) is itself an ordinal.
% 	\item if \( \beta \in \alpha \), then \( \beta \subset \alpha \).
% 	\item if \( \beta, \gamma \in \alpha \), then \( \gamma \leq \beta \) if and only if \( \gamma \subseteq \beta \), and
% 	\item if \( B \subset \alpha \) is nonempty, then there exists \( \gamma \in B \) such that \( \gamma \cap B = \varnothing \). 
% \end{itemize}

Every element of a von Neumann ordinal is itself a von Neumann ordinal.
A von Neumann ordinal can be well-ordered by set inclusion, i.e., if \( \alpha \) is a von Neumann ordinal and \( \beta, \gamma \in \alpha \), declaring \( \beta \leq \gamma \) if and only if \( \beta \subseteq \gamma \) yields a well-ordering relation on \( \alpha \).
Note that if \( \beta, \gamma \in \alpha \), then \( \gamma < \beta \) if and only if \( \gamma \in \beta \). 
In particular, if \( \beta \in \alpha \), then \( \beta = \{ \eta \in \alpha : \eta < \beta \} \). 
If \( \alpha \) and \( \beta \) are order-isomorphic von Neumann ordinals, then  \( \alpha = \beta \); it now follows from basic properties of well-ordered sets that given two von Neumann ordinals \( \alpha \) and \( \beta \), either \( \alpha = \beta \), \( \alpha \in \beta \), or \( \beta \in \alpha \). 
We can therefore compare any two distinct von Neumann ordinals, writing \( \beta < \alpha \) if and only if \( \beta \in \alpha \) (here, we have slightly abused notation, using \( < \) to denote both a relation on a given von Neumann ordinal and as a comparison of two distinct von Neumann ordinals; however, no confusion arises with this abuse).
In other words,
\begin{quote}
	\emph{The von Neumann ordinal \( \alpha \) is the set of all von Neumann ordinals less than \( \alpha \).}
\end{quote}

Now, every well-ordered set is order-isomorphic to a unique von Neumann ordinal, implying that a von Neumann ordinal is a canonical representative of its order-isomorphism class. 

\textbf{Notation.}
	Throughout the paper, we will abuse notation and conflate an ordinal with its associated von Neumann ordinal, using the word ordinal to refer to both simultaneously.
	It should clear from context whether we are using an ordinal as an isomorphism class or working directly with its von Neumann representative.
	We will write ordinals using lower-case Greek letters.

Given any ordinal \( \alpha \), the set \( \alpha \cup \{\alpha\} \) is an ordinal, called the \emph{successor of \( \alpha \)}. 
An ordinal is a \emph{successor ordinal} if it is the successor to some ordinal; otherwise, it is a \emph{limit ordinal}\footnote{We follow \cite{CiesielskiSet} in considering 0 a limit ordinal.}.
From the discussion above, any set of ordinals has a least element; this least element is necessarily the empty set and is denoted by 0.
Therefore, 0 is the first limit ordinal. 
We can now proceed to construct the natural numbers, identifying 1 with \( \{0\} \), or equivalently, \( \{\varnothing\} \), 2 with \( \{ 0, 1\} \), or equivalently, \( \{ \varnothing, \{\varnothing\} \} \), etc. 
Throughout, we will identity the natural numbers, denoted \( \bn \), with \( \bn = \{ 1, 2, \ldots \} \); in particular, \( 0 \notin \bn \). 
The natural numbers together with 0 constitute the set of finite ordinals, which itself is an ordinal, denoted \( \omega \); equivalently, \( \omega \) is the first infinite ordinal.  
Note that \( \omega \) is the first non-zero limit ordinal. 

Let us finish this subsection with recalling transfinite induction, which follows from the existence of minimal elements in well-ordered sets. 

\begin{Thm}[Transfinite induction]
	Let \( \alpha \) be an ordinal, and let \( P(\beta) \) be a property defined for all \( \beta \leq \alpha \).
	If
	\begin{enumerate}[(i)]
		\item (base case) \( P(0) \),
		\item (successor case) \( P(\beta) \) implies \( P(\beta+1) \), and 
		\item (limit case) for a limit ordinal \( \lambda \),  \( P(\beta) \) for all \( \beta < \lambda \) implies \( P(\lambda) \).
	\end{enumerate}
	then \( P(\alpha) \). 
	\qed
\end{Thm}

% The above statement is a bit awkward because we are avoiding using the languages of classes, allowing us to make a statement that is valid in ZFC.  

\subsection{Ordinal arithmetic}

We now turn to defining ordinal arithmetic, including the notions of addition, multiplication, and exponentiation. 

\begin{Def}[Ordinal addition]
\label{def:addition}
Let \( \alpha \) and \( \beta \) be ordinals. 
We define the \emph{sum} of \( \alpha \) and \( \beta \), denoted \( \alpha + \beta \), to be the order-isomorphism class of the set \( \alpha \sqcup \beta \) equipped with the total order that extends the orders on \( \alpha \) and \( \beta \) and declares the elements of \( \alpha \) to be smaller than those of \( \beta \), that is, \( \alpha \sqcup \beta \) equipped with the order \( \preceq \) given by \( \eta \preceq \gamma \) if and only if either
\begin{itemize}
	\item \( \eta, \gamma \in \alpha \) and \( \eta \leq \gamma \),
	\item \( \eta, \gamma \in \beta \) and \( \eta \leq \gamma \), or
	\item \( \eta \in \alpha \) and \( \gamma \in \beta \). 
\end{itemize}
\end{Def}

Observe that \( \alpha +1 \) is the successor of \( \alpha \) for any ordinal \( \alpha \). 
Note that addition of ordinals is not commutative, e.g., \( 1 + \omega \neq \omega +1 \), as \( 1 + \omega \) is a limit ordinal (namely \( \omega \)) and \( \omega+1 \) is a successor ordinal. 
However, ordinal addition is associative. 

\begin{Def}[Ordinal multiplication]
\label{def:multiplication}
We define the \emph{product} of \( \alpha \) and \( \beta \), denoted \( \alpha\cdot\beta \), to be the order-isomorphism class of the set \( \beta \times \alpha \) equipped with the lexicographical ordering. 
\end{Def}

As for summation, multiplication of ordinals is not commutative, e.g., \( 2 \cdot \omega \neq \omega \cdot 2 \), as the elements of \( 2 \cdot \omega \) are all finite ordinals (i.e., \( 2\cdot \omega = \omega \)) whereas \( \omega \in \omega \cdot 2 \). 

Though commutativity of addition and multiplication both fail, it is readily verified that associativity and left-distributivity hold.
We record this fact in the following lemma.

\begin{Lem}\label{lem:associative}
	Let \( \alpha, \beta \), and \( \gamma \) be ordinals. 
	Then 
		\[ (\alpha+\beta)+\gamma = \alpha + (\beta+\gamma) \]
	and
		\[ \alpha \cdot (\beta + \gamma) = \alpha \cdot \beta + \alpha \cdot \gamma .\]
	\qed
	\end{Lem}

\begin{Def}[Ordinal exponentiation]
We define the \emph{exponentiation} of \( \alpha \) by \( \beta \), denoted \( \alpha^\beta \), to be the ordinal that is order isomorphic to the set of finitely supported functions 
\[ 
	\{ f \co \beta \to \alpha : f(\eta) = 0 \text{ for all but finitely many } \eta \}
\]
equipped with the order \( \prec \) given by \( f \prec g \) if and only if there exists \( \eta \in \beta \) such that \( f(\eta) < g(\eta) \) and \( f(\gamma) = g(\gamma) \) for all \( \gamma < \eta \). 
\end{Def}

All the above ordinal arithmetic operations can be defined using transfinite recursion.
It is worth noting how this works for exponentiation: we set \( \alpha^0 = 1 \), \( \alpha^{\beta+1} = \alpha^\beta \cdot \alpha \) for all \( \beta \), and \( \alpha^\lambda = \bigcup_{\beta<\lambda} \alpha^\beta \) for a limit ordinal \( \lambda \). 
We mention this as it is important for us to know that
\begin{equation}
\label{eq:exponent}
	\omega^{\alpha+1} = \omega^\alpha \cdot \omega .
\end{equation}

We can now provide a normal form for ordinals, known as Cantor normal form:

\begin{Thm}[Cantor Normal Form]
If \( \alpha \) is a non-zero ordinal, then \( \alpha \) can be uniquely represented in the form
\begin{equation}
	\label{eq:cantor}
	\alpha = \omega^{\beta_1}\cdot k_1 + \cdots + \omega^{\beta_n}\cdot k_n,
\end{equation}
where \( n \in \bn \), \( \alpha \geq \beta_1 > \cdots > \beta_n \), and \( k_1, \ldots, k_n \in \bn \). 
\qed
\end{Thm}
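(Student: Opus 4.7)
The plan is to prove existence and uniqueness simultaneously by transfinite induction on \( \alpha \), with the ordinal division algorithm as the main technical tool. Division works for ordinals: given \( \alpha \) and \( \beta > 0 \), there exist unique ordinals \( q \) and \( r \) with \( \alpha = \beta\cdot q + r \) and \( r < \beta \)---take \( q \) to be the largest ordinal with \( \beta\cdot q \leq \alpha \) and then let \( r \) be the unique ordinal satisfying \( \alpha = \beta\cdot q + r \). Both existence facts use only well-ordering plus the observation that \( \beta\cdot(\alpha+1) > \alpha \).

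For existence, fix \( \alpha > 0 \) and let \( \beta_1 \) be the largest ordinal with \( \omega^{\beta_1} \leq \alpha \). A transfinite induction gives \( \omega^\beta \geq \beta \), so the downward-closed set \( \{\beta : \omega^\beta \leq \alpha\} \) is bounded above; its supremum lies in the set by continuity of ordinal exponentiation at limits, so \( \beta_1 \) exists, and the bound \( \alpha \geq \beta_1 \) follows from \( \omega^{\beta_1} \geq \beta_1 \). Dividing yields \( \alpha = \omega^{\beta_1}\cdot k_1 + \rho_1 \) with \( \rho_1 < \omega^{\beta_1} \); the quotient \( k_1 \) must be a positive integer, because \( \omega^{\beta_1} \leq \alpha \) forces \( k_1 \geq 1 \), while \( k_1 \geq \omega \) would give \( \omega^{\beta_1}\cdot k_1 \geq \omega^{\beta_1}\cdot \omega = \omega^{\beta_1+1} > \alpha \) by maximality of \( \beta_1 \). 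If \( \rho_1 = 0 \) we stop; otherwise the inductive hypothesis applied to \( \rho_1 < \alpha \) expands \( \rho_1 = \omega^{\beta_2}\cdot k_2 + \cdots + \omega^{\beta_n}\cdot k_n \), and \( \beta_2 < \beta_1 \) because \( \omega^{\beta_2} \leq \rho_1 < \omega^{\beta_1} \), giving the required strict decrease.

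For uniqueness, the key observation is that any expression \( \omega^{\gamma_1}\cdot c_1 + \cdots + \omega^{\gamma_m}\cdot c_m \) with \( \gamma_1 > \cdots > \gamma_m \) and \( c_i \in \bn \) satisfies \( \omega^{\gamma_1} \leq \alpha < \omega^{\gamma_1+1} \). The lower bound is clear; for the upper bound, an induction on \( m \) using \( \omega^{\gamma_i}\cdot c_i < \omega^{\gamma_i+1} \leq \omega^{\gamma_1} \) for \( i \geq 2 \) shows the tail after the leading term is strictly less than \( \omega^{\gamma_1} \), so the full sum is less than \( \omega^{\gamma_1}\cdot c_1 + \omega^{\gamma_1} = \omega^{\gamma_1}\cdot(c_1+1) \leq \omega^{\gamma_1}\cdot\omega = \omega^{\gamma_1+1} \). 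This pins down \( \gamma_1 \) as the largest \( \beta \) with \( \omega^\beta \leq \alpha \); the division algorithm then pins down \( c_1 \) and the remainder, and induction on the remainder closes the argument.

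The main obstacle is really a cluster of ordinal arithmetic facts that the proof quietly invokes---left distributivity \( \omega^\gamma\cdot(c+1) = \omega^\gamma\cdot c + \omega^\gamma \), the identity \Cref{eq:exponent}, monotonicity of exponentiation in the exponent, and continuity of exponentiation at limits. None of these is deep, but all must be in place; once they are, the inductive argument is routine bookkeeping.
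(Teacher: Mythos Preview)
Your proof is correct and follows the standard textbook argument (essentially the one in Jech). Note, however, that the paper does not actually prove this theorem: it is stated with a terminal \qed\ and no proof, as a classical result quoted from the references in \Cref{sec:ordinals}. So there is no paper proof to compare against; you have supplied one where the authors deliberately omitted it.
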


Following \cite{KieftenbeldClassification}, we call \( \beta_1 \) in \eqref{eq:cantor} the \emph{limit capacity} of \( \alpha \) and \( k_1 \) the \emph{coefficient} of \( \alpha \). 

\subsection{Topology of ordinals}

An ordinal has a canonical topology given by the order topology; for all that follows, whenever we reference the topology of an ordinal, we are referring to the order topology. 
Let \( \alpha \) be an ordinal.
A subbasis for the order topology on \( \alpha \) is given by the sets of form \( [0, \beta) = \{ \eta \in \alpha : \eta < \beta \} \) and \( (\beta, \alpha) = \{ \eta \in\alpha : \beta < \eta \} \), where \( \beta \in \alpha \). 

Now, let \( \lambda \in \alpha \) be a limit ordinal. 
Note that if \( \beta < \lambda \) then \( \beta+1 < \lambda \), and observe that if \( \lambda \in \alpha \) is a limit ordinal, then \( [0, \lambda) = \bigcup_{\eta < \lambda} [0, \eta+1) \).
It follows that the sets of the form \( [0, \beta+1) = [0,\beta] \) and \( (\beta, \alpha)=[\beta+1, \alpha) \) with \( \beta \in \alpha \) form a subbasis for the order topology. 
In particular, the order topology admits a basis consisting of clopen sets, i.e., it is \emph{zero-dimensional}.
Moreover, it is an exercise to check that sets of the form \( [\beta, \gamma] \) are compact and that if \( \lambda \) is a limit ordinal then \( [0, \lambda) \) fails to be compact. 
We summarize this in the following proposition. 

\begin{Prop}
An ordinal is a zero-dimensional Hausdorff topological space.
Moreover, a non-zero ordinal is compact if and only if it is a successor ordinal. 
\qed
\end{Prop}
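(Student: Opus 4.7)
The proposition summarizes observations made in the preceding paragraphs, so the plan is to organize those into a clean proof. For zero-dimensionality, I would note that every subbasic open set $[0,\beta+1)$ (with $\beta+1 \leq \alpha$) equals the complement of $(\beta,\alpha)$, hence is clopen, and symmetrically $(\beta,\alpha)$ is clopen as the complement of $[0,\beta+1)$. Finite intersections of these sets produce a basis consisting of clopen sets, yielding zero-dimensionality. For Hausdorff, given two distinct points $\beta < \gamma$ in $\alpha$, I would separate them with $[0,\beta+1)$ and $(\beta,\alpha)$; the former is valid because $\beta+1 \leq \gamma < \alpha$ ensures $\beta+1 \in \alpha$, and the two sets are visibly disjoint.

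For the compactness characterization, I would treat the two directions separately. If $\alpha$ is a non-zero limit ordinal, the collection $\{[0,\eta+1) : \eta \in \alpha\}$ is an open cover of $\alpha$, since for every $\eta \in \alpha$ one has $\eta+1 \in \alpha$ (using that $\alpha$ is a limit), yet any finite subfamily is contained in some $[0,\eta_0+1)$, a proper subset of $\alpha$. Hence $\alpha$ is not compact.

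For the converse, suppose $\alpha = \beta+1$ is a successor. Fix an open cover $\mathcal{U}$ of $\alpha$ and let
\[
    S = \{ \eta \leq \beta : [0,\eta] \text{ admits a finite subcover from } \mathcal{U} \}.
\]
I would prove $\beta \in S$ by transfinite induction. The base case $0 \in S$ is immediate, and the successor step $\eta \in S \Rightarrow \eta+1 \in S$ is handled by adjoining any $U \in \mathcal{U}$ containing $\eta+1$. For the limit step, given $\lambda \leq \beta$ limit with every $\eta < \lambda$ in $S$, pick $U \in \mathcal{U}$ containing $\lambda$; the subbasis structure provides some $\gamma < \lambda$ with $(\gamma,\lambda] \subseteq U$, and since $\lambda$ is a limit one has $\gamma+1 < \lambda$, so a finite subcover of $[0,\gamma+1]$ exists by the inductive hypothesis. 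Adjoining $U$ to it covers $[0,\lambda]$, giving $\lambda \in S$. Thus $\beta \in S$, and $\alpha = [0,\beta]$ is compact.

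The only genuinely non-routine step is the limit case in the transfinite induction, which hinges on the observation that any open neighborhood of a limit ordinal contains a final segment of the ordinals below it; the other ingredients are bookkeeping with the explicit subbasis.
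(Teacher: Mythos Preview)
Your proposal is correct and follows precisely the approach the paper sketches in the paragraph preceding the proposition: the clopen subbasis $\{[0,\beta+1),(\beta,\alpha)\}$ gives zero-dimensionality (and Hausdorffness), while compactness of successors and non-compactness of limits are left there as an exercise, which you fill in via the standard transfinite induction and the obvious cover $\{[0,\eta+1)\}_{\eta\in\alpha}$. The paper itself offers no further proof beyond the \qed, so your write-up is exactly the intended elaboration.
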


% For the remainder of the article, we will focus on successor ordinals, and hence, the compact ordinals.
The next goal of this subsection is to classify the successor ordinals up to homeomorphism, which is accomplished in \Cref{thm:classification}.
In the process, we establish several preliminary results and introduce the Cantor--Bendixson derivative.

\begin{Rem}
A topological classification of ordinals, both successor and limit, is given by Kieftenbeld--L{\"o}we in \cite{KieftenbeldClassification}, a report for the Institute for Logic, Language, and Computation.
We reproduce a complete argument for the classification of the compact ordinals here as it is instructive and convenient for the reader.
\end{Rem}

\begin{Lem}
\label{lem:commutative}
If \( \alpha \) and \( \beta \) are successor ordinals, then \( \alpha + \beta \) is homeomorphic to the topological disjoint union of \( \alpha \) and \( \beta \).
In particular, \( \alpha + \beta \) and \( \beta + \alpha \) are homeomorphic. 
\end{Lem}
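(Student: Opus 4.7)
\medskip

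\noindent\textbf{Proof plan.} Since $\alpha$ is a successor ordinal, write $\alpha = \alpha'+1$, so that $\alpha'$ is the maximum element of $\alpha$ (viewed as an ordinal, and hence as an initial segment of $\alpha+\beta$ under Definition~\ref{def:addition}). The plan is to show that $\alpha$, identified with $[0,\alpha']\subset \alpha+\beta$, is a clopen subspace of $\alpha+\beta$ whose subspace topology is the order topology on $\alpha$, and that its complement is clopen and order-homeomorphic to $\beta$. The homeomorphism with the disjoint union then follows from the standard fact that a space which is the union of two disjoint clopen pieces is the topological disjoint union of those pieces, and the commutativity assertion is immediate since the disjoint union is manifestly symmetric in $\alpha$ and $\beta$.

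First I would observe that initial segments $[0,\gamma)$ of $\alpha+\beta$ are open by definition of the order topology. In particular $[0,\alpha)$ is open. Because $\alpha = \alpha'+1$ is a successor, we moreover have
\[
  [0,\alpha) \;=\; \{\eta \in \alpha+\beta : \eta < \alpha'+1\} \;=\; [0,\alpha'],
\]
so this set has a maximum and is therefore closed: its complement is $(\alpha',\alpha+\beta)$, which is a subbasic open set. Thus $\alpha$ is clopen in $\alpha+\beta$. The same argument, using the fact that $\beta$ is a successor (and hence that the bijective order-isomorphism $\beta \to [\alpha,\alpha+\beta)$ given by $\eta\mapsto \alpha+\eta$ sends the maximum of $\beta$ to the maximum of $\alpha+\beta$), shows that $[\alpha,\alpha+\beta)$ is also clopen.

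Next I would check that on each piece the subspace topology coincides with the intrinsic order topology. This is a general fact for convex subsets of a linearly ordered space: subbasic open rays in the subspace topology are intersections of subbasic open rays of $\alpha+\beta$ with the convex subset, and conversely. For $\alpha = [0,\alpha']$ this is immediate; for the final segment $[\alpha,\alpha+\beta)$, the order-isomorphism $\eta\mapsto \alpha+\eta$ between $\beta$ and $[\alpha,\alpha+\beta)$ is then a homeomorphism for the order topologies, hence also between $\beta$ and $[\alpha,\alpha+\beta)$ with its subspace topology inherited from $\alpha+\beta$.

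The main obstacle, modest as it is, lies in cleanly pinning down that the subspace topology on a convex set in an ordered space agrees with the order topology on the set, and in noticing that the successor hypothesis on $\alpha$ is what promotes the open initial segment $[0,\alpha)$ to a closed one. Once this is in hand, we conclude that $\alpha+\beta$ is the disjoint union of two clopen subspaces homeomorphic to $\alpha$ and $\beta$, respectively; hence $\alpha+\beta$ is homeomorphic to $\alpha\sqcup \beta$. The second assertion, that $\alpha+\beta$ and $\beta+\alpha$ are homeomorphic, follows by applying the same reasoning to $\beta+\alpha$ and noting that $\alpha\sqcup\beta$ and $\beta\sqcup\alpha$ are the same topological space.
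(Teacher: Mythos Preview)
Your proof is correct and follows essentially the same approach as the paper: both show that the copy of \(\alpha\) inside \(\alpha+\beta\) is a clopen subset (open as an initial segment, closed for a reason depending on \(\alpha\) being a successor), conclude the complementary final segment is clopen and homeomorphic to \(\beta\), and deduce the disjoint-union decomposition. The only cosmetic difference is that the paper obtains closedness of \(\alpha\) via compactness (successor ordinals are compact, and compact subsets of Hausdorff spaces are closed), whereas you argue it directly from \([0,\alpha)=[0,\alpha']\) having an open complementary ray; your more explicit treatment of the subspace-versus-order topology on the convex pieces is a point the paper leaves implicit.
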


\begin{proof}
By definition, the ordinal \( \alpha + \beta \) is represented by the disjoint union of \( \alpha \) and \( \beta \) with the order \( \preceq \) given in Definition~\ref{def:addition}.
In the space \( \alpha + \beta \), \( \beta \) is closed and \( \alpha \) is open. 
As \( \alpha \) is compact, it must also be closed.
Therefore, both \( \alpha \) and \( \beta \) are clopen subsets of \( \alpha + \beta \), implying \( \alpha + \beta \) is topologically the disjoint union of \( \alpha \) and \( \beta \).
Similarly, \( \beta + \alpha \) is homeomorphic to the topological disjoint union of \( \alpha \) and \( \beta \), implying that \( \alpha + \beta \) and \( \beta + \alpha \) are homeomorphic. 
\end{proof}

\begin{Lem}
\label{lem:sum-equal}
Let \( \alpha \) and \( \beta \) be ordinals. 
If the limit capacity of \( \beta \) is strictly less than that of \( \alpha \), then \( \beta + \alpha = \alpha \). 
\end{Lem}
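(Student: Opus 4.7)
The plan is to reduce the lemma to an absorption identity for pure powers of $\omega$: namely, that $\beta + \omega^\gamma = \omega^\gamma$ whenever $\gamma \geq 1$ and $\beta < \omega^\gamma$. Granting this identity, the lemma follows quickly from Cantor normal form and associativity of ordinal addition, and the main work is to establish the absorption identity itself.

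For the reduction, I would write $\alpha = \omega^\gamma \cdot k + \rho$ in Cantor normal form, where $\gamma$ is the limit capacity of $\alpha$, $k \in \bn$ is its coefficient, and $\rho < \omega^\gamma$ collects the lower-order terms. If the limit capacity $\delta$ of $\beta$ satisfies $\delta < \gamma$, then every exponent appearing in the Cantor normal form of $\beta$ is at most $\delta$, so $\beta < \omega^{\delta+1} \leq \omega^\gamma$. Since $\omega^\gamma \cdot k = \omega^\gamma + \omega^\gamma \cdot (k-1)$ for finite $k$, associativity then yields
\[
\beta + \alpha = (\beta + \omega^\gamma) + \omega^\gamma \cdot (k-1) + \rho = \omega^\gamma + \omega^\gamma \cdot (k-1) + \rho = \alpha.
\]

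To prove the absorption identity, I would induct on $\gamma \geq 1$. For $\gamma = 1$, $\beta$ is finite, and the order-type description of addition (see \Cref{def:addition}) shows directly that prepending finitely many points to a copy of $\omega$ does not change the order type, giving $\beta + \omega = \omega$. For a successor $\gamma = \eta + 1$, use $\omega^\gamma = \omega^\eta \cdot \omega$, so $\omega^\gamma$ is the supremum of the ordinals $\omega^\eta \cdot n$ for $n \in \omega$; any $\beta < \omega^\gamma$ is dominated by some $\omega^\eta \cdot n_0$, and the inductive hypothesis at exponent $\eta$ (together with associativity applied to successive $\omega^\eta$-blocks) forces the partial sums $\beta + \omega^\eta \cdot n$ to stabilize to $\omega^\eta \cdot n$ once $n \geq n_0$, whose supremum over $n$ is $\omega^\gamma$. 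For limit $\gamma$, $\omega^\gamma = \sup_{\eta < \gamma}\omega^\eta$ and $\beta < \omega^\eta$ for some $\eta < \gamma$, so an entirely analogous supremum argument combined with the inductive hypothesis at smaller exponents closes the case.

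The main obstacle I expect is the successor step of the induction: one must absorb $\beta$ into the first partial sum $\omega^\eta \cdot n_0$ that dominates it, and then argue cleanly that the remaining supremum over $n \geq n_0$ still equals $\omega^{\eta+1}$. This can be handled either by constructing an explicit order-isomorphism $\beta \sqcup \omega^\gamma \to \omega^\gamma$, in the spirit of \Cref{def:addition}, or by invoking the standard transfinite recursion for ordinal addition at limits; in either case, the care needed is to avoid circularity with the indecomposability properties of $\omega^\gamma$ that one is effectively proving.
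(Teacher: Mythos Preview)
Your approach is correct but takes a different route from the paper. You prove the full additive indecomposability of $\omega^\gamma$ by transfinite induction and then absorb $\beta$ into the leading term of $\alpha$'s Cantor normal form. The paper instead sidesteps indecomposability entirely: it observes that the limit-capacity hypothesis forces $\beta\cdot\omega \leq \alpha$, decomposes $\alpha = \beta\cdot\omega + \gamma$ for some ordinal $\gamma$, and then uses the single elementary identity $\beta + \beta\cdot\omega = \beta\cdot\omega$ (which is just $\beta\cdot(1+\omega)=\beta\cdot\omega$, i.e., left-distributivity applied to $1+\omega=\omega$) together with associativity. This avoids any induction and any appeal to Cantor normal form for $\alpha$. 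Your route proves a stronger standalone fact (indecomposability of $\omega^\gamma$), which is useful elsewhere, but it costs you the three-case induction you flag as the main obstacle---and indeed your successor step as written is slightly off: $\beta+\omega^\eta\cdot n$ does not stabilize to $\omega^\eta\cdot n$ (take $\beta=\omega^\eta$, $n_0=2$), though the supremum is still $\omega^{\eta+1}$, so the argument survives once you phrase it via suprema rather than stabilization. The paper's decomposition $\alpha=\beta\cdot\omega+\gamma$ is the cleaner trick here.
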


\begin{proof}
As the limit capacity of \( \beta \) is strictly less than \( \alpha \), it follows that \( \beta \cdot \omega \leq \alpha \), as \( \beta + k < \alpha \) for all \( k \in \omega \). 
Let \( \gamma \) be the ordinal of  the interval \( (\beta\cdot \omega, \alpha) \) in \( \alpha \).
Then \( \alpha = \beta \cdot \omega + \gamma \). 
Using associativity and distributivity (\Cref{lem:associative}), we have:
	\[ \beta + \alpha = \beta + (\beta\cdot \omega + \gamma) = (\beta + \beta\cdot \omega)+\gamma = \beta\cdot(1+\omega)+\gamma = \beta\cdot \omega + \gamma = \alpha \]
as desired.
\end{proof}

\begin{Prop}
\label{prop:reverse}
Let \( \alpha \) be a successor ordinal. 
If \( \beta \) is the limit capacity of \( \alpha \) and \( k \) is its coefficient, then \( \alpha \) is homeomorphic to \( \omega^\beta \cdot k + 1 \). 
\end{Prop}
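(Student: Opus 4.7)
The plan is to combine the Cantor normal form of $\alpha$ with the commutativity lemma \Cref{lem:commutative} and the absorption lemma \Cref{lem:sum-equal} to rearrange the ordinal sum into the desired form topologically. The proposition should be read as applying to an infinite successor ordinal (for a finite ordinal $\alpha = k$ the asserted homeomorphism $k \cong k+1$ fails on cardinality grounds, and a limit ordinal is not compact), so I would assume $\alpha$ is such and, using Cantor normal form, write $\alpha = \omega^\beta \cdot k + \gamma$ where $\gamma \geq 1$ is a successor ordinal whose limit capacity is strictly less than $\beta$.

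The key move is to absorb the $+1$ into the first summand. If $\gamma = 1$ the conclusion is tautological, so I would suppose $\gamma \geq 2$ and let $\delta$ be the unique ordinal satisfying $1 + \delta = \gamma$: explicitly, $\delta = \gamma$ when $\gamma$ is infinite and $\delta = \gamma - 1$ when $\gamma$ is a finite ordinal at least two. In either case $\delta$ is a nonzero successor ordinal whose limit capacity is still strictly less than $\beta$, and by associativity of ordinal addition,
\[
    \alpha \;=\; \omega^\beta \cdot k + (1 + \delta) \;=\; (\omega^\beta \cdot k + 1) + \delta,
\]
which expresses $\alpha$ as a sum of two successor ordinals.

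Since both $\omega^\beta \cdot k + 1$ and $\delta$ are successor ordinals, \Cref{lem:commutative} yields the topological equivalence $(\omega^\beta \cdot k + 1) + \delta \cong \delta + (\omega^\beta \cdot k + 1)$. As the limit capacity of $\delta$ is strictly less than that of $\omega^\beta \cdot k$, \Cref{lem:sum-equal} gives $\delta + \omega^\beta \cdot k = \omega^\beta \cdot k$ as ordinals, whence $\delta + (\omega^\beta \cdot k + 1) = \omega^\beta \cdot k + 1$. Chaining these identifications produces the desired homeomorphism $\alpha \cong \omega^\beta \cdot k + 1$. No step looks like a genuine obstacle; the only mild subtlety is the case split when defining $\delta$, since $1 + \gamma = \gamma$ only when $\gamma$ is infinite, and the trick of absorbing the $+1$ into the first summand is precisely what makes both pieces of the decomposition successor ordinals so that \Cref{lem:commutative} can be applied.
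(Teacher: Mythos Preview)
Your proof is correct and follows essentially the same route as the paper: decompose $\alpha$ as $(\omega^\beta\cdot k + 1) + \delta$ with $\delta$ a successor of smaller limit capacity, apply \Cref{lem:commutative} to swap the summands topologically, and then invoke \Cref{lem:sum-equal} to absorb $\delta$. The paper simply asserts the decomposition $\alpha = (\omega^\beta\cdot k + 1) + \gamma$ directly, whereas you justify it explicitly via the $1+\delta=\gamma$ step; you also flag the implicit hypothesis that $\alpha$ be an infinite successor ordinal, which the paper's proof assumes without stating.
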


\begin{proof}
As \( \alpha \) is a successor, the Cantor normal form of \( \alpha \) is given by 
\[
	\alpha = \omega^{\beta_1}\cdot k_1 + \cdots + \omega^{\beta_n} \cdot k_n + 1,
\]
where \( \beta_1 = \beta \) and \( k_1 = k \). 
We can therefore write
\[
	\alpha = \left(\omega^{\beta}\cdot k + 1 \right) + \gamma,
\]
where \( \gamma \) is a successor ordinal. 
By \Cref{lem:commutative}, \( \alpha \) is homeomorphic to \( \gamma + \left(\omega^{\beta} \cdot k + 1\right) \). 
As the limit capacity of \( \gamma \) is strictly less than that of \( \omega^{\beta} \cdot k + 1 \), \Cref{lem:sum-equal} implies that \( \gamma + \left(\omega^{\beta} \cdot k + 1\right) = \omega^{\beta} \cdot k + 1 \), and hence \( \alpha \) is homeomorphic to \( \omega^{\beta}\cdot k + 1 \). 
\end{proof}

To classify the compact ordinals, we need to show that the limit capacity and the coefficient of a successor ordinal are topological invariants. 
For this, we need the Cantor--Bendixson derivative. 

\begin{Def}[Cantor--Bendixson derivative]
Let \( X \) be a topological space.
The \emph{derived set of \( X \)}, denoted \( X' \), is the subset of \( X \) consisting of all the accumulation points in \( X \). 
Given an ordinal \( \alpha \), the \( \alpha^{\text{th}} \)-Cantor--Bendixson derivative of \( X \), denoted \( X^{(\alpha)} \), is defined via transfinite recursion as follows:
\begin{itemize}
	\item \( X^{(0)} = X \),
	\item \( X^{(\alpha+1)} = \left(X^{(\alpha)}\right)' \), and
	\item if \( \lambda \) is a limit ordinal, then \( X^{(\lambda)} = \bigcap_{\beta < \lambda} X^{(\beta)} \). 
\end{itemize}
\end{Def}

Next, we introduce the Cantor--Bendixson rank and degree.
% We will not make the most general definition, rather focusing on a definition that suits our purposes.

\begin{Def}[Cantor--Bendixson rank]
Let \( X \) be a topological space. 
The \emph{Cantor--Bendixson rank} of \( X \) is the least ordinal \( \alpha \) such that \( X^{(\alpha)} \) is perfect. 
\end{Def}

As the intersection of nonempty nested closed sets is nonempty in a compact space, if \( X \) is a compact space whose Cantor--Bendixson derivatives stabilize in the empty set, then its Cantor--Bendixson rank is necessarily a successor ordinal and the last non-empty Cantor--Bendixson derivative has finite cardinality, allowing us to define the Cantor--Bendixson degree.

\begin{Def}[Cantor--Bendixson degree]
	Let \( X \) be a compact topological space whose Cantor--Bendixson derivatives are eventually empty.
	If \( \alpha + 1 \) is the Cantor--Bendixson rank of \( X \), then the \emph{Cantor--Bendixson degree} of \( X \) is the cardinality of \( X^{(\alpha)} \). 
%
% The \emph{Cantor--Bendixson rank of \( X \)}  is the least ordinal \( \alpha \) such that \( X^{(\alpha)} \) is empty, if such an ordinal exists; otherwise, it is set to be \( \infty \). 
% If \( X \) is compact and its Cantor--Bendixson degree is an ordinal \( \alpha \), then \( \alpha \) is necessarily a successor ordinal, allowing us to define the \emph{Cantor--Bendixson degree of \( X \)} to be the cardinality of \( X^{(\alpha-1)} \), which is finite. 
\end{Def}

A successor ordinal is compact, non-perfect, and its Cantor--Bendixson derivatives are eventually empty.
Therefore, we can investigate both is Cantor--Bendixson rank and degree.
Using the Cantor normal form and transfinite induction, it is an exercise to show the following:

\begin{Prop}
\label{prop:forward}
The Cantor--Bendixson rank of a successor ordinal is equal to the successor of its limit capacity and its Cantor--Bendixson degree is equal to its coefficient.
\qed
\end{Prop}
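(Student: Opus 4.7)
The plan is to reduce to a normal form and then carry out a transfinite induction. By \Cref{prop:reverse}, the given ordinal $\alpha$ is homeomorphic to $\omega^\beta \cdot k + 1$, where $\beta$ is the limit capacity of $\alpha$ and $k$ its coefficient. Since Cantor--Bendixson rank and degree are topological invariants, it suffices to establish the proposition for $\omega^\beta \cdot k + 1$. I would first show that $\omega^\beta \cdot k + 1$ is homeomorphic to the disjoint union of $k$ copies of $\omega^\beta + 1$ by partitioning into the clopen intervals $A_1 = [0, \omega^\beta]$ and $A_j = (\omega^\beta \cdot (j-1), \omega^\beta \cdot j]$ for $2 \leq j \leq k$; each $A_j$ has order type $\omega^\beta + 1$ using the identity $1 + \omega^\beta = \omega^\beta$ for $\beta \geq 1$. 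Since Cantor--Bendixson derivatives distribute across clopen disjoint unions, this reduces matters to computing the rank of $\omega^\beta + 1$ and identifying its final non-empty derivative.

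The main induction would be on $\beta$, proving that $(\omega^\beta + 1)^{(\beta)} = \{\omega^\beta\}$ for $\beta \geq 1$, with the finite case $\beta = 0$ (a discrete two-point space) handled directly. The essential tool is the compatibility of derivatives with clopen restriction: $U^{(\gamma)} = X^{(\gamma)} \cap U$ whenever $U$ is clopen in $X$. The base case $\beta = 1$ is immediate since $\omega$ is the unique accumulation point of $\omega + 1$. For the successor step from $\beta$ to $\beta + 1$, each truncation $[0, \omega^\beta \cdot n]$ is clopen in $\omega^{\beta + 1} + 1 = \omega^\beta \cdot \omega + 1$, and by the inductive hypothesis together with the $n$-fold decomposition, its $\beta$-th derivative equals $\{\omega^\beta \cdot j : 1 \leq j \leq n\}$; taking the union over $n$ and adjoining $\omega^{\beta + 1}$ gives $(\omega^{\beta + 1} + 1)^{(\beta)} = \{\omega^\beta \cdot n : n \in \bn\} \cup \{\omega^{\beta + 1}\}$, which collapses under one more derivative to $\{\omega^{\beta + 1}\}$, since the $\omega$-sequence $\omega^\beta \cdot n$ has $\omega^{\beta + 1}$ as its unique accumulation point.

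The hard part will be the limit step. For a limit ordinal $\lambda$, two things must be checked: that $\omega^\lambda$ survives every stage below $\lambda$, and that no other point of $\omega^\lambda + 1$ does. For the second, every $\delta < \omega^\lambda$ lies in the clopen initial segment $[0, \omega^\beta]$ for some $\beta < \lambda$, and by the inductive hypothesis the $(\beta + 1)$-th derivative of this subspace is empty, eliminating $\delta$ well before stage $\lambda$. For the first, I would run an inner transfinite induction on $\gamma < \lambda$ showing $\omega^\lambda \in (\omega^\lambda + 1)^{(\gamma)}$: at successor stages, the cofinal sequence $\{\omega^\beta : \gamma < \beta < \lambda\}$ lies in $(\omega^\lambda + 1)^{(\gamma)}$ by the outer inductive hypothesis and clopen compatibility, and it converges to $\omega^\lambda$; limit stages are handled by the definition of derivative at limits as an intersection. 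This gives $(\omega^\lambda + 1)^{(\lambda)} = \{\omega^\lambda\}$ and hence rank $\lambda + 1$. Combined with the disjoint union decomposition, this yields rank $\beta + 1$ and degree $k$ for $\omega^\beta \cdot k + 1$, as required.
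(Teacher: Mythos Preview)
The paper does not actually prove this proposition; it is stated as an exercise with the hint ``using the Cantor normal form and transfinite induction,'' and your proposal carries out precisely that program---the reduction via \Cref{prop:reverse}, the clopen splitting of $\omega^\beta\cdot k+1$ into $k$ copies of $\omega^\beta+1$, and the transfinite induction on $\beta$ (with the limit step handled by clopen compatibility and the cofinal sequence $\omega^\beta\to\omega^\lambda$) are all correct. One caveat worth noting: the paper's convention sets $X^{(0)}=X'$ rather than the standard $X^{(0)}=X$, which shifts the count by one (e.g.\ it gives $(\omega+1)^{(0)}=\{\omega\}$ and rank $1$, not $2$); your computation implicitly uses the standard convention, which is the one under which the proposition holds as stated, so this is a slip in the paper's definition rather than in your argument.
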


The classification of successor ordinals up to homeomorphism now follows immediately from \Cref{prop:reverse} and \Cref{prop:forward}.

\begin{Thm}[Topological classification of successor ordinals]
\label{thm:classification}
Two successor ordinals are homeomorphic if and only if their limit capacities and their coefficients agree, or equivalently, their Cantor--Bendixson ranks and degrees agree. 
\qed
\end{Thm}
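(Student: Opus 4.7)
The plan is to prove both directions by directly combining \Cref{prop:reverse} and \Cref{prop:forward}, noting that the two equivalent formulations in the theorem statement (limit capacity/coefficient agreement versus Cantor--Bendixson rank/degree agreement) are linked by \Cref{prop:forward}, so I only need to verify one of them.

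First I would handle the ``if'' direction. Suppose two successor ordinals \( \alpha_1 \) and \( \alpha_2 \) have the same limit capacity \( \beta \) and the same coefficient \( k \). By \Cref{prop:reverse}, each of \( \alpha_1 \) and \( \alpha_2 \) is homeomorphic to \( \omega^\beta \cdot k + 1 \), and hence they are homeomorphic to one another.

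Next I would handle the ``only if'' direction. The Cantor--Bendixson derivative is defined purely topologically, so the Cantor--Bendixson rank and degree are topological invariants. Thus if \( \alpha_1 \) and \( \alpha_2 \) are homeomorphic, their Cantor--Bendixson ranks and degrees agree. By \Cref{prop:forward}, the rank determines the successor of the limit capacity and the degree equals the coefficient, so the limit capacities and coefficients of \( \alpha_1 \) and \( \alpha_2 \) agree. This simultaneously establishes the equivalence of the two conditions stated in the theorem.

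There is essentially no obstacle here: both propositions have been stated and the theorem is a formal consequence. The only mild subtlety is making sure to invoke the invariance of the Cantor--Bendixson rank/degree under homeomorphism, which is immediate from the fact that the derived set operation is defined in terms of accumulation points and is therefore preserved by homeomorphisms (and transfinite recursion preserves this property at every stage).
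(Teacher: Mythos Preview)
Your proposal is correct and matches the paper's approach exactly: the paper states that the classification follows immediately from \Cref{prop:reverse} and \Cref{prop:forward} and marks the theorem with a \qed, giving no further argument. Your explicit unpacking of both directions, together with the observation that the Cantor--Bendixson rank and degree are topological invariants, is precisely the content the paper leaves implicit.
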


Next, we establish a stability property for neighborhoods of elements of ordinals.
First, we need to extend the notion of Cantor--Bendixson rank to elements of ordinals.

\begin{Def}[Rank of a point]
Let \( X \) be a topological space of Cantor--Bendixson rank \( \alpha \) satisfying \( X^{(\alpha)} = \varnothing \). 
For \( x \in X \), we define the \emph{rank of \( x \)} to be the least ordinal \( \beta \) such that \( x \notin X^{(\beta)} \). 
\end{Def}

Observe that the rank of a point is necessarily a successor ordinal. 
By \Cref{prop:forward}, the rank of an element in an ordinal is the successor to its capacity. 
Despite discussing the capacity of a point in the introduction, we will generally discuss the rank going forward, as it is topologically defined. 

We can use the classification of successor ordinals to show that each element of an ordinal is \emph{stable} in the sense introduced by Mann--Rafi \cite{MannLarge-scale}.

\begin{Def}[Stable point]
Let \( X \) be a zero-dimensional topological space.
A point \( x \in X \) is \emph{stable} if it admits a neighborhood basis consisting of pairwise homeomorphic clopen sets. 
\end{Def}

\begin{Lem}
\label{lem:stable}
Let \( \alpha \) be an ordinal,  let \( b \in \alpha +1 \), let \( \beta+1 \) be the rank of \( b \), and let \( U \) be a clopen neighborhood of \( b \) in \( \alpha + 1 \). 
If the rank of every element of \( U \ssm \{b\} \) has rank strictly less than \( \beta + 1 \), then \( U \) is homeomorphic to \( \omega^{\beta} + 1 \). 
\end{Lem}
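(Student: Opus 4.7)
The plan is to compute the Cantor--Bendixson rank and degree of $U$, realize $U$ as a compact ordinal, and then conclude via the classification of successor ordinals (\Cref{thm:classification}).

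First I would establish that Cantor--Bendixson derivatives are local with respect to open subsets: since $U$ is open in $\alpha+1$, a transfinite induction on $\gamma$ gives $U^{(\gamma)} = U \cap (\alpha+1)^{(\gamma)}$ for every ordinal $\gamma$, with the successor and limit stages both handled routinely. Consequently, the rank of any point of $U$ computed intrinsically in $U$ equals its rank in $\alpha+1$. The hypothesis then translates to $U^{(\beta)} = \{b\}$ and $U^{(\beta+1)} = \{b\}' = \emptyset$, so $U$ has Cantor--Bendixson rank $\beta+1$ and Cantor--Bendixson degree $1$.

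Next I would show that $U$ is homeomorphic to a successor ordinal. The induced order on $U \subset \alpha+1$ is a well-order, so $U$ is order-isomorphic to a unique ordinal $\delta$. Because $U$ is clopen, the order topology on $U$ with respect to the induced order coincides with the subspace topology on $U$, and $U$ being compact forces $\delta$ to be a successor ordinal. By \Cref{prop:forward} applied to $\delta$, its limit capacity is $\beta$ and its coefficient is $1$, so by \Cref{thm:classification} we obtain $U \cong \omega^\beta + 1$.

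The main obstacle I expect is the verification that the subspace and order topologies on $U$ agree --- this requires genuine use of both openness of $U$ (to guarantee that order-open intervals in $U$ realize a basis for the subspace topology, so that no new limit points appear in the induced order) and closedness (so that suprema of subsets of $U$ taken in $\alpha+1$ remain in $U$, matching the order-topological structure of $\delta$). Once this identification is in place, the computation of Cantor--Bendixson invariants and the appeal to the classification theorem are direct.
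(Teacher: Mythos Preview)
Your proposal is correct and follows essentially the same approach as the paper: compute the Cantor--Bendixson rank and degree of \(U\), identify \(U\) with a successor ordinal via the induced well-order, and invoke the classification theorem. The paper's proof is considerably terser---it simply observes that \(U\) contains elements of every rank \(\eta \le \beta\), asserts that \(U\) is (order-)isomorphic to an ordinal of the right Cantor--Bendixson invariants, and applies \Cref{thm:classification}---whereas you make explicit the two points the paper leaves implicit: the locality of Cantor--Bendixson derivatives for open subsets, and the coincidence of the subspace and order topologies on a clopen \(U\). Your added care is warranted and does not constitute a different route.
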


\begin{proof}
Observe that if \( \eta \leq \beta \), then \( U \) contains an element of rank \( \eta \). 
It follows that, as \( U \) is a well-ordered set, \( U \) must be isomorphic to an ordinal of Cantor--Bendixson rank \( \beta + 1 \) and Cantor--Bendixson degree 1. 
Therefore, by \Cref{thm:classification}, \( U \) homeomorphic to \( \omega^\beta + 1 \). 
\end{proof}

Note that all sufficiently small neighborhoods of \( b \) in \( \alpha \) satisfy the hypothesis of \Cref{lem:stable}, allowing us to conclude that \( b \) is stable. 

\begin{Prop}
Every element of an ordinal is stable. 
\qed
\end{Prop}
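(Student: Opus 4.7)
The plan is to reduce to Lemma~\ref{lem:stable} via a clean choice of neighborhood basis. Let $\gamma$ be an ordinal and $b \in \gamma$. If $b$ is isolated, then $\{b\}$ is itself a clopen neighborhood of $b$, and the constant family $\{\{b\}\}$ is trivially a neighborhood basis consisting of pairwise homeomorphic clopen sets. Otherwise $b$ is a limit ordinal in $\gamma$, and I would work with the standard clopen neighborhood basis at $b$, namely the intervals $(c, b]$ for $c < b$.

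Before invoking the lemma, I would first verify that the rank $\zeta$ of $b$ is a successor ordinal. Indeed, if $\zeta$ were a limit ordinal, then by definition of rank $b \in \gamma^{(\eta)}$ for every $\eta < \zeta$; since $\gamma^{(\zeta)} = \bigcap_{\eta < \zeta} \gamma^{(\eta)}$, this would force $b \in \gamma^{(\zeta)}$, contradicting $\zeta$ being the rank of $b$. So write $\zeta = \beta + 1$. Next, since $b \notin \gamma^{(\beta+1)} = (\gamma^{(\beta)})'$, the point $b$ is not an accumulation point of $\gamma^{(\beta)}$, so there exists some $c_0 < b$ such that $(c_0, b] \cap \gamma^{(\beta)} = \{b\}$. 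Consequently, for every $c$ with $c_0 \leq c < b$, the clopen interval $(c, b]$ is a neighborhood of $b$ in which every other element lies outside $\gamma^{(\beta)}$, and hence has rank strictly less than $\beta + 1$.

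Applying Lemma~\ref{lem:stable} to each such $(c, b]$, viewed as a clopen neighborhood of $b$ inside the successor ordinal $b + 1 \subseteq \gamma$, yields that each $(c, b]$ is homeomorphic to $\omega^\beta + 1$. Here I use the fact that Cantor--Bendixson derivatives are local: for any clopen $V \subseteq \gamma$ one has $V^{(\eta)} = V \cap \gamma^{(\eta)}$, so ranks of points in $b+1$ agree with their ranks in $\gamma$, and the hypothesis of the lemma remains in force after restriction. This exhibits a clopen neighborhood basis of $b$ consisting of pairwise homeomorphic sets, proving stability.

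The main obstacle is really just the two small bookkeeping observations, that the rank is necessarily a successor ordinal and that rank is preserved under passage to clopen subsets; once these are in place, the conclusion is an immediate application of Lemma~\ref{lem:stable}.
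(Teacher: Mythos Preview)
Your proposal is correct and follows essentially the same approach as the paper: the paper's entire argument is the single sentence preceding the proposition, namely that all sufficiently small clopen neighborhoods of \( b \) satisfy the hypothesis of \Cref{lem:stable}. You have simply filled in the details the paper leaves implicit---separating out the isolated case, verifying that the rank of a non-isolated point is a successor, and noting that ranks are preserved on passage to clopen subsets---all of which are routine and handled correctly.
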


As already noted, \Cref{lem:stable} says that all sufficiently small clopen neighborhoods of \( b \) in \( \alpha \) are homeomorphic. 
It will be useful to have a name for such sets.

\begin{Def}[Stable neighborhood]
For an ordinal \( \alpha \) and \( b \in \alpha + 1 \), a clopen neighborhood \( U \) of \( b \) in \( \alpha + 1 \) is \emph{stable} if \( b \) is the unique highest rank element in \( U \).
\end{Def}

By \Cref{lem:stable}, a stable neighborhood of a rank \( \beta + 1 \) element of \( \alpha + 1 \) is homeomorphic to \( \omega^\beta + 1 \).

Our theorems deal with the successor ordinals whose limit capacities are successor ordinals, that is, ordinals that are homeomorphic to \( \omega^{\alpha + 1}\cdot d + 1 \). 
For the remainder of this section, we focus on \( d =1 \), introducing a convenient coordinate system.
For any ordinal \( \alpha \), the successor of \( \omega^\alpha \) is \( \omega^\alpha \cup \{ \omega^\alpha \}  = \omega^\alpha + 1 \); topologically, this says that \( \omega^\alpha + 1 \) is the one-point compactification of \( \omega^\alpha \). 
Moreover, \( \omega^{\alpha} \) is the unique rank \( \alpha + 1 \) element of \( \omega^{\alpha} + 1 \), which means that every homeomorphism of \( \omega^\alpha +1 \) fixes \( \omega^\alpha \), which yields the following.

\begin{Lem}
\label{lem:isomorphism}
For any ordinal \( \alpha \),  \( \Homeo(\omega^\alpha + 1) \) is isomorphic to \( \Homeo(\omega^\alpha) \).
\qed 
\end{Lem}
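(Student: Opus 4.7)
The plan is to exploit two facts: $\omega^\alpha$ is the unique element of $\omega^\alpha+1$ of maximal Cantor--Bendixson rank (so it must be fixed by every self-homeomorphism), and $\omega^\alpha+1$ is the one-point compactification of $\omega^\alpha$ (so every self-homeomorphism of $\omega^\alpha$ extends uniquely to $\omega^\alpha+1$). For concreteness I assume $\alpha\geq 1$, so $\omega^\alpha$ is a limit ordinal.

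First I would introduce the restriction map $R\co \Homeo(\omega^\alpha+1)\to \Homeo(\omega^\alpha)$ defined by $R(f)=f|_{\omega^\alpha}$. By \Cref{prop:forward} the space $\omega^\alpha+1$ has Cantor--Bendixson rank $\alpha+1$ and degree $1$, so $(\omega^\alpha+1)^{(\alpha)}=\{\omega^\alpha\}$. Since the rank of a point is a topological invariant, every $f\in\Homeo(\omega^\alpha+1)$ fixes $\omega^\alpha$, and so restricts to a self-homeomorphism of $(\omega^\alpha+1)\ssm\{\omega^\alpha\}=\omega^\alpha$. The map $R$ is plainly a group homomorphism, and injectivity is immediate because an element of $\Homeo(\omega^\alpha+1)$ is determined by its values on the dense subset $\omega^\alpha$.

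The main content of the proof is surjectivity. Given $g\in\Homeo(\omega^\alpha)$, I would extend $g$ to $\hat g\co\omega^\alpha+1\to\omega^\alpha+1$ by declaring $\hat g(\omega^\alpha)=\omega^\alpha$. Continuity of $\hat g$ on $\omega^\alpha$ is inherited from $g$; the real step is continuity at $\omega^\alpha$. A basic neighborhood of $\omega^\alpha$ has the form $(\beta,\omega^\alpha]$, whose complement $[0,\beta]$ in $\omega^\alpha$ is compact. Because $g$ is a homeomorphism of the non-compact Hausdorff space $\omega^\alpha$, it is proper, so $g^{-1}([0,\beta])$ is compact; any compact subset of the limit ordinal $\omega^\alpha$ is bounded, say by some $\gamma<\omega^\alpha$, whence $\hat g^{-1}((\beta,\omega^\alpha])\supseteq(\gamma,\omega^\alpha]$. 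The same argument applied to $g^{-1}$ shows $\hat g\in\Homeo(\omega^\alpha+1)$, and $R(\hat g)=g$ by construction.

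The only subtle point is the continuity of $\hat g$ at $\omega^\alpha$, which reduces to the observation that every compact subset of the limit ordinal $\omega^\alpha$ is bounded above. Everything else is a direct application of the topological invariance of Cantor--Bendixson rank and \Cref{thm:classification}.
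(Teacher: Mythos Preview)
Your proof is correct and follows exactly the approach the paper sketches: the paper simply notes that \( \omega^\alpha+1 \) is the one-point compactification of \( \omega^\alpha \) and that \( \omega^\alpha \) is the unique rank \( \alpha+1 \) element (hence fixed by every self-homeomorphism), then records the lemma with a \qed. You have supplied the details the paper omits, in particular the surjectivity argument via properness and boundedness of compacta in a limit ordinal. Your explicit restriction to \( \alpha\geq 1 \) is in fact more careful than the paper's statement, since for \( \alpha=0 \) one has \( \Homeo(2)\cong\bz/2\bz \) while \( \Homeo(1) \) is trivial; the paper only ever applies the lemma with exponent \( \alpha+1\geq 1 \), so this is harmless in context.
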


In the next section, where we focus on Cantor--Bendixson degree one ordinals, the prior lemma allows us to focus on \( \omega^\alpha \) rather than \( \omega^{\alpha} + 1 \). 
The advantage of this viewpoint is the following proposition.

\begin{Prop}\label{prop:coordinates}
	Let \( \alpha \) be an ordinal. 
	If \( X \) is a countable discrete space, then the spaces \( \omega^{\alpha+1} \) and \( X \times (\omega^{\alpha} +1 ) \) are homeomorphic. 
\end{Prop}

\begin{proof}
For each \( k \in \omega \), \( \omega^\alpha \cdot k +1 \in \omega^{\alpha +1} \). 
Now, for \( \eta \in \omega^{\alpha +1} \), there exists \( k \in \bn \) such that \( \eta \in \omega^\alpha\cdot k + 1 \).
It follows that
\begin{align*}
	\omega^{\alpha+1}	&= \bigcup_{k\in\bn} [0,\omega^\alpha\cdot k] \\
										&= \{0\} \cup \bigcup_{k\in\omega} [\omega^\alpha\cdot k + 1, \omega^\alpha \cdot (k+1)] \\
					&=  \bigcup_{k\in \omega} I_k,
\end{align*}
where we let \( I_0 = [0, \omega^\alpha] \) and \( I_k = [\omega^\alpha\cdot k + 1, \omega^\alpha \cdot (k+1)] \) for \( k \in \bn \).
Now, for distinct \( j,k \in \omega \),  \( I_k \) is clopen and \( I_k \cap I_j = \varnothing \).
Therefore, \( \omega^{\alpha+1} \) is homeomorphic to the topological disjoint union of the \( I_k \).
To finish, observe that each of the \( I_k \) is order isomorphic to, and hence homeomorphic to, \( \omega^\alpha + 1 \), implying that \( \omega^{\alpha+1} \) is homeomorphic to \( \omega \times (\omega^\alpha + 1) \).
As as \( \omega \) is countable and discrete, it is homeomorphic to \( X \), implying \( \omega \times (\omega^\alpha + 1) \), and hence \( \omega^{\alpha+1} \), is homeomorphic to \( X \times (\omega^\alpha+1) \). 
\end{proof}

\subsection{Topology of homeomorphism groups}

Though our main focus is on homeomorphism groups as abstract groups, a key subgroup in what follows is defined by taking a closure in an appropriate topology on the homeomorphism group of an ordinal; moreover, \Cref{thm:ses} is strengthened by making a statement in the category of topological groups, rather than that of  abstract groups.  

Let us recall that a \emph{topological group} \( G \) is an abstract group equipped with a Hausdorff topology such that the multiplication map \( G \times G \to G \) given by \( (g,h) \mapsto gh \)  and the inversion map  \( G \to G \) given by \( g \mapsto g^{-1} \) are continuous.

\begin{Def}[Permutation topology]
For an ordinal \( \alpha \), the \emph{permutation topology} on the group \( \Homeo(\alpha+1) \) is generated by the sets of the form \( U(A,B) = \{ h \in \Homeo(\alpha) : h(A) = B \} \), where \( A \) and \( B \) are clopen subsets of \( \alpha +1 \). 
\end{Def}

The permutation topology is motivated by the model theory perspective and arises from the identification, via Stone duality, of \( \Homeo(\alpha+1) \) with the automorphism group of the Boolean algebra given by the clopen subsets of \( \alpha+1 \). 
It is an exercise to show that that the permutation topology agrees with the \emph{compact-open topology}, that is, the topology generated by sets of the form \( U(K,W) = \{h\in \Homeo(\alpha + 1) : h(K) \subset W \} \), where \( K \) is compact and \( W \) is open in \( \alpha  + 1 \). 
A theorem of Arens \cite{ArensTopologies}---which the reader may treat as an exercise---tells us that the homeomorphism group of a compact Hausdorff space equipped with the compact-open topology is a topological group.
From this discussion, we record the following.

\begin{Prop}
For any ordinal \( \alpha \), the permutation topology and the compact-open topology on \( \Homeo(\alpha+1) \) agree and \( \Homeo(\alpha+1) \) equipped with the compact-open topology, equivalently the permutation topology, is a topological group. 
\qed
\end{Prop}

%-----------------
% CB rank one
%-----------------

\section{Cantor--Bendixson degree one}

Throughout this section, we study \( \Homeo(\omega^{\alpha+1}) \) instead of \( \Homeo(\omega^{\alpha+1}+1) \), but recall they are isomorphic by \Cref{lem:isomorphism}. 

Before beginning, let us recall two standard definitions.
Let \( X \) be a topological space. 
A family of subsets \( \{ Y_n \}_{n\in\bn} \) of \( X \) is \emph{locally finite} if, given any compact set \( K \), the cardinality of the set \( \{ n\in \bn : K \cap Y_n \neq \varnothing \} \) is finite. 
The \emph{support} of a homeomorphism \( f \co X \to X \) is the closure of the set \( \{  x \in X : f(x) \neq x \} \). 
Given a sequence \( \{ f_n \}_{n\in\bn} \) of homeomorphisms \( X \to X \) whose supports form a locally finite family of pairwise-disjoint sets, the infinite product of the \( f _n \), denoted \( \prod_{n\in\bn} f_n \), is the homeomorphism that agrees with \( f_n \) on the support of \( f_n \) for each \( n \in \bn \) and restricts to the identity elsewhere. 
In the compact-open topology on \( \Homeo(X) \), the infinite product can be realized as the limit of the finite products, that is, \( \prod_{n\in\bn} f_n = \lim_{n \to \infty} \prod_{k=1}^n f_k \).

\subsection{Topological moieties}

A moiety in the natural numbers refers to a subset that has the same cardinality as its complement.  
We extend the definition to ordinals of the form \( \omega^{\alpha+1} \). 

\begin{Def}
A subset \( A \) of \( \omega^{\alpha+1} \) is a \emph{topological moiety} if \( A \) is clopen and both \( A \) and \( \omega^{\alpha+1} \ssm A \) contain infinitely many rank \( \alpha +1 \) points. 
\end{Def}

We will require several properties of moieties.  

\begin{Prop}
\label{prop:moieties same}
Let \( \alpha \) be an ordinal.
Every topological moiety of \( \omega^{\alpha+1} \) is homeomorphic to \( \omega^{\alpha+1} \).
\end{Prop}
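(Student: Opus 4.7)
The plan is to exhibit the moiety $A$ as a clopen disjoint union of countably many copies of $\omega^\alpha + 1$; once this is accomplished, \Cref{prop:coordinates} immediately identifies $A$ with $\omega^{\alpha+1}$. The cut points for this decomposition will be the rank $\alpha+1$ points of $A$ itself, which are infinite by the moiety hypothesis and---because every rank $\alpha+1$ point of $\omega^{\alpha+1}$ has the form $\omega^\alpha \cdot k$ with $k \in \bn$---are automatically cofinal in $\omega^{\alpha+1}$.

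Concretely, I would enumerate the rank $\alpha+1$ points of $A$ in increasing order as $b_1 < b_2 < \cdots$ and set $W_1 = A \cap [0, b_1]$ and $W_i = A \cap (b_{i-1}, b_i]$ for $i \geq 2$. Each of $[0, b_1]$ and $(b_{i-1}, b_i]$ is clopen in $\omega^{\alpha+1}$, so each $W_i$ is clopen there; moreover $W_i$ is compact since it is closed in the compact ordinal $b_i + 1$. By construction of the enumeration, no other rank $\alpha+1$ point of $\omega^{\alpha+1}$ lying strictly between $b_{i-1}$ and $b_i$ is in $A$, so $b_i$ is the unique element of $W_i$ of rank $\alpha+1$. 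Viewing $W_i$ as a clopen subset of the successor ordinal $b_i + 1$ and applying \Cref{lem:stable} with $b = b_i$ (of rank $\alpha+1$), one concludes $W_i \cong \omega^\alpha + 1$.

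Cofinality of the $b_i$ in $\omega^{\alpha+1}$ then guarantees that $\{W_i\}_{i \in \bn}$ is a clopen partition of $A$; as each $W_i$ is clopen in $A$ and the partition is countable, $A$ is homeomorphic to the topological disjoint union $\bigsqcup_{i \in \bn} (\omega^\alpha + 1)$, i.e., to $\bn \times (\omega^\alpha + 1)$, which by \Cref{prop:coordinates} is homeomorphic to $\omega^{\alpha+1}$. The only point requiring care is the invocation of \Cref{lem:stable}, whose hypothesis refers to a clopen neighborhood in a specified successor ordinal; the fact that Cantor--Bendixson rank is preserved when passing between a clopen subspace and its ambient space lets one move freely between $W_i$, the compact ordinal $b_i + 1$, and $\omega^{\alpha+1}$, so the rank hypothesis on $W_i \setminus \{b_i\}$ can be checked in whichever space is most convenient.
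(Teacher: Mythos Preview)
Your proof is correct and follows essentially the same approach as the paper: enumerate the rank $\alpha+1$ points of $A$ in increasing order, cut $A$ into the resulting clopen intervals, identify each piece with $\omega^\alpha+1$, and then invoke \Cref{prop:coordinates}. The only cosmetic difference is that the paper appeals directly to the classification theorem (\Cref{thm:classification}) for each piece, whereas you route through \Cref{lem:stable}; since the latter is itself proved via the classification, the two arguments are equivalent.
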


\begin{proof}
Let \( A \) be a moiety of \( \omega^{\alpha+1} \).
Then \( A \), equipped with the order inherited from \( \omega^{\alpha+1} \), is a well-ordered set, and hence order isomorphic to an ordinal. 
Let \( \{a_n\}_{n\in\bn} \) be an enumeration of the maximal rank elements of \( A \) such that \( a_n < a_m \) whenever \( n < m \). 
Let \( U_1 = [0, a_1] \cap A \), and let \( U_n = [a_{n-1}+1, a_n] \cap A \) for \( n > 1 \). 
Then \( \{U_n\}_{n\in\bn} \) is a collection of pairwise-disjoint clopen subsets satisfying \( A = \bigcup_{n\in\bn} U_n \), implying that \( A \) is homeomorphic to the topological disjoint union of the \( A_n \). 
Now, by the classification of successor ordinals (\Cref{thm:classification}), each of the \( U_n \) is homeomorphic to \( \omega^\alpha + 1 \); therefore, \( A \) is homeomorphic to \( \bn \times (\omega^\alpha+1) \), which is in turn homeomorphic to \( \omega^{\alpha+1} \) by \Cref{prop:coordinates}. 
\end{proof}

\Cref{prop:moieties same} yields the following ``change of coordinates principle''. 

\begin{Lem}
\label{lem:change of coordinates}
Let \( \alpha \) be an ordinal.
If \( A \) and \( B \) are topological moieties, then there exists \( \sigma \) in \( \Homeo(\omega^{\alpha+1}) \) such that \( \sigma(A) = B \).
Moreover, if \( A \cap B = \varnothing \), then \( \sigma \) can be chosen to be an involution supported in \( A \cup B \). 
\end{Lem}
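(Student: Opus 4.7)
The plan is to prove both statements by leveraging \Cref{prop:moieties same} to find homeomorphisms between pieces, and then gluing along clopen partitions.

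First, I observe that the definition of a topological moiety is symmetric: if \( A \) is a topological moiety of \( \omega^{\alpha+1} \), then so is its complement \( A^c := \omega^{\alpha+1} \ssm A \), since \( A \) is clopen precisely when \( A^c \) is, and the condition on infinitely many maximal rank elements is symmetric between \( A \) and \( A^c \). Applying \Cref{prop:moieties same}, each of \( A, A^c, B, B^c \) is homeomorphic to \( \omega^{\alpha+1} \), and in particular they are pairwise homeomorphic. Let \( \sigma_1 \co A \to B \) and \( \sigma_2 \co A^c \to B^c \) be homeomorphisms.

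For the first assertion, I would define \( \sigma \co \omega^{\alpha+1} \to \omega^{\alpha+1} \) by \( \sigma|_A = \sigma_1 \) and \( \sigma|_{A^c} = \sigma_2 \). Since \( \{A, A^c\} \) and \( \{B, B^c\} \) are clopen partitions of \( \omega^{\alpha+1} \), and \( \sigma \) is continuous and a bijection restricted to each piece of a finite clopen partition, it is a homeomorphism. By construction, \( \sigma(A) = B \).

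For the moreover statement, assume \( A \cap B = \varnothing \). Let \( \sigma_1 \co A \to B \) be a homeomorphism as above (which exists by \Cref{prop:moieties same}). Now define \( \sigma \) by
\[
    \sigma(x) = \begin{cases} \sigma_1(x) & \text{if } x \in A, \\ \sigma_1^{-1}(x) & \text{if } x \in B, \\ x & \text{otherwise.} \end{cases}
\]
Since \( A \), \( B \), and \( \omega^{\alpha+1}\ssm(A\cup B) \) are pairwise-disjoint clopen sets whose union is \( \omega^{\alpha+1} \), the map \( \sigma \) is a well-defined homeomorphism. A direct check shows \( \sigma \circ \sigma \) is the identity, so \( \sigma \) is an involution; it fixes every point outside \( A \cup B \), so its support is contained in \( A \cup B \); and \( \sigma(A) = B \) by construction.

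There is no real obstacle here: the whole argument reduces to the observation that moieties (and their complements) are homeomorphic to the ambient space by \Cref{prop:moieties same}, combined with the elementary fact that homeomorphisms of pieces of a finite clopen partition can be glued. The only point to be a bit careful about is recognizing that the complement of a moiety is again a moiety, so that the gluing in the first part produces a homeomorphism of the whole space.
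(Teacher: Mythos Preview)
Your proof is correct and follows essentially the same approach as the paper: both arguments use \Cref{prop:moieties same} to obtain homeomorphisms \(A\to B\) and \(A^c\to B^c\), glue them along the clopen partition for the first statement, and in the disjoint case define the involution via \(\sigma_1\) on \(A\), \(\sigma_1^{-1}\) on \(B\), and the identity elsewhere.
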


\begin{proof}
By \Cref{prop:moieties same}, there exists a homeomorphism \( f \co A \to B \). 
Let \( C \) and \( D \) the complements of \( A \) and \( B \), respectively.
As the complement of a topological moiety is a topological moiety, there exists a homeomorphism \( g\co C \to D \).
Define \( \sigma \in \Homeo(\omega^{\alpha+1}) \) such that \( \sigma|_A = f \) and \( \sigma|_C = g \).
Then \( \sigma(A) = B \). 

Now suppose that \( A \cap B = \varnothing \).
Define \( \iota \in \Homeo(\omega^{\alpha+1}) \) by 
\[
	\iota(x) = \left\{
		\begin{array}{ll}
			f(x)		& \text{if } x \in A \\
			f^{-1}(x) 	& \text{if } x \in B \\
			x		& \text{otherwise}
		\end{array}
	\right.
\]
Then \( \iota \) is an involution supported in \( A\cup B \) mapping \( A \) onto \( B \). 
\end{proof}

To finish this subsection, we need to establish the existence of a homeomorphism that will translate a topological moiety off of itself. 

\begin{Def}
Given a topological moiety \( A \) in \( \omega^{\alpha+1} \), we call \( \vp \in \Homeo(\omega^{\alpha+1}) \) an \( A \)-translation if \( \vp^n(A) \cap \vp^m(A) = \varnothing \) for all \( n,m \in \bz \). 
If, in addition, \( \{ \vp^n(A) \}_{n\in\bz} \) is locally finite, then we say that \( \vp \) is a \emph{convergent \( A \)-translation}. 
\end{Def}

\begin{Lem}
\label{lem:translation}
Let \( \alpha \) be an ordinal.
If \( A \) is a topological moiety in \( \omega^{\alpha+1} \), then there exists a convergent \( A \)-translation in \( \vp \in \Homeo(\omega^{\alpha+1}) \). 
Moreover, \( \vp \) can be chosen to be supported in a topological moiety. 
\end{Lem}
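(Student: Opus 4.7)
My plan is to use \Cref{lem:change of coordinates} to reduce the problem to a single favorable instance of $A$ built from the coordinate decomposition of \Cref{prop:coordinates}, and then construct $\vp$ explicitly as an index-shift of clopen moiety-sized blocks, extended by the identity off a preselected moiety.

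For the setup, I would begin with the decomposition $\omega^{\alpha+1}=\bigsqcup_{k\in\bn}I_k$ supplied by \Cref{prop:coordinates}, where each $I_k\cong\omega^\alpha+1$ is clopen, and fix a partition of $\bn$ into countably many infinite subsets $\{T_n\}_{n\in\bz\cup\{\infty\}}$. Setting $A_n := \bigsqcup_{k\in T_n}I_k$, each $A_n$ is clopen and, via another application of \Cref{prop:coordinates}, is homeomorphic to $\bn\times(\omega^\alpha+1)\cong\omega^{\alpha+1}$; in particular every $A_n$ is a topological moiety. Letting $M := \bigsqcup_{n\in\bz}A_n$, the set $M$ is a clopen topological moiety with clopen complement $A_\infty$. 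Since \Cref{lem:change of coordinates} supplies some $\sigma\in\Homeo(\omega^{\alpha+1})$ carrying $A_0$ to the given moiety $A$, the $\sigma$-conjugate of any $A_0$-translation supported in $M$ is an $A$-translation supported in the moiety $\sigma(M)$, and it suffices to treat $A_0$.

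To construct $\vp$, I would use \Cref{prop:moieties same} to pick homeomorphisms $f_n\co A_n\to A_{n+1}$ for each $n\in\bz$ and define $\vp|_{A_n}=f_n$ and $\vp|_{A_\infty}=\mathrm{id}$. The family $\{A_n\}_{n\in\bz}\cup\{A_\infty\}$ is a pairwise disjoint open cover of $\omega^{\alpha+1}$ by clopens on each of which $\vp$ is continuous, so $\vp$ is a homeomorphism. A straightforward induction gives $\vp^n(A_0)=A_n$, so the iterates are pairwise disjoint, and any compact $K\subset\omega^{\alpha+1}$ meets $M$ in a compact subset that the open cover $\{A_n\}_{n\in\bz}$ of $M$ must cover by finitely many members, so the family $\{\vp^n(A_0)\}_{n\in\bz}$ is locally finite. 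The support of $\vp$ lies in the moiety $M$, and so the lemma will follow after conjugation by $\sigma$.

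The step I expect to be the main obstacle is engineering the partition of $M$ into clopen moieties $\{A_n\}_{n\in\bz}$ of the ambient $\omega^{\alpha+1}$: once this is in hand, continuity of $\vp$ is automatic from the piecewise definition, and local finiteness of $\{\vp^n(A_0)\}$ is automatic from the compact-set-meets-open-cover argument. Routing everything through the coordinate decomposition of \Cref{prop:coordinates} is what makes both conditions available simultaneously, and the reduction to a favorable $A_0$ via \Cref{lem:change of coordinates} is what relieves me of having to fit the arbitrary given moiety $A$ into the partition directly.
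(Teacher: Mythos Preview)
Your proposal is correct and follows essentially the same strategy as the paper: reduce via \Cref{lem:change of coordinates} to a model moiety built from the coordinate decomposition of \Cref{prop:coordinates}, then define an explicit block-shift supported in a complementary moiety. The paper packages the same idea slightly more compactly by writing \(\omega^{\alpha+1}\cong\bz\times\bn^2\times(\omega^\alpha+1)\) and shifting the \(\bz\)-coordinate on the slice \(\bz\times\{1\}\times\bn\times(\omega^\alpha+1)\), but this is just your partition \(\{A_n\}_{n\in\bz}\cup\{A_\infty\}\) in different notation.
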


\begin{proof}
By \Cref{prop:coordinates}, \( \omega^{\alpha+1} \) is homeomorphic to \( \bn \times (\omega^\alpha +1) \), which in turn is homeomorphic to \( \bz \times \bn^2 \times (\omega^\alpha +1) \).
Consider the subset \( A' \) of the latter space given by \( A' = \{ 0 \} \times \{ 1 \} \times \bn \times (\omega^\alpha+1) \).
Define \( \tau' \) by \( \tau'(\ell, 1,n,x) = (\ell+1,1, n, x) \) and \( \tau'(\ell, m ,n , x) = ( \ell, m ,n , x) \) whenever \( m > 1\). 
Fix a homeomorphism \( \psi \co  \bz \times \bn^2 \times (\omega^\alpha +1) \to  \omega^{\alpha+1} \). 
Then \( \psi(A') \) is a topological moiety, and by \Cref{lem:change of coordinates}, we can assume that \( \psi(A') = A \).
It follows that \( \vp = \psi \circ \tau'\circ \psi^{-1} \) is a convergent \( A \)-translation.
Moreover, as \( \tau' \) is supported in \( \bz \times \{1 \} \times \bn \times (\omega^\alpha + 1) \), we have that \( \tau \) is supported in a topological moiety. 
\end{proof}

\subsection{Galvin's lemma}

The key behind all the proofs in the remainder of the section is a uniform fragmentation result, i.e., given two topological moieties whose union is a topological moiety, we provide a way of writing any homeomorphism as a composition of three homeomorphisms each of which is supported in one of the two given moieties. 
We call this uniform fragmentation result Galvin's lemma, as it is an extension of Galvin's \cite[Lemma~2.1]{GalvinGenerating} from the setting of permutation groups to \( \Homeo(\omega^{\alpha+1}) \).
Given a topological moiety \( A \) of \( \omega^{\alpha+1} \), we let \[ F_A = \{ f \in \Homeo(\omega^{\alpha+1}) : f(a) = a \text{ for all } a \in A\}. \]

\begin{Prop}[Galvin's lemma for ordinals]
\label{prop:galvin}
Let \( \alpha \) be an ordinal, and let \( A \) and \( B \) be disjoint topological moieties in \( \omega^{\alpha+1} \). 
If \( A \cup B \) is a topological moiety, then \[ \Homeo(\omega^{\alpha+1}) = F_AF_BF_A \cup F_BF_AF_B. \] 
\end{Prop}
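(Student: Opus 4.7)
The plan is to adapt Galvin's argument for symmetric groups~\cite{GalvinGenerating} to the topological setting, substituting ``topological moiety'' for ``infinite subset'' and using \Cref{lem:change of coordinates} to promote bijections to homeomorphisms. Set \( C := \omega^{\alpha+1} \ssm (A \cup B) \), which is a topological moiety by hypothesis, giving a tripartition \( \omega^{\alpha+1} = A \sqcup B \sqcup C \).

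Given \( h \in \Homeo(\omega^{\alpha+1}) \), my first step is a dichotomy: I would show that at least one of \( (B \cup C) \cap h^{-1}(B \cup C) \) or \( (A \cup C) \cap h^{-1}(A \cup C) \) contains infinitely many rank-\((\alpha+1)\) points. This is a pigeonhole argument: if both had only finitely many such points, then for each pair \( \{X,Y\} \neq \{A,B\} \) with \( X, Y \in \{A,B,C\} \), the set \( h(X) \cap Y \) would have only finitely many rank-\((\alpha+1)\) points, which forces \( h^{-1}(C) \)---itself a topological moiety---to have only finitely many rank-\((\alpha+1)\) points in each of \( A \), \( B \), and \( C \), contradicting that it has infinitely many overall. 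The two cases lead, respectively, to \( F_A F_B F_A \) and \( F_B F_A F_B \) decompositions; by symmetry, I describe only the first.

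Next, I would construct \( f_3 \in F_A \). Choose a clopen subset \( B^\flat \subset (B \cup C) \cap h^{-1}(B \cup C) \) with infinitely many rank-\((\alpha+1)\) points such that both \( (B \cup C) \ssm B^\flat \) and \( (B \cup C) \ssm h(B^\flat) \) also have infinitely many rank-\((\alpha+1)\) points. This is arranged by partitioning the rank-\((\alpha+1)\) points of \( (B \cup C) \cap h^{-1}(B \cup C) \) into two infinite subsets and taking a clopen union of pairwise-disjoint stable neighborhoods of the first subset to serve as \( B^\flat \). Then \( B \) and \( B^\flat \) are both topological moieties of the moiety \( B \cup C \) (which is homeomorphic to \( \omega^{\alpha+1} \) by \Cref{prop:moieties same}), so \Cref{lem:change of coordinates} applied inside \( B \cup C \) yields a homeomorphism \( \phi \) of \( B \cup C \) sending \( B^\flat \) to \( B \). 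Extending \( \phi \) by the identity on \( A \) produces \( f_3 \in F_A \) with \( f_3^{-1}(B) = B^\flat \), so in particular \( h f_3^{-1}(B) = h(B^\flat) \subset B \cup C \).

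Finally, I would define \( f_1 \in F_A \) directly and recover \( f_2 \in F_B \) as a quotient. Define \( f_1 \) piecewise: the identity on \( A \); \( h \circ f_3^{-1} \) on \( B \); and an arbitrary homeomorphism \( C \to (B \cup C) \ssm h(B^\flat) \) on \( C \) (existing since both source and target are topological moieties, hence each homeomorphic to \( \omega^{\alpha+1} \) by \Cref{prop:moieties same}). Then \( f_1 \in F_A \), and \( f_2 := f_1^{-1} \circ h \circ f_3^{-1} \) fixes \( B \) pointwise by construction, placing \( f_2 \in F_B \). Hence \( h = f_1 f_2 f_3 \in F_A F_B F_A \). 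The hard part is the dichotomy in the first step, together with the technical subtlety of arranging \( B^\flat \) to enjoy \emph{both} the global moiety property in \( \omega^{\alpha+1} \) and the relative moiety property in \( B \cup C \) simultaneously; once these are in place, the assembly of the \( f_i \)'s is a routine application of the change-of-coordinates principle.
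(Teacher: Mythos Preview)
Your approach is essentially the same as the paper's---both adapt Galvin's pigeonhole dichotomy and then assemble the three factors via the change-of-coordinates lemma. The paper's dichotomy is phrased as ``$C\setminus h(A)$ or $C\setminus h(B)$ is a moiety,'' and it builds the factors in a slightly different order, but the strategy is identical.

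There is one technical oversight worth fixing. You assert that the union of pairwise-disjoint stable neighborhoods of the points in $P_1$ is clopen, but an arbitrary such union need not be closed. For example, in $\omega^2$, taking $U_n = \{n-1\} \cup (\omega\cdot(n-1),\,\omega\cdot n]$ for $n\ge 2$ gives pairwise-disjoint stable neighborhoods whose union $\bn\cup(\omega,\omega^2)$ omits its limit point $\omega$. The fix is immediate: the set $D := (B\cup C)\cap h^{-1}(B\cup C)$ is itself a clopen moiety, hence homeomorphic to $\omega^{\alpha+1}$ by \Cref{prop:moieties same}, so you can split $D = D_1\sqcup D_2$ into two sub-moieties and take $B^\flat = D_1$. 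This guarantees clopenness directly and still yields both conditions you need, since $h(D_2)\subset (B\cup C)\setminus h(B^\flat)$ supplies infinitely many rank-$(\alpha+1)$ points there. With this adjustment your argument is complete.
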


\begin{proof}
Fix \( h \in \Homeo(\omega^{\alpha +1}) \). 
Let \( C \) denote the complement of \( A \sqcup B \). 
As \[ C = (C \ssm h(A)) \cup (C \ssm h(B)), \] at least one of \( C \ssm h(A) \) or \( C \ssm h(B) \) is a topological moiety. 
First, suppose that \( C \ssm h(A) \) is a topological moiety. 
We can then write \( C = M_1 \cup M_2 \), where \( M_1 \) and \( M_2 \) are disjoint topological moieties and \( h(A) \cap C \subset M_1 \). 
By \Cref{lem:change of coordinates}, it is possible to choose \( f_1 \in F_A \) such that \( f_1(B \cup M_1) = C \) and \( f_1(M_2) = B \). 

Observe that \( (f_1\circ h)(A) \) is contained in \( A \cup C \) and is disjoint from a moiety contained in \( C \). 
It follows that there exists \( f_2 \in F_B \) such that \( f_2(f_1(h(A))) = A \). 
Therefore, \( f_2 \circ f_1 \circ h = f_3 \circ f_4 \), where \( f_3 \) is supported in \( A \) and \( f_4 \) is supported in the complement of \( A \). 
So, 
\[ 
	h = f_1^{-1} \circ (f_2^{-1} \circ f_3) \circ f_4,
\]
with \( f_1^{-1}, f_4 \in F_A \) and \( f_2^{-1}, f_3 \in F_B \), implying \( h \in F_A F_B F_A \). 

As noted above, if \( C \ssm h(A) \) is not a topological moiety, then \( C \ssm h(B) \) must be a topological moiety.
In this case, a similar argument establishes that \( h \in F_B F_A F_B \). 
\end{proof}

\subsection{Normal generation and uniform perfectness}

We now turn to (1) showing that \( \Homeo(\omega^{\alpha+1}) \) is a uniformly perfect group and (2) classifying the normal generators of of \( \Homeo(\omega^{\alpha+1}) \).  

Let us recall several definitions. 
A group \( G \) is \emph{perfect} if it is equal to its commutator subgroup, i.e., the subgroup generated by the set \( \{ [x,y] : x,y \in G \} \), where \( [x,y] = xyx^{-1}y^{-1} \); it is \emph{uniformly perfect} if there exists some \( k \in \bn \) such that every element of \( G \) can be expressed as a product of \( k \) commutators, and the minimum such \( k \) is called the \emph{commutator width} of \( G \).  

\begin{Lem}
\label{lem:commutator}
Let \( \alpha \) be an ordinal, and let \( h \in \Homeo(\omega^{\alpha+1}) \).
If \( h \) is supported in a topological moiety, then \( h \) can be expressed as a single commutator. 
\end{Lem}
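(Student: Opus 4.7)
The plan is to apply the classical infinite commutator trick (sometimes called Mather's trick), adapted here via the convergent translations produced by \Cref{lem:translation}. Let $A$ be a topological moiety in $\omega^{\alpha+1}$ that contains the support of $h$. By \Cref{lem:translation}, there exists a convergent $A$-translation $\vp\in\Homeo(\omega^{\alpha+1})$, meaning the sets $\{\vp^n(A)\}_{n\in\bz}$ are pairwise disjoint and form a locally finite family.

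For each $n\in\omega$, set $h_n = \vp^n h \vp^{-n}$. Since $h$ is supported in $A$, the conjugate $h_n$ is supported in $\vp^n(A)$, so the supports of $\{h_n\}_{n\in\omega}$ are pairwise disjoint and locally finite. First I would invoke the discussion at the start of the section to conclude that the infinite product
\[
    H \;=\; \prod_{n\in\omega} h_n
\]
defines a genuine element of $\Homeo(\omega^{\alpha+1})$, realized as the limit in the compact-open topology of the finite products $h_0 h_1 \cdots h_k$.

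Next I would compute the conjugate $\vp H \vp^{-1}$. Since conjugation by $\vp$ sends $h_n$ to $h_{n+1}$ and preserves the locally finite structure of the supports, one obtains
\[
    \vp H \vp^{-1} \;=\; \prod_{n\in\omega} h_{n+1} \;=\; \prod_{n\geq 1} h_n .
\]
Comparing $H = h_0 \cdot \prod_{n\geq 1} h_n$ with the displayed identity and using that $h_0 = h$ commutes with $\prod_{n\geq 1} h_n$ (their supports are disjoint), I would conclude
\[
    h \;=\; H\,(\vp H \vp^{-1})^{-1} \;=\; H\vp H^{-1}\vp^{-1} \;=\; [H,\vp],
\]
exhibiting $h$ as a single commutator.

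The only delicate point is the well-definedness and continuity of the infinite product $H$; the main obstacle will be verifying cleanly that $H$ is an actual homeomorphism and that conjugation by $\vp$ commutes with the infinite product. This is exactly what the local finiteness of $\{\vp^n(A)\}_{n\in\bz}$ and the convergence of finite products in the compact-open topology are designed to give, so it should be a short verification rather than a serious obstruction. Once these technicalities are in place, the algebraic identity $h=[H,\vp]$ is immediate.
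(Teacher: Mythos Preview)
Your argument is correct and is essentially identical to the paper's proof: both take a convergent $A$-translation $\tau$ (your $\vp$), form $\sigma=\prod_{n\ge 0}\tau^n h\tau^{-n}$ (your $H$), and verify $h=[\sigma,\tau]$. You have simply spelled out the ``standard commutator trick'' that the paper leaves to the reader.
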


\begin{proof}
Let \( A \) be a topological moiety that contains the support of \( h \). 
By \Cref{lem:translation}, there exists a convergent \( A \)-translation \( \tau \in \Homeo(\omega^{\alpha+1}) \). 
We now apply a standard commutator trick.
Observe that the supports of the homeomorphisms \( \tau^n \circ h \circ \tau^{-n} \) form a locally finite family of pairwise-disjoint sets, allowing us to define the infinite product \( \sigma = \prod_{n=0}^\infty (\tau^n \circ h \circ \tau^{-n}) \).
It can now readily be checked that \( h = [\sigma, \tau] \).  
\end{proof}

\begin{Thm}
	\label{thm:mainthm2}
	If \( \alpha \) is an ordinal, then \( \Homeo(\omega^{\alpha+1}) \) is uniformly perfect, and the commutator width is at most three. 
\end{Thm}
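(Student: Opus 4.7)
The plan is to combine Galvin's lemma (\Cref{prop:galvin}) with the commutator trick from \Cref{lem:commutator} in the most direct way possible: fragment an arbitrary homeomorphism into three pieces, each supported in a topological moiety, and then represent each piece as a single commutator.

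First I would fix a partition of the rank $\alpha+1$ elements of $\omega^{\alpha+1}$ into three infinite sets $S_1, S_2, S_3$, and use this to build three pairwise disjoint clopen sets $A, B, C$ with $A \cup B \cup C = \omega^{\alpha+1}$ and whose collections of rank $\alpha+1$ points are precisely $S_1, S_2, S_3$; this is achieved by grouping the stable neighborhoods of maximal rank elements as in the proof of \Cref{prop:moieties same} and \Cref{prop:coordinates}. Each of $A$, $B$, and $C$ is then a topological moiety, and so is any pairwise union, since both the union and its complement contain infinitely many rank $\alpha+1$ points.

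Next, given $h \in \Homeo(\omega^{\alpha+1})$, I would apply \Cref{prop:galvin} to the disjoint pair $A, B$ (using that $A \cup B$ is a moiety) to obtain a factorization $h = g_1 g_2 g_3$ with each $g_i$ lying in $F_A \cup F_B$. Every element of $F_A$ fixes $A$ pointwise and is therefore supported in $\omega^{\alpha+1} \ssm A$, which is itself a topological moiety (clopen, with infinitely many rank $\alpha+1$ elements in both $B \cup C$ and $A$). The analogous statement holds for $F_B$. Thus each factor $g_i$ is supported in a topological moiety.

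Finally, \Cref{lem:commutator} lets me write each $g_i$ as a single commutator in $\Homeo(\omega^{\alpha+1})$, so $h$ is a product of at most three commutators, which gives the desired bound. The only step that requires any care is the very first one — checking that the three-piece clopen decomposition with the prescribed distribution of maximal rank points actually exists — but this is essentially immediate from \Cref{prop:coordinates}: write $\omega^{\alpha+1}$ as $\bn \times (\omega^\alpha+1)$ and let $A$, $B$, $C$ correspond to the preimages of the three residue classes modulo $3$ in $\bn$.
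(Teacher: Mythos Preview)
Your proof is correct and follows essentially the same approach as the paper: fragment via \Cref{prop:galvin} into three pieces each supported in a topological moiety, then apply \Cref{lem:commutator} to each piece. The paper's version is simply terser, leaving implicit the choice of disjoint moieties $A$, $B$ with $A\cup B$ a moiety and the observation that elements of $F_A$ (resp.\ $F_B$) are supported in the complementary moiety, whereas you spell these out via the three-piece partition.
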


\begin{proof}
Let \( h \in \Homeo(\omega^{\alpha+1}) \). 
By \Cref{prop:galvin}, \( h = h_1 h_2 h_3 \), with \( h_i \) supported in a topological moiety. 
By \Cref{lem:commutator}, \( h_i \) is a commutator, and hence, \( h \) can be expressed as a product of three commutators. 
\end{proof}

Recall that an element \( g \) in a group \( G \) is a \emph{normal generator} if every element of \( G \) can be expressed as a product of conjugates of \( g \) and \( g^{-1} \), or equivalently, \( G \) is the smallest normal subgroup containing \( g \).
The element \( g \) is a \emph{uniform normal generator} if there exists \( k \in \bn \) such that every element of \( G \) can be expressed as a product of at most \( k \) conjugates of \( g \) and \( g^{-1} \); the \emph{\( g \)-width} of \( G \) is the minimum such \( k \). 

To classify normal generators, we use a technique that goes back to at least the work of Anderson \cite{AndersonAlgebraic}.
As with uniform perfectness, we first investigate writing a homeomorphism supported in a moiety as a bounded-length product of conjugates of a given element and its inverse; we can then use Galvin's lemma to upgrade this to express any element in the same form.

\begin{Prop}[Anderson's method for ordinals]
\label{prop:anderson}
Let \( \alpha \) be an ordinal, and let \( h \in \Homeo(\omega^{\alpha+1}) \).
If there exists a topological moiety \( A \) such that \( h(A) \cap A = \varnothing \), then each element of \( \Homeo(\omega^{\alpha+1}) \) supported in a topological moiety can be expressed as product of four conjugates of \( h \) and \( h^{-1} \). 
\end{Prop}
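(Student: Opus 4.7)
My plan is an Anderson-style argument: reduce via change of coordinates to the case where $g$ is supported in $A$, construct a convergent $A$-translation expressible as the product of two conjugates of $h^{\pm 1}$, and then apply the infinite commutator trick.

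First, since $g$ is supported in some topological moiety $M$, \Cref{lem:change of coordinates} furnishes $\sigma \in \Homeo(\omega^{\alpha+1})$ with $\sigma(M) = A$. A bound of four conjugates for $\sigma g \sigma^{-1}$ immediately yields the same bound for $g$, since conjugating each factor by $\sigma^{-1}$ preserves the property of being a conjugate of $h^{\pm 1}$. So I may assume $g$ is supported in $A$ with $h(A) \cap A = \varnothing$. The next step is to construct a convergent $A$-translation $\varphi$ of the form $\varphi = (k_1 h^{\varepsilon_1} k_1^{-1}) \cdot (k_2 h^{\varepsilon_2} k_2^{-1})$ with $k_i \in \Homeo(\omega^{\alpha+1})$ and $\varepsilon_i \in \{\pm 1\}$. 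Using \Cref{prop:coordinates}, I embed $A$ as the zeroth slice of a bi-infinite chain of pairwise disjoint moieties $\{A_n\}_{n\in\bz}$ inside $\omega^{\alpha+1}$, and choose the $k_i$ via change of coordinates so that the two conjugates together implement the shift $A_n \mapsto A_{n+1}$.

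With $\varphi$ in hand, the family $\{\varphi^n(A)\}_{n\in\bz}$ is locally finite and pairwise disjoint, so the infinite product $\Gamma := \prod_{n\geq 0} \varphi^n g \varphi^{-n}$ defines an element of $\Homeo(\omega^{\alpha+1})$. A direct check shows $\varphi \Gamma \varphi^{-1} = \Gamma \cdot g^{-1}$, which rearranges to $g = [\Gamma,\varphi]$. Expanding the commutator,
\[
    g = \Gamma \varphi \Gamma^{-1} \varphi^{-1} = \bigl(\Gamma k_1 h^{\varepsilon_1} k_1^{-1} \Gamma^{-1}\bigr)\bigl(\Gamma k_2 h^{\varepsilon_2} k_2^{-1} \Gamma^{-1}\bigr)\bigl(k_2 h^{-\varepsilon_2} k_2^{-1}\bigr)\bigl(k_1 h^{-\varepsilon_1} k_1^{-1}\bigr),
\]
which is manifestly a product of four conjugates of $h$ and $h^{-1}$.

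The main obstacle is the construction of $\varphi$. The hypothesis supplies only $h(A) \cap A = \varnothing$, not that $h$ itself is a translation; indeed, if $h$ is an involution swapping $A$ with $h(A)$, then no single conjugate of $h$ has infinite order on $A$, so at least two conjugates are genuinely needed to manufacture the infinite-order shift. The delicate part is thus engineering $k_1$ and $k_2$ so that the interaction of the two conjugates realizes the desired translation along $\{A_n\}$; this is where the ordinal topology and the change-of-coordinates tools from the previous subsection are essential.
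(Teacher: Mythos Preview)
Your overall architecture is sound: once you have a convergent translation $\varphi$ written as a product of two conjugates of $h^{\pm1}$, the identity $g=[\Gamma,\varphi]$ unpacks into four conjugates exactly as you wrote. The gap is the construction of $\varphi$, which you explicitly leave as ``the delicate part.'' Your sketched plan---embed $A$ in a bi-infinite chain $\{A_n\}$ and pick $k_1,k_2$ via change of coordinates so that the two conjugates realize the shift---is not clearly viable. Change of coordinates moves a conjugate of $h$ around but does not alter its conjugacy class, and for a general $h$ there is no reason two conjugates can be made to compose to a prescribed shift along a prescribed chain; the ``two involutions make a shift'' picture you seem to have in mind only applies when $h$ is itself an involution.

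There is, however, a clean completion that stays within your framework. Shrink first: conjugate $g$ into a submoiety $B\subset A$ (not into $A$ itself), and take $\tau$ a convergent $B$-translation supported in $A$ (\Cref{lem:translation} applied inside $A$). Set $\varphi=[\tau,h]=(\tau h\tau^{-1})h^{-1}$. Since $\tau$ is supported in $A$ and $h(A)\cap A=\varnothing$, the two factors have disjoint supports $A$ and $h(A)$; in particular $\varphi|_A=\tau|_A$, so $\varphi$ is a convergent $B$-translation and is visibly a product of two conjugates of $h^{\pm1}$. Your commutator trick then runs verbatim with $\Gamma=\prod_{n\ge0}\varphi^n g\varphi^{-n}=\prod_{n\ge0}\tau^n g\tau^{-n}$.

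For comparison, the paper's proof makes the same preliminary moves (pass to a submoiety $B\subset A$, choose a $B$-translation $\tau$ supported in $A$, form the infinite product $\sigma$) but packages the endgame differently: it sets $\eta=[\sigma,h]$ and builds an auxiliary homeomorphism $\psi$ (equal to $h$ on $A$, to $\tau\circ h^{-1}$ on $h(A)$, and the identity elsewhere) so that $f=\eta\,\psi\eta\psi^{-1}$. Your completed version amounts instead to $f=[\sigma,[\tau,h]]$. Both routes land on four conjugates; yours avoids the bespoke $\psi$ at the cost of needing the observation that $[\tau,h]$ restricts to $\tau$ on $A$.
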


\begin{proof}
Let \( f \in \Homeo(\omega^{\alpha+1}) \) such that \( f \) is supported in a moiety. 
Let \( A \) be a topological moiety such that \( h(A) \cap A  = \varnothing \), and let \( B \) be a submoiety of \( A \). 
By \Cref{lem:change of coordinates}, up to conjugating \( f \), we may assume that \( f \) is supported in \( B \). 
\Cref{prop:moieties same} and \Cref{lem:translation} imply the existence of  a convergent \( B \)-translation \( \tau \) supported in \( A \). 
Define \( \psi \in \Homeo(\omega^{\alpha+1}) \) by

\[
\psi(x) = \left\{
\begin{array}{ll}
h(x)  &   \text{if } x \in A  \\
(\tau \circ h^{-1})(x) & \text{if } x \in h(A) \\
x & \text{otherwise}
\end{array}\right.
\]

Let \( \sigma = \prod_{n=0}^\infty (\tau^n \circ f \circ \tau^{-n}) \), and let \( \eta =[\sigma, h] \). 
Then one can check that \[ f = \eta \circ \psi \circ \eta \circ \psi^{-1}. \]
Informally, one sees this by noting that \( \sigma \) is ``performing \( f \) on all the forward iterates of \( B \) in A'', and hence so is \( \eta \), but \( \eta \) is also ``performing \( f^{-1} \) on all the forward iterates of \( h(B) \) in \( h(A) \)''. 
Now, \( \psi \circ \eta \circ \psi^{-1} \) restricts to the inverse of \( \eta \) on the complement of \( B \) and is the identity on \( B \) (this is on account of \( \psi|_{h(A)} = \tau \circ h^{-1} |_{h(A)} \)).
Therefore, the composition \( \eta \circ (\psi \circ \eta \circ \psi^{-1}) \) is supported on \( B \), where it agrees with \( \sigma \) and hence with \( f \). 

Expanding out the above formulation of \( f \), we have:
\begin{align*}
	f 	&= \eta \circ \psi \circ \eta \circ \psi^{-1} \\
		&= \sigma \circ h \circ \sigma^{-1} \circ h^{-1} \circ \psi \circ \sigma \circ h \circ \sigma^{-1} \circ h^{-1} \circ \psi^{-1} \\
		&= (\sigma \circ h \circ \sigma^{-1}) \circ (h^{-1}) \circ (\psi \circ \sigma \circ h \circ \sigma^{-1} \circ \psi^{-1}) \circ (\psi \circ h^{-1} \circ \psi^{-1}),
\end{align*}
implying that \( f \) can be expressed as a product of four conjugates of \( h \) and \( h^{-1} \). 
\end{proof}

\begin{Cor}
Let \( \alpha \) be an ordinal, and let \( h \in \Homeo(\omega^{\alpha+1}) \).
If there exists a topological moiety \( A \) such that \( h(A) \cap A = \varnothing \), then \( h \) uniformly normally generates \( \Homeo(\omega^{\alpha+1}) \) and the \( h \)-width of \( \Homeo(\omega^{\alpha+1}) \) is at most twelve. 
\end{Cor}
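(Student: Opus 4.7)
The plan is to combine the two main tools built up in the section: Galvin's lemma (\Cref{prop:galvin}), which fragments an arbitrary homeomorphism into three pieces supported on topological moieties, and Anderson's method (\Cref{prop:anderson}), which writes each such piece as a product of four conjugates of $h$ and $h^{-1}$. Multiplying the bounds will give the promised constant $3 \cdot 4 = 12$.

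First I would produce a suitable pair of moieties to feed into Galvin's lemma. Using \Cref{prop:coordinates}, identify $\omega^{\alpha+1}$ with $\bn \times (\omega^\alpha + 1)$ and partition $\bn$ into three pairwise disjoint infinite subsets $N_1, N_2, N_3$; letting $A_i = N_i \times (\omega^\alpha + 1)$, we obtain three pairwise disjoint topological moieties whose pairwise unions are also topological moieties. Apply \Cref{prop:galvin} to $A_1$ and $A_2$: any $g \in \Homeo(\omega^{\alpha+1})$ factors as a product of three elements drawn alternately from $F_{A_1}$ and $F_{A_2}$.

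Next I would check the crucial compatibility point: each factor of this fragmentation is supported in a topological moiety, so Anderson's method is applicable. An element of $F_{A_i}$ fixes $A_i$ pointwise, so its support lies in the clopen complement of $A_i$. That complement contains the topological moiety $A_j$ (for $j \neq i$), hence has infinitely many rank $\alpha+1$ points, while $A_i$ itself, being a topological moiety, also has infinitely many. Therefore the complement of $A_i$ is clopen with both sides infinite at top rank, i.e., a topological moiety. Thus each of the three factors of $g$ is supported in a topological moiety.

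Finally I would invoke \Cref{prop:anderson}, which applies precisely because of the hypothesis that $h$ moves some topological moiety off itself, to write each of the three fragmentation factors as a product of at most four conjugates of $h^{\pm 1}$. Concatenating yields an expression of $g$ as a product of at most twelve conjugates of $h$ and $h^{-1}$, so $h$ uniformly normally generates $\Homeo(\omega^{\alpha+1})$ with $h$-width at most twelve. No step here is substantive beyond the two prior propositions; the only real verification is the moiety-complement check described above, which I view as the main (and only) obstacle.
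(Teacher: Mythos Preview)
Your proposal is correct and follows essentially the same route as the paper: fragment an arbitrary element via \Cref{prop:galvin} into three factors, note each factor lies in some $F_{A_i}$ and hence is supported in the complementary moiety, then apply \Cref{prop:anderson} to each factor for a total of twelve conjugates. The paper's proof is terser---it simply asserts that Galvin's lemma yields three homeomorphisms each supported in a topological moiety---whereas you spell out the moiety-complement verification explicitly; this is a welcome clarification but not a different argument.
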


\begin{proof}
By \Cref{prop:galvin}, each element of \( \Homeo(\omega^{\alpha+1}) \) can be expressed as a composition of three homeomorphisms each of which is supported in a topological moiety.
Therefore, by \Cref{prop:anderson}, each element of \( \Homeo(\omega^{\alpha+1}) \) can be expressed as a composition of twelve homeomorphisms each of which is a conjugate of \( h \) or \( h^{-1} \). 
\end{proof}

We have established the existence of uniform normal generators for \( \Homeo(\omega^{\alpha+1}) \), but we have yet to classify all such homeomorphisms.
To do so, we will show that if a homeomorphism induces an infinite permutation of the maximal rank  elements (i.e., the rank \( \alpha+1 \) elements), then it must move some moiety off itself, implying it is a uniform normal generator. 
The elements inducing a finite permutation on the rank \( \alpha+1 \) elements is a proper normal subgroup, and hence this will yield a complete classification of the normal generators.

\begin{Lem}
\label{lem:displacement}
Let \( \alpha \) be an ordinal, and let \( h \in \Homeo(\omega^{\alpha+1}) \). 
If \( h \) induces an infinite permutation on the set of rank \( \alpha + 1 \) elements of \( \omega^{\alpha+1} \), then there exists a moiety \( A \) such that \( h(A) \cap A = \varnothing \) and \( A \cup h(A) \) is a moiety. 
\end{Lem}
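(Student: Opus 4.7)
My plan is to reduce the statement to a combinatorial claim about the permutation that $h$ induces on the set of maximal rank elements, and then promote the combinatorial object to a topological moiety using the coordinate decomposition from \Cref{prop:coordinates} together with continuity of $h$.

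Let $M = \{\omega^\alpha \cdot k : k \in \bn\}$ be the set of rank $\alpha+1$ elements of $\omega^{\alpha+1}$, and let $\sigma \co M \to M$ be the permutation induced by $h$. First I would produce a subset $N_0 \subseteq M$ such that $\sigma(N_0) \cap N_0 = \varnothing$ and both $N_0$ and $M \ssm (N_0 \cup \sigma(N_0))$ are infinite. The case analysis runs on the cycle structure of $\sigma$: if $\sigma$ admits an infinite ($\bz$-)orbit $\{x_i : i \in \bz\}$, then $N_0 = \{x_{3i} : i \in \bz\}$ suffices, since $\sigma(N_0) = \{x_{3i+1}\}$ is disjoint from $N_0$ and the complement contains $\{x_{3i+2}\}$. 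Otherwise, infinite support of $\sigma$ forces infinitely many non-trivial finite orbits; if infinitely many have length at least three, pick one element from each; if instead all but finitely many non-trivial orbits are 2-cycles, pick one element from every other 2-cycle. This mirrors the permutation-group step in Galvin \cite{GalvinGenerating}.

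Next I would use \Cref{prop:coordinates} to realize $\omega^{\alpha+1}$ as the disjoint union of stable clopen sets $V_p$ indexed by $p \in M$, explicitly $V_{\omega^\alpha \cdot k} = [\omega^\alpha \cdot (k-1) + 1, \omega^\alpha \cdot k]$ for $k>1$ (with $V_{\omega^\alpha} = [0, \omega^\alpha]$), so that $p$ is the unique maximal rank element of $V_p$. For each $p \in N_0$, set $A_p := V_p \cap h^{-1}(V_{\sigma(p)})$; this is a clopen neighborhood of $p$ contained in $V_p$ (by continuity of $h$), hence stable, and by construction $h(A_p) \subseteq V_{\sigma(p)}$. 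Let $A = \bigcup_{p \in N_0} A_p$. Its complement is $\bigcup_{p \in M \ssm N_0} V_p \cup \bigcup_{p \in N_0}(V_p \ssm A_p)$, which is open, so $A$ is clopen; since $A$ contains $N_0$ and its complement contains $M \ssm N_0$, $A$ is a moiety. Now $h(A) \subseteq \bigsqcup_{q \in \sigma(N_0)} V_q$, which is disjoint from $A \subseteq \bigsqcup_{p \in N_0} V_p$ because $N_0 \cap \sigma(N_0) = \varnothing$; and $A \cup h(A)$ misses $V_p$ for every $p \in M \ssm (N_0 \cup \sigma(N_0))$, giving infinitely many maximal rank points in the complement of $A \cup h(A)$, so $A \cup h(A)$ is a moiety as well.

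The main obstacle is the combinatorial step producing $N_0$, where the ``infinite permutation'' hypothesis is fully used; the topological promotion is then routine, since $h$ is continuous on the compact space $\omega^{\alpha+1}$ and the explicit coordinate decomposition makes the required shrinking transparent.
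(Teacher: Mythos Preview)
Your argument is correct, but the route differs from the paper's. The paper avoids cycle decompositions entirely: it builds the moiety greedily, choosing rank $\alpha+1$ points $a_1, a_2, \ldots$ one at a time with $h(a_n) \neq a_n$, using continuity to shrink each to a stable clopen neighborhood $A_n$ with $h(A_n)\cap A_n=\varnothing$ and with $A_n$ disjoint from $A_j\cup h(A_j)\cup h^{-1}(A_j)$ for $j<n$, and then sets $A=\bigcup_n A_{2n}$. Your proof instead separates the problem into a purely combinatorial step on $M$ (via the cycle structure of $\sigma$) followed by a topological promotion through the coordinate blocks $V_p$ of \Cref{prop:coordinates}. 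The paper's greedy approach is more uniform---no case analysis on orbit types---and reflects the philosophy the authors advertise in the introduction (``unlike Bertram's proof, our proof does not rely on cycle decompositions''). Your approach, on the other hand, makes the role of the infinite-permutation hypothesis more explicit and reuses the fixed partition $\{V_p\}$ rather than ad hoc shrinking, which some may find cleaner structurally; it also isolates the Galvin-style permutation lemma as a standalone fact.
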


\begin{proof}
Choose a rank \( \alpha + 1 \) point \( a_1 \) such that \( h(a_1) \neq a_1 \). 
By continuity, there exists a clopen neighborhood \( A_1 \) of \( a_1 \) such that \( a_1 \) is the unique rank \( \alpha + 1 \) element of \( A_1 \) and such that  \( h(A_1) \cap A_1 = \varnothing \). 
Next, choose \( a_2 \) in the complement of \( A_1 \cup h(A_1) \cup h^{-1}(A_1) \) such that \( h(a_2) \neq a_2 \), which is possible as \( h \) induces an infinite permutation on the set of rank \( \alpha \) points. 
Again by continuity, we can choose a clopen neighborhood \( A_2 \) of \( a_2 \) in the complement of \( A_1 \cup h(A_1) \cup h^{-1}(A_1) \) such that \( h(A_2) \cap A_2 = \varnothing \). 
In particular, \( h(A_1) \cap A_2 = \varnothing \) and \( h(A_2) \cap A_1 = \varnothing \).
Proceeding in this fashion, we construct a sequence of clopen sets \( \{ A_n \}_{n\in\bn} \), each containing a unique rank \( \alpha + 1 \) point, such that \( h(A_i) \cap A_j = \varnothing \) for all \( i,j \in \bn \). 
Let \( A = \bigcup_{n\in\bn} A_{2n} \). 
Then, by construction, \( A \) is a moiety, \( h(A) \cup A \) is a moiety, and \( h(A) \cap A = \varnothing \). 
\end{proof} 

\begin{Thm}
\label{thm:normal generators 1}
Let \( \alpha \) be an ordinal. 
For \( h \in \Homeo(\omega^{\alpha+1}) \), the following are equivalent:
\begin{enumerate}[(i)]
\item \( h \) normally generates   \( \Homeo(\omega^{\alpha+1}) \).
\item \( h \) uniformly normally generates \( \Homeo(\omega^{\alpha+1}) \).
\item \( h \) induces an infinite permutation of the set of maximal rank elements of \( \omega^{\alpha+1} \). 
\end{enumerate}
Moreover, if any of the above conditions are satisfied, then the \( h \)-width of \( \Homeo(\omega^{\alpha+1}) \) is at most twelve. 
\end{Thm}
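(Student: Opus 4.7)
\medskip

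The plan is to prove the three implications (ii)$\Rightarrow$(i)$\Rightarrow$(iii)$\Rightarrow$(ii), leveraging the infrastructure already developed: Galvin's lemma (\Cref{prop:galvin}), Anderson's method (\Cref{prop:anderson}), and the displacement lemma (\Cref{lem:displacement}). The implication (ii)$\Rightarrow$(i) is immediate from the definitions.

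For (iii)$\Rightarrow$(ii), I would apply \Cref{lem:displacement} directly: if $h$ induces an infinite permutation on the set of maximal rank elements, then there exists a topological moiety $A$ with $h(A) \cap A = \varnothing$. Then \Cref{prop:anderson} shows that every homeomorphism supported in a topological moiety is a product of four conjugates of $h^{\pm 1}$, and Galvin's lemma (\Cref{prop:galvin}) writes any element of $\Homeo(\omega^{\alpha+1})$ as a product of three such moiety-supported pieces. Chaining these, any element of $\Homeo(\omega^{\alpha+1})$ is a product of at most twelve conjugates of $h^{\pm 1}$, which establishes (ii) together with the bound of twelve on the $h$-width.

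For (i)$\Rightarrow$(iii), I would argue by contrapositive. Let $M$ denote the set of maximal rank elements of $\omega^{\alpha+1}$. Since rank is a topological invariant, the restriction map $\rho \co \Homeo(\omega^{\alpha+1}) \to \Sym(M)$ sending each homeomorphism to its induced permutation is a well-defined group homomorphism. The preimage $\Fin(\omega^{\alpha+1}) := \rho^{-1}(\Sym_{\mathrm{fin}}(M))$ of the finitely-supported permutations is therefore a normal subgroup. It is a proper subgroup: by \Cref{lem:translation}, there exists a convergent $A$-translation for any topological moiety $A$, and such a homeomorphism permutes infinitely many maximal rank elements nontrivially, so does not lie in $\Fin(\omega^{\alpha+1})$. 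Consequently, if $h$ fails (iii), then $h \in \Fin(\omega^{\alpha+1})$, so the normal closure of $h$ is contained in this proper normal subgroup and $h$ cannot normally generate.

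The only step requiring any genuine work has already been absorbed into the preceding lemmas. The displacement lemma supplies the geometric input that (iii) is strong enough to trigger Anderson's method, and Galvin's lemma propagates the resulting bound from moiety-supported elements to the whole group. The remaining verification, that $\Fin(\omega^{\alpha+1})$ is a proper normal subgroup, is routine; the main subtlety is simply recognizing that rank is topologically invariant so that the homomorphism $\rho$ is well-defined.
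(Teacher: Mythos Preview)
Your proposal is correct and follows essentially the same argument as the paper: the implications (ii)$\Rightarrow$(i) and (i)$\Rightarrow$(iii) are declared clear in the paper, and (iii)$\Rightarrow$(ii) is obtained exactly as you describe, by chaining \Cref{lem:displacement}, \Cref{prop:anderson}, and \Cref{prop:galvin}. Your contrapositive argument for (i)$\Rightarrow$(iii) via the proper normal subgroup $\Fin(\omega^{\alpha+1})$ is slightly more explicit than the paper's treatment but is the same idea.
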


\begin{proof}
It is clear that (ii) implies (i) and that (i) implies (iii). 
Now, we assume (iii), that is, \( h \) induces an infinite permutation of the set of \( \alpha \) rank points of \( \omega^{\alpha+1} \).
In this case, \Cref{lem:displacement}, implies there exists a moiety \( A \) such that \( h(A) \cap A = \varnothing \).
By \Cref{prop:anderson}, every homeomorphism supported in a topological moiety is a product of four conjugates of \( h \) and \( h^{-1} \).
We can then apply \Cref{prop:galvin} to conclude that every element of \( \Homeo(\omega^{\alpha+1}) \) is a product of twelve conjugates of \( h \) and \( h^{-1} \), establishing that (iii) implies (ii).
\end{proof}

\subsection{Strong distortion}
\label{sec:strong distortion}

A group \( G \) is \emph{strongly distorted} if there exists \( m \in \bn \) and a sequence \( \{w_n\}_{n\in\bn} \subset \bn \) such that for any sequence \( \{ g_n\}_{n\in\bn} \subset G \) there exists \( S \subset G \) of cardinality at most \( m \) such that \( g_n \in S^{w_n} \). 
Above, for \( n \in \bn \),  \( S^n = \{ s_1s_2\cdots s_n: s_i \in S\} \) consists of the elements in \( G \) that can be written as a word of length \( n \) in \( S \). 
The notion of strong distortion was introduced by Calegari--Freedman \cite{CalegariDistortion}, and they provide a technique for establishing strong distortion that we employ here. 
This technique was described in a general form by Le Roux--Mann \cite[Construction~2.3]{LeRouxStrong} that we rephrase here in the context of ordinals. 
Below, and throughout the rest of the article, given \( g, h \in G \), we let \( g^h = h^{-1} g h \). 

\begin{Prop}[{\cite[Construction~2.3]{LeRouxStrong} for ordinals}]
\label{prop:construction}
Let \( \alpha \) be an ordinal, and let \( \{h_n\}_{n\in\bn} \) be a sequence in \( \Homeo(\omega^{\alpha+1}) \) such that \( h_n \) is supported in a topological moiety \( A \).
If \( \sigma \) is a convergent \( A \)-translation supported in a topological moiety \( B \) and if \( \tau \) is a convergent \( B \)-translation, then 
\[ 
	\vp = \prod_{n,m \in \omega} h_n^{\tau^n\sigma^m} 
\]
exists and is a homeomorphism, and  
\[ 
	h_n = [\vp^{\tau^{-n}}, \sigma]. 
\]
\qed
\end{Prop}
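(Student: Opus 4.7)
The plan has two parts: show that the infinite product $\vp$ is a well-defined homeomorphism, and then verify the commutator identity $h_n = [\vp^{\tau^{-n}}, \sigma]$.

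For well-definedness, I need the family $\{\tau^n\sigma^m(A)\}_{n,m}$, which contains the supports of the factors $h_n^{\tau^n\sigma^m}$, to be pairwise disjoint and locally finite. The first step is to observe that the hypotheses force $A \subseteq B$: otherwise $A \ssm B$ would be $\sigma$-fixed and would lie in $\sigma(A) \cap A = \varnothing$. Consequently $\sigma^m(A) \subseteq B$ for every $m$, so $\tau^n\sigma^m(A) \subseteq \tau^n(B)$. Pairwise disjointness then follows by combining pairwise disjointness of $\{\tau^n(B)\}_n$ (handling different $n$) with pairwise disjointness of $\{\sigma^m(A)\}_m$ (handling different $m$ within the same $n$). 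For local finiteness, a compact set $K$ meets only finitely many $\tau^n(B)$, and for each such $n$ the set $\tau^{-n}(K \cap \tau^n(B))$ is a compact subset of $B$ that meets only finitely many $\sigma^m(A)$ by local finiteness of the $\sigma$-translates.

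For the commutator identity, I would first reindex using the substitution $j = k-n$ to write $\Psi := \vp^{\tau^{-n}} = \prod_{j,m}h_{j+n}^{\tau^j\sigma^m}$. Thus $\Psi$ acts as a translated copy of $h_{j+n}$ on each cell $\tau^j\sigma^m(A)$; in particular, on $A = \tau^0\sigma^0(A)$ it acts as $h_n$, and on $\sigma^m(A)$ it acts as $\sigma^m h_n \sigma^{-m}$. Then I would evaluate $[\Psi, \sigma] = \Psi\sigma\Psi^{-1}\sigma^{-1}$ region by region. Outside $B$ and on $\tau^j(B)$ for $j \neq 0$, $\sigma$ acts trivially, so the commutator collapses to $\Psi\Psi^{-1} = \mathrm{id}$. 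On $\sigma^m(A)$ for $m \geq 1$, both $x$ and $\sigma^{-1}(x) \in \sigma^{m-1}(A)$ lie in $\supp(\Psi)$, and tracing the composition shows that $\Psi^{-1}$ on the outgoing leg (acting as $\sigma^{m-1}h_n^{-1}\sigma^{-(m-1)}$) combined with the shift by $\sigma$ exactly cancels $\Psi$ on the return leg (acting as $\sigma^m h_n\sigma^{-m}$). The crucial edge case is on $A$ itself: here $\sigma^{-1}(x) \in \sigma^{-1}(A)$ lies \emph{outside} $\supp(\Psi)$ since the product only includes $m \geq 0$, so $\Psi^{-1}$ acts trivially there, $\sigma$ returns $\sigma^{-1}(x)$ to $x$, and the final application of $\Psi$ yields $h_n(x)$. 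Since $h_n$ is supported in $A$, this gives $[\Psi,\sigma] = h_n$.

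The main obstacle is the pointwise bookkeeping on the cells $\sigma^m(A)$ for $m \geq 1$: namely, tracking how conjugation by $\sigma$ shifts the $\sigma^{m-1}$-translated $h_n$ into the $\sigma^m$-translated $h_n$, producing exact cancellation with the native action of $\Psi$ there. The asymmetry in the product---with $m$ ranging over $\omega$ rather than $\bz$---is precisely what causes this telescoping cancellation to break at $m = 0$, leaving the single uncanceled copy of $h_n$ on $A$.
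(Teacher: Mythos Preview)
Your argument is correct. The paper itself does not supply a proof of this proposition: it is stated with a terminal \qed, as a rephrasing in the ordinal setting of \cite[Construction~2.3]{LeRouxStrong}, so your detailed verification is strictly more than what the paper provides. One small omission in your region-by-region analysis: within $B$ you treat $\sigma^m(A)$ for $m\geq 0$ but do not explicitly address points of $B\smallsetminus\bigcup_{m\geq 0}\sigma^m(A)$ (in particular $\sigma^m(A)$ for $m<0$); there $\Psi$ is the identity, $\sigma^{-1}$ keeps you in the same region, and the commutator is trivially the identity, so this is easily filled in.
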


\begin{Thm}
\label{thm:main3}
For every ordinal \( \alpha \), the group \( \Homeo(\omega^{\alpha+1}) \) is strongly distorted.
\end{Thm}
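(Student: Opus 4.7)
The plan is to combine Galvin's lemma (\Cref{prop:galvin}) with \Cref{prop:construction}: the former fragments any homeomorphism into at most three pieces supported in fixed topological moieties, and the latter compresses an entire sequence of such pieces into a single commutator-type expression whose word length grows only linearly in $n$.

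Fix disjoint topological moieties $X,Y\subset\omega^{\alpha+1}$ whose union is again a topological moiety; their complements $X^c:=\omega^{\alpha+1}\ssm X$ and $Y^c:=\omega^{\alpha+1}\ssm Y$ are then also topological moieties, and $F_X$ (resp.\ $F_Y$) consists of homeomorphisms supported in $X^c$ (resp.\ $Y^c$). Given a sequence $\{g_n\}_{n\in\bn}$ in $\Homeo(\omega^{\alpha+1})$, \Cref{prop:galvin} places each $g_n$ in $F_XF_YF_X\cup F_YF_XF_Y$. Inserting an identity factor on the left or on the right to homogenize the two cases of this dichotomy, each $g_n$ can be written uniformly as
\[
g_n=p_n\,q_n\,r_n\,s_n,\qquad p_n,r_n\in F_X,\ q_n,s_n\in F_Y.
\]
This produces four sequences with $(p_n),(r_n)$ supported in $X^c$ and $(q_n),(s_n)$ supported in $Y^c$.

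Using \Cref{lem:translation}, fix once and for all a convergent $X^c$-translation $\sigma_X$ supported in some topological moiety $B_X$, and a convergent $B_X$-translation $\tau_X$; similarly choose $\sigma_Y,\tau_Y$ adapted to $Y^c$. These translations depend only on the ambient moiety and not on the sequence $\{g_n\}$, so they may be shared between the two sequences supported in a common moiety. Applying \Cref{prop:construction} separately to each of the four sequences yields homeomorphisms $\vp_p,\vp_r,\vp_q,\vp_s$ with
\[
p_n=[\vp_p^{\tau_X^{-n}},\sigma_X],\quad r_n=[\vp_r^{\tau_X^{-n}},\sigma_X],\quad q_n=[\vp_q^{\tau_Y^{-n}},\sigma_Y],\quad s_n=[\vp_s^{\tau_Y^{-n}},\sigma_Y].
\]

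Expanding, the commutator $[\vp^{\tau^{-n}},\sigma]=\tau^{-n}\vp\tau^n\sigma\tau^{-n}\vp^{-1}\tau^n\sigma^{-1}$ is a word of length $4n+4$ in the alphabet $\{\vp,\sigma,\tau\}^{\pm 1}$. Consequently each of $p_n,q_n,r_n,s_n$ is a product of at most $4n+4$ elements of the set
\[
S=\{\vp_p^{\pm 1},\vp_r^{\pm 1},\sigma_X^{\pm 1},\tau_X^{\pm 1},\vp_q^{\pm 1},\vp_s^{\pm 1},\sigma_Y^{\pm 1},\tau_Y^{\pm 1}\},
\]
which has cardinality at most $16$. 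Hence $g_n\in S^{16n+16}$, and strong distortion follows with witnessing constants $m=16$ and $w_n=16n+16$.

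The main work has already been done in establishing \Cref{prop:construction}; the only mild obstacle in the plan above is the uniform fragmentation step across Galvin's dichotomy, which is resolved by the identity-padding trick so that the positions of the $F_X$- and $F_Y$-factors are the same for every $n$. The remaining bookkeeping---verifying that the translations $\sigma_X,\tau_X$ and $\sigma_Y,\tau_Y$ depend only on the ambient moiety, and counting word lengths in the expanded commutators---is routine.
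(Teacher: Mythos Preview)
Your proof is correct and follows essentially the same strategy as the paper: fragment via Galvin's lemma (\Cref{prop:galvin}) and then compress each resulting sequence using \Cref{prop:construction}. The only differences are bookkeeping: the paper introduces an auxiliary homeomorphism $\theta$ that conjugates one of the two moieties onto the other, so that all pieces can be pushed into a single moiety and a single pair of translations $(\sigma,\tau)$ suffices (yielding $m=4$, $w_n=12n+18$), whereas you work directly in the two moieties $X^c$ and $Y^c$ with separate translations; your identity-padding trick to homogenize the two cases of Galvin's dichotomy is a clean device that the paper leaves implicit.
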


\begin{proof}
Let \( w_n = 12n+18 \) for \( n \in \bn \). 
We will show that given any sequence \( \{ h_n\}_{n\in\bn} \subset \Homeo(\omega^{\alpha+1}) \), there exists \( S \subset \Homeo(\omega^{\alpha+1}) \) of cardinality four satisfying \( h_n \in S^{w_n} \), thereby establishing that \( \Homeo(\omega^{\alpha+1}) \) is strongly distorted.  

Fix a topological moiety \( A \).
Let us first assume that each of the \( h_n \) is supported in \( A \). 
By \Cref{lem:translation}, there exists a convergent \( A \)-translation \( \sigma \) supported in a topological moiety \( B \). 
Using \Cref{lem:translation} again, there exists a convergent \( B \)-translation \( \tau \). 
Applying \Cref{prop:construction} yields a homeomorphism \( \vp \) such that \( h_n \) can be expressed as a word of length \( 4n+4 \) in the set \( \{ \vp, \sigma, \tau \} \). 

We now assume \( \{h_n\} \) is a general sequence, but we keep \( A \), \( B \), \( \sigma \), \( \tau \), and \( \vp \) as above.
Fix a topological moiety \( C \) in \( \omega^{\alpha+1} \) such that \( A \cup C \) is a topological moiety, and using \Cref{lem:change of coordinates}, choose \( \theta \in \Homeo(\omega^{\alpha+1}) \) such that \( \theta(A) = C \). 
By \Cref{prop:galvin}, we can write 
\[
h_n = h_{n,1} \circ h_{n,2} \circ h_{n,3},
\]
where either \( h_{n,i} \) or \( h_{n,i}^\theta \) is supported in \( A \). 
Therefore, by the above argument, we can write \( h_n \) as a word of length \( 12n+18 \) in the subgroup generated by the set \( S = \{ \sigma, \tau, \vp, \theta \} \), as desired.
\end{proof}

%---------------
% Cantor--Bendixson degree greater than one
%---------------

\section{Cantor--Bendixson degree greater than one}

We now turn our attention to ordinals of the form \( \omega^{\alpha+1}\cdot d +1 \), where \( d \in \bn \). 
Let us set some notations. 
Given an ordinal \( \alpha \) and \( d \in \bn \), we let:
\begin{itemize}
	\item \( X_{\alpha,d} = \omega^{\alpha+1}\cdot d + 1 \),
	\item \( \mu_k = \omega^{\alpha+1}\cdot k \in X_{\alpha,d} \) for \( 1 \leq k \leq d \),
	\item \( M_{\alpha,d} = \{\mu_1, \ldots, \mu_d\} \) be the set of maximal rank points, i.e., the rank \( \alpha+2 \) points of \( X_{\alpha,d} \), 
	\item \( N_{\alpha,d} = \{ \omega^{\alpha+1}\cdot k + \omega^\alpha \cdot \ell : k \in d, \ell \in \bn \}  \) denote the subset of next-to-maximal rank points, i.e., the rank \( \alpha + 1 \) points of \( X_{\alpha,d} \), 
	\item  \( \HH_{\alpha,d}  = \Homeo(X_{\alpha,d}) \),
	\item  \( \F_{\alpha,d} \) denote the subgroup of \( \HH_{\alpha,d} \) consisting of homeomorphisms that induce a finite permutation on the set \( N_{\alpha,d} \),
	\item  \( \overline \F_{\alpha,d} \) denote the closure of \( \F_{\alpha,d} \) in \( \HH_{\alpha,d} \), and 
	\item  \( \PH_{\alpha,d} \) denote the subgroup of \( \HH_{\alpha,d} \) fixing each of the maximal rank points.
\end{itemize}

\medskip
We will often drop the subscripts when they are clear from context. 

\subsection{Split short exact sequences}
\label{sec:ses}

The goal of this subsection is to establish two split exact sequences, one of the form 
\begin{equation}
\label{eq:split1}
	1 \longrightarrow \overline \F_{\alpha,d} \longrightarrow \PH_{\alpha,d} \longrightarrow \bz^{d-1} \longrightarrow 1
\end{equation}
and the other of the form
\begin{equation}
\label{eq:split2}
	1 \longrightarrow  \mathrm{\PH}_{\alpha,d} \longrightarrow \HH_{\alpha,d} \longrightarrow \mathrm{Sym}(d)\longrightarrow 1
\end{equation}
where \( \mathrm{Sym}(d) \) is the symmetric group on \( d \) letters. 

Let us first describe the homomorphism \( \PH_{\alpha,d} \to \bz^{d-1} \). 
Choose pairwise-disjoint clopen neighborhoods \( \Omega_1, \ldots, \Omega_{d-1} \) of \( \mu_1, \ldots, \mu_{d-1} \), respectively. 
Given \( h \in \PH \), let 
\[ 
	O_k(h) = \{ x \in \Omega_k \cap N : h(x) \notin \Omega_k \}
\] 
and let 
\[ 
	I_k(h) = \{ x \in N \ssm \Omega_k : h(x) \in \Omega_k \},
\]
that is, \( O_k(h) \) is the subset of \( N \) consisting of points that \( h \) moves out of \( \Omega_k \) by \( h \) and \( I_k(h) \) is the subset  consisting of points of \( N \) that \( h \) moves into \( \Omega_k \). 
As \( h \in \PH \) and as \( N \) is a discrete set whose closure is \( N \cup M \), continuity implies that \( O_k(h) \) and \( I_k(h) \) are each finite.
This allows us to define \( \chi_k \co \PH \to \bz \) by
\[
	\chi_k(h) = |O_k(h)| - |I_k(h)|. 
\]
It is readily verified that \( \chi_k \) is a homomorphism. 
We will now focus our attention on the homomorphism \( \chi_{\alpha,d} \co \PH_{\alpha,d} \to \bz^{d-1} \) given by 
\[
	\chi_{\alpha,d}(h) = \left(\chi_1(h), \ldots, \chi_{d-1}(h)\right).
\]
As with our other subscripts, we will simply write \( \chi \) when doing so does not cause confusion. 
A priori, the definition of \( \chi \) depends on our choice of \( \Omega_1, \ldots, \Omega_{d-1} \), but it turns out this is not the case; we will explain after proving Theorem~\ref{thm:ses}.

\begin{Lem}
\label{lem:section}
Let \( \alpha \) be an ordinal, and let \( d \in \bn \ssm\{1\} \). 
The homomorphism \( \chi_{\alpha,d} \) admits a section. 
\end{Lem}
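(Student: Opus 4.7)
The plan is to build, for each $k \in \{1, \ldots, d-1\}$, an element $\sigma_k \in \PH_{\alpha,d}$ with pairwise disjoint supports such that $\chi_{\alpha,d}(\sigma_k) = e_k$, the $k$-th standard basis vector of $\bz^{d-1}$. Since disjoint supports imply commutativity, the assignment $s(a_1, \ldots, a_{d-1}) = \sigma_1^{a_1} \circ \cdots \circ \sigma_{d-1}^{a_{d-1}}$ will then define a homomorphism $\bz^{d-1} \to \PH_{\alpha,d}$, and $\chi_{\alpha,d} \circ s$ will be the identity by construction.

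Each $\sigma_k$ will be an \emph{infinite shift} that carries exactly one rank-$(\alpha+1)$ point out of $\Omega_k$ and deposits it into a neighborhood of $\mu_d$, the unique maximal rank point not tracked by $\chi$. Concretely, fix a stable neighborhood $U_k \subset \Omega_k$ of $\mu_k$. By \Cref{lem:stable} and \Cref{prop:coordinates}, $U_k$ is homeomorphic to $\omega^{\alpha+1}+1$, so $U_k \setminus \{\mu_k\}$ partitions into a countable family of pairwise disjoint clopen sets $\{J_i^{(k)}\}_{i \in \bn}$, each homeomorphic to $\omega^\alpha + 1$, with the property that $\{\mu_k\} \cup \bigcup_{i \geq N} J_i^{(k)}$ is a basic neighborhood of $\mu_k$ for every $N$. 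Likewise, fix a stable neighborhood $W$ of $\mu_d$ disjoint from $\Omega_1 \cup \cdots \cup \Omega_{d-1}$, and inside $W \setminus \{\mu_d\}$ choose pairwise disjoint clopen sets $\{V_i^{(k)} : k \in \{1,\ldots,d-1\},\, i \geq 0\}$, each homeomorphic to $\omega^\alpha + 1$, such that for each fixed $k$ the family $\{V_i^{(k)}\}_{i \geq 0}$ accumulates to $\mu_d$. This is possible because $W \setminus \{\mu_d\}$ contains countably infinitely many rank-$(\alpha+1)$ points, which can be split into $d-1$ infinite subsequences.

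Now define $\sigma_k$ to be the identity off $U_k \cup \bigcup_{i \geq 0} V_i^{(k)}$, to fix $\mu_k$ and $\mu_d$, and on the remaining pieces to act as the shift along the bi-infinite sequence $\ldots, J_3^{(k)}, J_2^{(k)}, J_1^{(k)}, V_0^{(k)}, V_1^{(k)}, V_2^{(k)}, \ldots$: fix homeomorphisms $J_{i+1}^{(k)} \to J_i^{(k)}$ for $i \geq 1$, $J_1^{(k)} \to V_0^{(k)}$, and $V_i^{(k)} \to V_{i+1}^{(k)}$ for $i \geq 0$, all of which exist by \Cref{thm:classification}. Continuity of $\sigma_k$ at $\mu_k$ (respectively $\mu_d$) follows because the pieces $J_i^{(k)}$ (respectively $V_i^{(k)}$) accumulate only at $\mu_k$ (respectively $\mu_d$), and one checks that basic neighborhoods of these points are mapped into basic neighborhoods; continuity elsewhere is automatic since $\sigma_k$ is piecewise defined on clopen sets. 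Thus $\sigma_k$ is a homeomorphism, fixes every maximal rank point, and lies in $\PH_{\alpha,d}$.

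The flux computation is then immediate: for the target coordinate $k$, the unique rank-$(\alpha+1)$ point of $J_1^{(k)}$ is the only contribution to $O_k(\sigma_k)$ while $I_k(\sigma_k) = \varnothing$, so $\chi_k(\sigma_k) = 1$; for $j \neq k$, the support of $\sigma_k$ misses $\Omega_j$ altogether, so $\chi_j(\sigma_k) = 0$. The supports of $\sigma_k$ and $\sigma_\ell$ are disjoint for $k \neq \ell$ by construction, so they commute, and the homomorphism $s$ above is well-defined. The only bookkeeping subtlety is separating the $\{V_i^{(k)}\}$ families across different $k$ so that the $\sigma_k$ commute; otherwise the argument reduces to the coordinate description of ordinals developed in \Cref{sec:ordinals}.
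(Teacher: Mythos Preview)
Your proof is correct and follows essentially the same approach as the paper: both construct, for each \( k \in \{1,\ldots,d-1\} \), a ``shift'' homeomorphism moving a bi-infinite string of copies of \( \omega^\alpha+1 \) from a neighborhood of \( \mu_k \) toward \( \mu_d \), and then verify the flux values directly. One small imprecision: the supports of \( \sigma_k \) and \( \sigma_\ell \) are not literally disjoint---both contain \( \mu_d \) in their closures since the \( V_i^{(k)} \) accumulate there---but since each \( \sigma_k \) fixes \( \mu_d \) and the sets of \emph{moved} points are disjoint, commutativity still follows (the paper makes exactly this observation).
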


\begin{proof}
For \( i \in \{ 1, \ldots, d-1\} \), let \( Y_i = \omega^\alpha + 1 \), and let \( \overline Y_i  = (Y_i \times \bz) \cup \{ \hat \mu_i, \hat \mu_{d,i} \} \) be the two-point compactification of \( Y_i \times \bz \) where \( (y, n) \to \hat \mu_{d,i} \) as \( n \to \infty \) and \( (y,n) \to \hat \mu_i \) as \( n \to -\infty \). 
Define the homeomorphism \( \hat s_i \co \overline Y_i \to \overline Y_i \) by \( \hat s_i(y,n) = (y, n+1) \), \( \hat s_i(\hat \mu_i) = \hat \mu_i \), and \( \hat s_i( \hat \mu_{d,i}) = \hat \mu_{d,i}  \).

Let \( Y \) be the quotient space obtained from the disjoint union of the \( \overline Y_i \) by identifying \( \hat \mu_{d,i} \) and \( \hat \mu_{d,j} \) for \( i,j \in \{1, \ldots, d-1\} \), that is, 
\[
	Y  = \left. \left(\bigsqcup_{i=1}^{d-1} \overline Y_i \right)  \middle/ \left\{ \hat \mu_{d,1}, \ldots, \hat \mu_{d,d-1} \right\} \right.
\]
Note that each of the \( \hat s_i \) descends to a homeomorphism \( s_i \co Y \to Y \).

Now, using the classification of successor ordinals, it is an exercise to show that there exists a homeomorphism \( Y \to  X \) mapping \( \hat \mu_i \) to \( \mu_i \) and the equivalence class of the \( \hat \mu_{d,i} \) to \( \mu_d \).  
Therefore, we may view the \( s_i \) as homeomorphisms of \( X \) satisfying \( s_i^n(x) \to \mu_d \) as \( n \to \infty \) and \( s_i^n(x) \to \mu_i \) as \( n \to -\infty \) for any \( x \in \supp(s_i) \ssm M \). 
By construction, the \( s_i \) pairwise commute, as the intersection of any two of their supports is a fixed point, namely \( \mu_d \).

We claim that \( \chi_i(s_i) = 1 \) and \( \chi_j(s_i) = 0 \) if \( i \neq j \). 
Let us argue the first statement.
Let \( U_i \)\footnote{We work with an arbitrary neighborhood rather than the \( \Omega_k \), as we want to  later observe that \( \chi \) does not depend on the choice of the \( \Omega_k \).} be a clopen neighborhood of \( \mu_i \) in \( Y \) disjoint from \( \mu_j \) for \( j \neq i \). 
Note that if \( j\neq i \), then \( s_i \) restricts to the identity on \( U_i \cap \overline Y_j \), allowing us to focus on  \( \widehat U_i = U_i \cap \overline Y_i \) and to work in the coordinates of \( \overline Y_i \).
Let \( N_i = \{ n \in \bz : (\omega^\alpha, n) \in \widehat U_i \} \), and let \( a = \min (\bz \ssm N_i) - 1 \). 
Let \( b \in N_i \) such that \( \hat s_i(\omega^\alpha, b) \notin \widehat U_i \).
If \( b > a \), then letting \( c = \max \{ m \in \bz \ssm N_i : m < b \} \), we have that \( \hat s_i(\omega^\alpha, c) \in \widehat U_i \). 
With the exception of \( (\omega^\alpha, a) \), we have paired each element of the form \( (\omega^\alpha, n) \in U_i \) that \( \hat s_i \) sends out of \( \widehat U_i \) with an element of the same form in the complement of \( U_i \) that \( \hat s_i \) maps into \( \widehat U_i \). 
As \( \hat s_i(\omega^\alpha, n) = (\omega^\alpha, n+1) \in \widehat U_i \) for \( m < a \) and \( \hat s_i(\omega^\alpha, a) \notin \widehat U_i \), we can conclude  that \( \chi_i(s_i) = 1 \). 
A similar line of argument can be used to establish \( \chi_j(s_i) = 0 \) whenever \( i \neq j \). 

Letting \( \{e_1, \ldots, e_{d-1} \} \) be the standard free generating set for \( \bz^{d-1} \), it follows that the map \( \bz^{d-1} \to \PH \) given by \( e_i \mapsto s_i \) is a section of \( \chi \). 
\end{proof}

To establish \eqref{eq:split1}, we now need to show that \( \overline \F \) is the kernel of \( \chi \). 
We first establish that \( \overline \F \) is contained in the kernel and then establish the converse; we split the proof into two lemmas. 

\begin{Lem}
\label{lem:in kernel}
Let \( \alpha \) be an ordinal, and let \( d \in \bn \). 
Then \( \overline \F_{\alpha,d} \) is contained in \( \ker \chi_{\alpha,d} \). 
\end{Lem}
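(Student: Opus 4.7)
The plan is to split the argument into two parts: first, verifying directly that \( \F_{\alpha,d} \subseteq \ker \chi_{\alpha,d} \); and second, showing that \( \ker \chi_{\alpha,d} \) is closed in \( \PH_{\alpha,d} \), i.e., that \( \chi_{\alpha,d} \) is continuous when \( \bz^{d-1} \) is given the discrete topology. Together these will imply the lemma, since \( \overline{\F}_{\alpha,d} \) is the smallest closed subset of \( \PH_{\alpha,d} \) containing \( \F_{\alpha,d} \).

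For the first part, fix \( h \in \F_{\alpha,d} \) and \( k \in \{1, \ldots, d-1\} \). The induced permutation \( \sigma = h|_N \) has finite support \( S \subset N \); set \( A = \Omega_k \cap N \). Since \( \sigma^{-1} \) is the identity off \( S \), both \( O_k(h) = A \ssm \sigma^{-1}(A) \) and \( I_k(h) = \sigma^{-1}(A) \ssm A \) are contained in \( S \). As \( \sigma \) restricts to a bijection of \( S \), one has \( |A \cap S| = |\sigma^{-1}(A) \cap S| \); subtracting the common intersection \( A \cap \sigma^{-1}(A) \cap S \) from both sides yields \( |O_k(h)| = |I_k(h)| \), so \( \chi_k(h) = 0 \). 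This gives \( h \in \ker \chi_{\alpha,d} \).

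For the second part, I will fix \( h_0 \in \PH_{\alpha,d} \) and show that \( \chi_k \) is locally constant at \( h_0 \). The crucial topological fact to exploit is that the only accumulation points of \( N \) in \( X_{\alpha,d} \) lie in \( M \). Assuming (as one may after shrinking) that each \( \Omega_k \) meets \( M \) only in \( \mu_k \), and setting \( F_k = O_k(h_0) \cup I_k(h_0) \), a finite subset of \( N \ssm \{\mu_k\} \), the sets
\[
    C_k := (\Omega_k \cap N \ssm F_k) \cup \{\mu_k\}
    \qquad\text{and}\qquad
    D_k := ((N \ssm \Omega_k) \cup (M \ssm \{\mu_k\})) \ssm F_k
\]
will be compact, and by the definition of \( F_k \) one will have \( h_0(C_k) \subseteq \Omega_k \) and \( h_0(D_k) \subseteq X_{\alpha,d} \ssm \Omega_k \). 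Joint continuity of evaluation in the compact-open topology then produces a neighborhood \( U_0 \) of \( h_0 \) such that \( h(C_k) \subseteq \Omega_k \) and \( h(D_k) \subseteq X_{\alpha,d} \ssm \Omega_k \) for every \( h \in U_0 \); in particular \( O_k(h) \subseteq F_k \) and \( I_k(h) \subseteq F_k \). Intersecting with the finitely many open conditions \( \{ h : h(x) \in \Omega_k \} \) for \( x \in I_k(h_0) \) and \( \{ h : h(x) \in X_{\alpha,d} \ssm \Omega_k \} \) for \( x \in O_k(h_0) \) will force \( O_k(h) = O_k(h_0) \) and \( I_k(h) = I_k(h_0) \), hence \( \chi_k(h) = \chi_k(h_0) \). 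So \( \chi_k \), and thus \( \chi_{\alpha,d} \), is continuous, making \( \ker \chi_{\alpha,d} \) closed.

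The main obstacle will be the second part: one must identify the correct compact subsets \( C_k \) and \( D_k \) so that the images \( h_0(C_k) \) and \( h_0(D_k) \) land cleanly on one side of \( \Omega_k \). The reason this works is the strong topological constraint that accumulation points of \( N \) must lie in \( M \), so removing the finite set \( F_k \) and adjoining the appropriate elements of \( M \) produces closed (hence compact) sets on which the behavior of \( h \) can be controlled by proximity to \( h_0 \).
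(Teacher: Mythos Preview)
Your proof is correct, but it takes a somewhat different route from the paper's. Both arguments begin by checking directly that \( \F_{\alpha,d} \subseteq \ker\chi_{\alpha,d} \) (the paper leaves this as an exercise; your bijection-on-the-finite-support argument is exactly what is needed). The difference is in the passage to the closure. You prove the stronger fact that \( \chi_k \) is locally constant in the compact-open topology by manufacturing compact sets \( C_k, D_k \) on which nearby homeomorphisms are forced to agree with \( h_0 \) modulo \( \Omega_k \). The paper instead uses the permutation-topology description: for \( f \in \overline\F_{\alpha,d} \) the set \( \mathcal U = \bigcap_k \{\,h : h(\Omega_k)=f(\Omega_k)\,\} \) is an open neighborhood of \( f \), so it meets \( \F_{\alpha,d} \) in some \( g \); then \( (g^{-1}f)(\Omega_k)=\Omega_k \) forces \( \chi(g^{-1}f)=0 \), whence \( \chi(f)=\chi(g)=0 \). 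The paper's argument is shorter because it exploits the clopen subbasis directly and only needs a single nearby \( g \in \F \); yours is a little more work but yields continuity of \( \chi \) outright, which the paper uses later (just before Corollary~\ref{cor:d=1}) without proof. One cosmetic point: you write that \( F_k \) is ``a finite subset of \( N\setminus\{\mu_k\} \)'', but \( \mu_k\in M \), not \( N \), so this set-difference is redundant.
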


\begin{proof}
It is an exercise to check that \( \F \) is contained in the kernel of \( \chi \). 
Now, let \( f \in \overline \F \). 
The set
\[ 
	\mathcal U := \bigcap_{k =1}^{d-1} \left\{ h \in \HH : h(\Omega_k) = f(\Omega_k) \right\} 
\]
is an open neighborhood of \( f \) in \( \HH \). 
Therefore, as \( \F \) is dense in \( \overline \F \) by definition, there exists \( g \in \mathcal U \cap \F  \).
It follows that \( (g^{-1}\circ f)(\Omega_k) = \Omega_k \) for each \( k \), implying that 
\[ 0 = \chi(g^{-1}\circ f) = \chi(f) - \chi(g) = \chi(f), \]
as \( g \in \ker \chi \).  
\end{proof}

In what follows, we let \( \Omega_d \) denote the complement of \( \Omega_1 \cup \cdots \cup \Omega_{d-1} \) in \( X_{\alpha,d} \). 

\begin{Lem}
\label{lem:factorization}
Let \( \alpha \) be an ordinal, and let \( d \in \bn \). 
If \( f \in  \ker\chi_{\alpha,d} \), then there exist homeomorphisms \( f_0, f_1, \ldots, f_d \in \overline\F_{\alpha,d} \) satisfying 
\begin{itemize}
\item \( f_0 \in \F_{\alpha,d} \),
\item \( f_k \) is supported in \( \Omega_k \) for \( k \in \{1, \ldots, d\} \), and
\item \( f = f_0 \circ f_1 \circ \cdots \circ f_d \). 
\end{itemize} 
In particular, \( \ker \chi_{\alpha,d} \) is contained in \( \overline \F_{\alpha,d} \).
\end{Lem}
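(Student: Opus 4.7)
The plan is to construct a homeomorphism $g \in \HH_{\alpha,d}$ that preserves each $\Omega_k$ setwise and whose induced permutation of $N_{\alpha,d}$ agrees with $f$ off a finite set. Setting $f_0 := f \circ g^{-1}$ and letting $f_k$ be $g|_{\Omega_k}$ extended by the identity outside $\Omega_k$, we will then have $f = f_0 \circ f_1 \circ \cdots \circ f_d$ with $f_0 \in \F_{\alpha,d}$ and each $f_k \in \overline{\F}_{\alpha,d}$ supported in $\Omega_k$.

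First I would set up the finite combinatorics. Writing $k(x)$ for the index with $x \in \Omega_{k(x)}$, each $O_k(f)$ and $I_k(f)$ is finite, and $\bigcup_k O_k(f) = \bigcup_k I_k(f) = \{x \in N_{\alpha,d} : k(x) \neq k(f(x))\}$. The sets $O_k(f)$ (respectively $I_k(f)$) are pairwise disjoint, so $\sum_k |O_k(f)| = \sum_k |I_k(f)|$, and combined with $\chi(f) = 0$ this forces $|O_k(f)| = |I_k(f)|$ for every $k \in \{1, \ldots, d\}$ (including $k=d$).

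Next, each $\Omega_k$ is a stable clopen neighborhood of $\mu_k$ (as the other $\mu_j$ lie outside), and combining \Cref{lem:stable} with \Cref{prop:coordinates} yields a clopen decomposition $\Omega_k = \{\mu_k\} \sqcup \bigsqcup_{x \in \Omega_k \cap N_{\alpha,d}} C_x$, where each $C_x$ is a stable neighborhood of the unique rank-$(\alpha+1)$ point $x$ in it. Any permutation $\pi$ of $\Omega_k \cap N_{\alpha,d}$ together with a choice of homeomorphisms $C_x \to C_{\pi(x)}$ assembles into a homeomorphism of $\Omega_k$ fixing $\mu_k$: continuity at $\mu_k$ is automatic because the basic open neighborhoods of $\mu_k$ are the complements of finite unions of clumps. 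Define $\pi_k$ by $\pi_k(x) := f(x)$ for $x \in (\Omega_k \cap N_{\alpha,d}) \setminus O_k(f)$ (which indeed lands in $\Omega_k \cap N_{\alpha,d}$), and extend on $O_k(f)$ by any bijection onto $f(I_k(f)) \subset \Omega_k \cap N_{\alpha,d}$; such a bijection exists since $|O_k(f)| = |f(I_k(f))|$. Let $g_k \in \Homeo(\Omega_k)$ be the resulting homeomorphism, let $g := \bigsqcup_k g_k \in \HH_{\alpha,d}$, and let $f_k$ be $g_k$ extended by the identity outside $\Omega_k$, so that $g = f_1 \circ \cdots \circ f_d$ (the supports are disjoint, hence the $f_k$ commute).

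Set $f_0 := f \circ g^{-1}$. Given $y \in N_{\alpha,d}$ with $y \in \Omega_k$ and $y \notin \bigcup_j f(I_j(f))$, in particular $y \notin f(I_k(f))$, from which one deduces $f^{-1}(y) \in (\Omega_k \cap N_{\alpha,d}) \setminus O_k(f)$; hence $g^{-1}(y) = \pi_k^{-1}(y) = f^{-1}(y)$ and $f_0(y) = y$. Since $\bigcup_j f(I_j(f))$ is finite, $f_0$ induces a finite permutation of $N_{\alpha,d}$, so $f_0 \in \F_{\alpha,d}$. Each $f_k$ is supported in $\Omega_k \cong \omega^{\alpha+1}+1$, and under this identification $g_k \in \HH_{\alpha,1} = \overline{\F}_{\alpha,1}$ by \Cref{thm:normal generators 1}; approximating $g_k$ by elements of $\F_{\alpha,1}$ and extending by the identity produces a sequence in $\F_{\alpha,d}$ converging to $f_k$ in the compact-open topology, so $f_k \in \overline{\F}_{\alpha,d}$. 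The main technical hurdle is the clump-decomposition construction of $g_k$, which crucially uses stability of $\Omega_k$ to realize an arbitrary permutation of $\Omega_k \cap N_{\alpha,d}$ as a homeomorphism fixing $\mu_k$; the hypothesis $\chi(f) = 0$ enters precisely as the cardinality equality that makes the extension of $\pi_k$ over $O_k(f)$ possible.
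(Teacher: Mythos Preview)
Your proof is correct and follows essentially the same strategy as the paper's: use the cardinality equality $|O_k(f)|=|I_k(f)|$ coming from $\chi(f)=0$ to build homeomorphisms $f_k$ supported in $\Omega_k$ that agree with $f$ on $N_{\alpha,d}$ up to a finite set, absorb the finite discrepancy into $f_0\in\F_{\alpha,d}$, and invoke \Cref{thm:normal generators 1}/\Cref{cor:max subgroup} to place each $f_k$ in $\overline{\F}_{\alpha,d}$. The only cosmetic difference is that the paper first pre-composes $f$ with a finite-support element $g\in\F_{\alpha,d}$ so that $g\circ f$ preserves each $\Omega_k\cap N_{\alpha,d}$ exactly and then matches, whereas you fold that correction directly into the definition of $\pi_k$ on $O_k(f)$; your version is slightly more streamlined and also spells out the clump decomposition that the paper leaves implicit.
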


\begin{proof}
Fix \( f \in \ker \chi \). 
As \( |O_k(f) | = | I_k(f) | \), we can choose \( g_k \in F \) such that \( g_k(f(O_k(f))) = f(I_k(f)) \),  \( g_k(f(I_k(f))) = f(O_k(f)) \), and \( g_k \) fixes every other element of \( N \). 
Let \( g = g_1 \circ \cdots \circ g_{d-1} \), so that \( (g\circ f) (\Omega_k \cap N) = \Omega_k \cap N \) for all \( k \). 
 
Choose \( f_k \in \PH \) supported in \( \Omega_k \) such that \( f_k \) induces the same permutation of \( \Omega_k \cap N \) as does \( g \circ f \). 
It follows that \[ h = g \circ f \circ f_d^{-1} \circ \cdots \circ f_{1}^{-1} \]
induces a trivial permutation on \( N \); in particular, \( h \in \F \). 
Setting \( f_0 = g^{-1} \circ h \) yields the factorization \( f = f_0 \circ f_1 \circ \cdots f_d \) with \( f_0 \in \F \) and \( f_k \) supported in \( \Omega_k \) for \( k > 0 \).
It is left to verify that each of the \( f_k \) is in \( \overline F \). 

Let \( G_k \) denote the subgroup of \(  \PH \) consisting of elements supported in \( \Omega_k \). 
Note that \( f_k \in G_k \). 
We can identify \( G_k \) with \( \Homeo(\Omega_k) \), and as \( \Omega_k \) is homeomorphic to \( \omega^{\alpha+1} + 1 \) by \Cref{lem:stable}, we can apply \Cref{cor:max subgroup} to see that \( G_k \cap \overline F = G_k \), allowing us to conclude that \( f_k \in \overline F \), as desired. 
\end{proof}

\begin{Thm}
\label{thm:ses}
	Let \( \alpha \) be an ordinal, and let \( d \in \bn \ssm\{1\} \). 
	
	\begin{enumerate}[(1)]
	
		\item There exists a homomorphism \( \chi \co \PH_{\alpha,d} \to \bz^{d-1} \) such that 
				\[
					1 \longrightarrow \overline \F_{\alpha,d} \lhook\joinrel\longrightarrow \PH_{\alpha,d} \overset{\chi}\longrightarrow \bz^{d-1} \longrightarrow 1
				\]
				is a split short exact sequence. 
				
			\item The short exact sequence 
				\[
					1 \longrightarrow  \PH_{\alpha,d} \lhook\joinrel\longrightarrow \HH_{\alpha,d} \longrightarrow \Sym(d) \longrightarrow 1
				\]
				is split.
				
	\end{enumerate}
	
	Moreover, all the homomorphisms---including the sections---in the above exact sequences are continuous when \( \HH_{\alpha,d} \), \( \overline \F_{\alpha,d} \), and \( \PH_{\alpha,d} \) are equipped with the compact-open topology and \( \mathbb Z^{d-1} \) and \( \Sym(d) \) are equipped with the discrete topology.
\end{Thm}

\begin{proof}
	For (1), we have already established the section to \( \chi \) in \Cref{lem:section}; in \Cref{lem:in kernel}, we showed that \( \overline \F \) is contained in \( \ker \chi \); and in \Cref{lem:factorization}, we showed that \( \ker \chi \) is contained in \( \overline F \). 
	We have therefore established (1). 
	% It is left to show that \( \ker \chi \) is in \( \overline \F \). 
	% Let \( f \in \ker \chi \).
	% Then \Cref{lem:factorization} implies \( f = f_0 \circ f_1 \circ \cdots \circ f_k \) with \( f_0 \in \F \) and \( f_k \) supported in \( \Omega_k \). 
	% Let \( G_k \) denote the subgroup of \(  \PH \) consisting of elements supported in \( \Omega_k \). 
	% We can identify \( G_k \) with \( \Homeo(\Omega_k) \), and as \( \Omega_k \) is homeomorphic to \( \omega^{\alpha+1} + 1 \) by \Cref{lem:stable}, we can apply \Cref{cor:max subgroup} to see that \( G_k \cap \overline \F = G_k \), allowing us to conclude that \( f_k \in \overline \F \), as desired.
	% This establishes (1). 
	
	We move to the second sequence, which is exact by definition; we must show that it is split.
	Let \( J_1 = \{ x \in X_{\alpha,d}: x \leq \mu_1 \} \) and, for \( k \in \{2, \ldots, d\} \), let \( J_k = \{ x \in X_{\alpha,d} : \mu_{k-1} < x \leq \mu_k \} \). 
	Given \( j,k \in \{ 1, \ldots, d\} \), there is a unique order-preserving bijection \( g_{j,k} \co J_j \to J_k \). 
	Using our enumeration of \( M \), identify \( \Sym(M) \) with \( \Sym(d) \), and let \( \Phi\co \HH \to \Sym(d) \) be the homomorphism corresponding to the action of \( \HH \) on \( M \), so \( \ker\Phi = \PH \). 
	Taking \( \sigma \in \Sym(M) \), define \( \Psi(\sigma) \in \HH \) to be the homeomorphism  given by
	\[
		\Psi(\sigma)(x) =  g_{k,\sigma(k)}(x),
	\]
	whenever \( x \in J_k \). 
	Then \( \Psi \) is a section of \( \Phi \), establishing (2).

	Now to finish, we must establish the continuity of the homomorphisms in (1) and (2). 
	The inclusions are continuous by definition;
	therefore, it is only left to verify the continuity of \( \chi \) and \( \Phi \).
	Recall that (i) a homomorphism from a topological group to a discrete group is continuous if and only if the kernel is open, and (ii) a subgroup of a topological group is open if and only if it contains a neighborhood of the identity.
	By (1), the kernel of \( \chi \) is \( \overline \F \), which is readily verified to be open.
	Indeed, \( \overline \F \) contains the open neighborhood of the identity containing the elements that stabilize each of the \( \Omega_k \) setwise. 
	And, as \( \PH \) contains \( \overline \F \), which is open, \( \PH \) is open itself, implying \( \Phi \) is continuous.
\end{proof}

Before continuing, let us come back to noting that the definition of \( \chi \) did not depend on the choice of the \( \Omega_k \).
Indeed, let \( \Omega_1', \ldots, \Omega_k' \) be another such partition and let \( \chi' \) be the resulting homomorphism. 
Then there exists \( h \in \PH \) such that \( h(\Omega_k) = \Omega_k' \) for all \( k \). 
It follows that \( \chi' = \chi \circ i_h \), where \( i_h \)  denotes conjugation by \( h \). 
As the codomain is abelian, we deduce that \( \chi' = \chi \). 

\subsection{Properties of \( \overline \F_{0,d} \)}
\label{sec:F0}

Given \Cref{thm:ses}, it is clear that we need to understand the structure of \( \overline \F \) in order to understand the structure of \( \PH \) and \( \HH \).
Unfortunately, the subgroup \( \F \) gets in the way of our strategies.
We have the tools to investigate the quotient \( \overline \F_{\alpha,d} / \F_{\alpha,d} \), but unfortunately this quotient is independent of \( \alpha \); in particular, this quotient group is isomorphic to \( \overline \F_{0,d} \). 
Because of this, we now limit our focus to \( \overline \F_{0,d} \).

The key idea is to use fragmentation to reduce statements about \( \overline \F_{0,d} \) to statements about \( \Homeo(\omega) \). 
Using \Cref{lem:in kernel} together with \Cref{lem:factorization} and the fact that the choices of the \( \Omega_k \) above were arbitrary, we obtain the following version of \Cref{lem:factorization} that is well suited to our purposes.

\begin{Lem}[Fragmentation in \( \overline \F_{\alpha,d} \)]
\label{lem:factorization2}
Let \( \alpha \) be an ordinal, and let \( d \in \bn \). 
If \( f \in \overline \F_{\alpha,d} \), then for any pairwise-disjoint clopen neighborhoods \( U_1, \ldots, U_d \) of \( \mu_1, \ldots, \mu_d \), respectively, there exist homeomorphisms \( f_0, f_1, \ldots, f_d \) such that 
\begin{itemize}
\item \( f_0 \in \F_{\alpha,d} \),
\item \( f_i \) is supported in \( U_i \) for \( i \in \{1, \ldots, d\} \), and
\item \( f = f_0 \circ f_1 \circ \cdots \circ f_d \). 
\qed
\end{itemize}
\end{Lem}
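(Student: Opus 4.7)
The plan is to leverage the density of \( \F_{\alpha,d} \) in \( \overline \F_{\alpha,d} \)---the same principle driving the proof of \Cref{lem:in kernel}---to reduce the problem to decomposing a homeomorphism that preserves the clopen partition \( \{U_1, \ldots, U_d, V\} \) of \( X_{\alpha,d} \), where \( V := X_{\alpha,d} \ssm (U_1 \cup \cdots \cup U_d) \). First, observing that \( \bigcap_{k=1}^d \{ h \in \HH_{\alpha,d} : h(U_k) = f(U_k) \} \) is an open neighborhood of \( f \) in the permutation topology, I would choose some \( g \) from its intersection with \( \F_{\alpha,d} \). The homeomorphism \( h := g^{-1} \circ f \) then satisfies \( h(U_k) = U_k \) for every \( k \), and consequently \( h(V) = V \).

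Because \( h \) preserves each member of the pairwise-disjoint clopen partition \( \{U_1, \ldots, U_d, V\} \), it factors uniquely as a commuting product \( h = f_1 \circ \cdots \circ f_d \circ f_V \), where \( f_i \) agrees with \( h \) on \( U_i \) and is the identity off \( U_i \), and \( f_V \) agrees with \( h \) on \( V \) and is the identity off \( V \). Setting \( f_0 := g \circ f_V \) yields the factorization \( f = g \circ h = f_0 \circ f_1 \circ \cdots \circ f_d \). Since \( \F_{\alpha,d} \) is a subgroup containing \( g \), what remains is to show \( f_V \in \F_{\alpha,d} \).

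The main remaining point---and the step requiring the most care---is verifying that \( V \cap N_{\alpha,d} \) is finite; once this is in hand, \( f_V \), being supported in \( V \), induces only a finite permutation of \( N_{\alpha,d} \) and so lies in \( \F_{\alpha,d} \). Since \( V \) is clopen in \( X_{\alpha,d} \), the rank of each of its points computed in \( V \) coincides with its rank computed in \( X_{\alpha,d} \); and since \( V \) misses every maximal rank point \( \mu_k \), every point of \( V \) has rank at most \( \alpha + 1 \). The compactness of \( V \) together with \Cref{thm:classification} then forces \( V \) to be either empty or homeomorphic to some \( \omega^\beta \cdot n + 1 \) with \( \beta \leq \alpha \) and \( n \in \bn \), and in either case \( V \) contains only finitely many points of rank \( \alpha + 1 \), completing the argument.
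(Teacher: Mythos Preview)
Your argument is correct and takes a genuinely different route from the paper's. The paper deduces this lemma from \Cref{lem:in kernel} and \Cref{lem:factorization}: first \( f \in \overline\F_{\alpha,d} \subset \ker\chi \), and then the proof of \Cref{lem:factorization} (with the arbitrary \( \Omega_k \) replaced by \( U_k \)) uses the equality \( |O_k(f)| = |I_k(f)| \) to build a finitary correction \( g \in \F_{\alpha,d} \) so that \( g\circ f \) preserves each \( U_k \cap N_{\alpha,d} \) setwise; one then \emph{chooses} \( f_k \) supported in \( U_k \) realizing the induced permutation of \( U_k \cap N_{\alpha,d} \), and the residual lies in \( \F_{\alpha,d} \) because it fixes \( N_{\alpha,d} \) pointwise.

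Your approach bypasses \( \chi \) entirely. By using the permutation-topology density of \( \F_{\alpha,d} \) (exactly the trick from \Cref{lem:in kernel}) you arrange that \( g^{-1}\circ f \) preserves each \( U_k \) \emph{as a set}, not merely \( U_k \cap N_{\alpha,d} \); the \( f_i \) are then literal restrictions rather than chosen lifts, and the leftover piece \( f_V \) is handled by the clean compactness observation that \( V \cap N_{\alpha,d} \) is finite. This is more elementary and self-contained. The paper's route, on the other hand, actually proves the stronger statement that fragmentation holds for all of \( \ker\chi \) (\Cref{lem:factorization}), which is what is needed to establish \( \ker\chi \subset \overline\F_{\alpha,d} \) in \Cref{thm:ses}; your argument, starting as it does from density in \( \overline\F_{\alpha,d} \), does not directly yield that.
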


We stated the fragmentation lemma for general \( \alpha \), but it is particularly useful for \( \alpha =0 \), as in this case \( \F_{0,d} \) is the subgroup of finitely supported homeomorphisms. 

\begin{Lem}
\label{lem:local isom}
Let \( \alpha \) be an ordinal, and let \( d \in \bn \).
If \( U \) is a clopen neighborhood of \( \mu \in M \) such that \( U \cap M = \{\mu\} \), then the subgroup of \( \overline \F_{\alpha,d} \) consisting of homeomorphisms supported in \( U \) is isomorphic to \( \Homeo(\omega^{\alpha+1}) \). 
\end{Lem}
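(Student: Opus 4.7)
The plan is as follows. Let $G_U \leq \HH_{\alpha,d}$ denote the subgroup of homeomorphisms whose support lies in $U$. I will establish two separate claims: (a) $G_U \cong \Homeo(\omega^{\alpha+1})$; and (b) $G_U \subseteq \overline{\F}_{\alpha,d}$. Together these yield the lemma, since the subgroup of $\overline{\F}_{\alpha,d}$ supported in $U$ is exactly $G_U \cap \overline{\F}_{\alpha,d}$.

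For (a), I would apply \Cref{lem:stable}: since $\mu$ has rank $\alpha + 2$ and $U \cap M_{\alpha,d} = \{\mu\}$, every other element of $U$ has rank strictly less than $\alpha + 2$, so $U$ is homeomorphic to $\omega^{\alpha+1} + 1$. Because $U$ is clopen in $X_{\alpha,d}$, the map that extends a homeomorphism of $U$ by the identity outside of $U$ defines a group isomorphism $\Homeo(U) \to G_U$, with restriction as its inverse. Combined with \Cref{lem:isomorphism}, this gives $G_U \cong \Homeo(\omega^{\alpha+1}+1) \cong \Homeo(\omega^{\alpha+1})$.

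For (b), I would exploit the identity $\HH_{\alpha,1} = \overline{\F}_{\alpha,1}$ recorded in the introduction as a consequence of \Cref{thm:normal generators 1}. Given $h \in G_U$, view $h|_U$ as an element of $\HH_{\alpha,1}$ under the identification from (a); then $h|_U$ is the limit, in the permutation topology on $\Homeo(U)$, of a sequence $\{f_n\}$ in $\F_{\alpha,1}$. Extending each $f_n$ by the identity yields $\tilde f_n \in \HH_{\alpha,d}$ inducing a finite permutation on $N_{\alpha,d}$ (being trivial on $N_{\alpha,d} \smallsetminus U$ and equal to the finite permutation induced by $f_n$ on $U \cap N_{\alpha,d}$), and so $\tilde f_n \in \F_{\alpha,d}$. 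The main---and essentially only---technical point is verifying that $\tilde f_n \to h$ in $\HH_{\alpha,d}$: for any clopen $A \subseteq X_{\alpha,d}$, the clopenness of $U$ lets me write $A = (A \cap U) \sqcup (A \smallsetminus U)$ with both pieces clopen, and because $\tilde f_n$ and $h$ both fix $X_{\alpha,d} \smallsetminus U$ pointwise, the condition $\tilde f_n(A) = h(A)$ reduces to $f_n(A \cap U) = h|_U(A \cap U)$, which holds eventually by convergence of $f_n \to h|_U$ in the permutation topology on $\Homeo(U)$. This places $h$ in $\overline{\F}_{\alpha,d}$, completing (b).
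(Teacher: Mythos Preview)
Your proof is correct, and the structure is clean: establishing $G_U \cong \Homeo(\omega^{\alpha+1})$ via \Cref{lem:stable} and \Cref{lem:isomorphism}, and then separately showing $G_U \subseteq \overline{\F}_{\alpha,d}$ by transporting the identity $\HH_{\alpha,1} = \overline{\F}_{\alpha,1}$ through the clopen inclusion $U \hookrightarrow X_{\alpha,d}$ and checking that convergence in the permutation topology is preserved under extension by the identity.

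The paper takes a somewhat different route for the containment $G_U \subseteq \overline{\F}_{\alpha,d}$. Rather than a direct approximation argument, it observes that $G := \overline{\F}_{\alpha,d} \cap G_U$ is a \emph{normal} subgroup of $G_U \cong \Homeo(\omega^{\alpha+1})$ (since $\overline{\F}_{\alpha,d}$ is normal in $\HH_{\alpha,d}$), asserts that $G$ contains an element inducing an infinite permutation on $N_{\alpha,d}$, and then invokes \Cref{thm:normal generators 1} to conclude $G = G_U$. Both arguments ultimately rest on \Cref{thm:normal generators 1}---yours via its corollary $\HH_{\alpha,1} = \overline{\F}_{\alpha,1}$, the paper's via normal generation directly---but the paper's argument is algebraic while yours is topological. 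Your version has the minor advantage of being fully explicit: the paper leaves unstated why $G$ contains an element inducing an infinite permutation on $N_{\alpha,d}$ (one must, e.g., exhibit a locally finite infinite product of finite swaps supported in $U$), whereas your approximation argument sidesteps this by establishing the full containment at once.
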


\begin{proof}
Let \( G \) denote the subgroup of \( \overline \F_{\alpha,d} \) consisting of elements supported in \( U \). 
We can then identify \( G \) with a subgroup of \( \Homeo(U) \). 
By \Cref{lem:stable}, \( U \) is homeomorphic to \( X_{\alpha,1} \), and hence as  \( G \) contains an element inducing an infinite permutation on the set \( N_{\alpha,d} \), Theorem~\ref{thm:normal generators 1} implies \( G \) is isomorphic to \( \Homeo(\omega^{\alpha+1}) \). 
\end{proof}

\Cref{lem:factorization2} and \Cref{lem:local isom} imply that \( \overline \F_{\alpha,d} / \F_{\alpha,d} \) can be realized as a quotient of \( \Homeo(\omega^{\alpha+1})^d \), and hence this quotient is strongly distorted and uniformly perfect. 
In the case of uniform perfectness, the difficulty is then understanding whether we can express the elements of \( \F_{\alpha,d} \) as products of commutators of elements in \( \overline \F_{\alpha,d} \).
Unfortunately, we cannot answer this question in general, but we can do so for \( \alpha = 0 \). 

\begin{Thm}
\label{thm:perfect F}
If \( d \in \bn\ssm \{1\} \), then \( \overline \F_{0,d} \) is uniformly perfect of commutator width at most four. 
\end{Thm}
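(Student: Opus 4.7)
The plan is to combine the fragmentation \Cref{lem:factorization2} with the uniform perfectness of $\Homeo(\omega)$ (\Cref{thm:mainthm2}) and to handle the finitely-supported remainder by an Anderson-style translation trick. Let $X = X_{0,d} = \omega\cdot d + 1$ and partition $X$ into clopen pieces $U_1, \ldots, U_d$ with $\mu_k \in U_k$ and $U_k \cong \omega + 1$ (for instance $U_1 = [0,\mu_1]$ and $U_k = (\mu_{k-1}, \mu_k]$ for $k \ge 2$). Applying \Cref{lem:factorization2} to $f \in \overline\F_{0,d}$ yields a decomposition $f = f_0 \circ f_1 \circ \cdots \circ f_d$ in which $f_0 \in \F_{0,d}$ is finitely supported (since $\alpha = 0$) and each $f_k$ is supported in $U_k$. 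The goal is then to express $f_1 \circ \cdots \circ f_d$ as three commutators and $f_0$ as one, giving a total of four.

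By \Cref{lem:local isom}, the subgroup $G_k \leq \overline\F_{0,d}$ of elements supported in $U_k$ is isomorphic to $\Homeo(\omega)$, which has commutator width at most three by \Cref{thm:mainthm2}. Writing each $f_k$ as a product of three commutators of elements of $G_k$, the pairwise-disjoint supports across distinct indices guarantee that any two elements coming from different $G_k$'s commute, and the elementary identity
\[
    [x_1 x_2 \cdots x_d,\, y_1 y_2 \cdots y_d] = \prod_{k=1}^d [x_k, y_k]
\]
(valid whenever $[x_j, y_k] = 1$ for $j \neq k$) applied three times in parallel collapses the $3d$ commutators into three, yielding $f_1 \circ \cdots \circ f_d$ as a product of three commutators in $\overline\F_{0,d}$.

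For $f_0$, set $S = \supp(f_0)$, a finite set disjoint from $M_{0,d}$, and choose a clopen $A \subset X$ with $S \cup \{\mu_1\} \subset A$ and $\mu_1$ as the unique non-isolated point of $A$, so $A \cong \omega + 1$ (for example take $A = U_1 \cup (S \ssm U_1)$). Via any bijection $A \ssm \{\mu_1\} \to \bz$, construct $\tau \in \Homeo(X)$ supported in $A$ whose restriction to $A \ssm \{\mu_1\}$ is conjugate to the shift on $\bz$; then $\{\tau^n(S)\}_{n \in \bz}$ is pairwise disjoint and locally finite in $X$. Set $\sigma = \prod_{n \ge 0} \tau^n f_0 \tau^{-n}$, a convergent product whose limit lies in $\overline\F_{0,d}$ because that group is closed in $\HH_{0,d}$. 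The telescoping identity $\sigma = f_0 \cdot (\tau \sigma \tau^{-1})$ gives $f_0 = [\sigma, \tau]$, and combining with the three commutators for $f_1 \circ \cdots \circ f_d$ expresses $f$ as a product of four commutators. The main obstacle is confirming that $\tau$ lies in $\overline\F_{0,d}$ rather than merely in $\HH_{0,d}$; this is handled using \Cref{thm:ses}, which identifies $\overline\F_{0,d}$ with $\ker \chi_{0,d}$, together with the observation following that theorem that $\chi_{0,d}$ is independent of the clopen sets $\Omega_k$ used in its definition, so one may take $\Omega_1 = A$ and the remaining $\Omega_k$'s to be clopen neighborhoods of the $\mu_k$'s disjoint from $A$, whereupon $\tau$ preserves each $\Omega_k$ setwise and $\chi_{0,d}(\tau) = 0$.
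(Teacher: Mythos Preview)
Your overall architecture matches the paper exactly: fragment via \Cref{lem:factorization2}, collapse $f_1\circ\cdots\circ f_d$ into three commutators using disjoint supports and \Cref{thm:mainthm2}, then exhibit $f_0$ as a single commutator. The three-commutator step is fine.

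The genuine gap is in your treatment of $f_0$. If $\tau$ is conjugate to the shift on $\bz$, then $\tau$ acts \emph{transitively} on $A\ssm\{\mu_1\}$: every point lies in the single $\tau$-orbit. Consequently, whenever $|S|>1$, the sets $\{\tau^n(S)\}_{n\in\bz}$ are \emph{never} pairwise disjoint---if $x,y\in S$ are distinct then $y=\tau^n(x)$ for some $n\neq 0$, so $y\in S\cap\tau^n(S)$. The infinite product $\sigma=\prod_{n\geq 0}\tau^n f_0\tau^{-n}$ is therefore not well-defined (each point of $A\ssm\{\mu_1\}$ is moved by infinitely many factors), and the telescoping identity $f_0=[\sigma,\tau]$ collapses with it. You need a translation that genuinely displaces $S$ off itself, i.e., a convergent $B$-translation for some clopen $B\supset S$ in the sense of \Cref{lem:translation}, not a transitive shift.

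The paper's remedy is shorter: since $\supp(f_0)$ is finite and disjoint from $M$, a finitely supported element of $\F_{0,d}$ conjugates $f_0$ to have support inside $U_1$; then \Cref{lem:commutator} (applied inside $G_1\cong\Homeo(\omega)$ via \Cref{lem:local isom}) expresses it as a single commutator of elements supported in $U_1$. This also renders your closing paragraph unnecessary: \Cref{lem:local isom} already says that \emph{every} homeomorphism supported in a clopen set meeting $M$ in a single point lies in $\overline\F_{0,d}$, so the membership of $\tau$ (or of the commutator factors) needs no appeal to $\chi$.
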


\begin{proof}
Fix \( f \in \overline \F_{0,d} \). 
Let \( U_1, \ldots, U_d \) be pairwise-disjoint clopen neighborhoods of \( \mu_1, \ldots, \mu_d \), respectively.
By \Cref{lem:factorization2}, we can write \( f= f_0 \circ f_1 \circ \cdots \circ f_d \) with \( f_0 \in \F \) and \( f_i \) supported in \( U_i \) for \( i > 0 \). 
Combining \Cref{lem:local isom} and \Cref{thm:mainthm2}, we can write \( f_i = [g_{i,1}, h_{i,1}][g_{i,2},h_{i,2}][g_{i,3},h_{i,3}] \) with \( g_{i,j} \) and \( h_{i,j} \) supported in \( U_i \) for \( i > 1 \).
Observe that if group elements \( a \), \( b \), \( c \), and \( d \) are such that \( a \) and \( b \) both commute with each of \( c \) and \( d \), then \( [a,b][c,d] = [ac,bd] \). 
It follows that \( f_1 \circ \cdots \circ f_d \)  can be expressed a product of three commutators. 
To finish, observe that as the support of \( f_0 \) is finite, we can conjugate \( f_0 \) to have support in \( U _1 \), allowing us to apply \Cref{lem:commutator} to express \( f_0 \) as a commutator.
Therefore, \( f \) is a product of four commutators, as desired. 
\end{proof}

We now turn to classifying the normal generators of \( \overline \F_{0,d} \). 
Given \( Z \subset M_{0,d} \), let \( \F_Z \) denote the subgroup of \( \overline \F_{0,d} \) consisting of homeomorphisms that restrict to the identity in an open neighborhood of each point of \( Z \).
Given a subgroup \( \Gamma \) of \( \overline \F_{0,d} \), we let \( Z_\Gamma \) be the subset of \( M_{0,d} \) consisting of the \( \mu \in M_{0,d} \) such that for each \( g \in \Gamma \) there is an open neighborhood \( U \) of \( \mu \) that is fixed pointwise by \( g \), that is, \( Z_\Gamma \) is the largest subset of \( M_{0,d} \) satisfying \( \F_{Z_\Gamma} < \Gamma \). 
For \( f \in \overline \F_{0,d} \), we write \( Z_f \) to denote \( Z_{\langle f \rangle} \).

\begin{Thm}
\label{thm:normal generators 2}
Let \( d \in \bn \ssm\{1\} \), and let \( \Gamma \) be a normal subgroup of \( \overline \F_{0,d} \).
If \( \Gamma \) is not contained in \( \F_{0,d} \), then  \( \F_{Z_\Gamma} = \Gamma \).
Moreover, (1) there exists \( \gamma \in \Gamma \) such that \( Z_\gamma = Z_\Gamma \), and (2) if \( \gamma \in \Gamma \) is such that \( Z_\gamma = Z_\Gamma \), then \( \gamma \) uniformly normally generates \( \Gamma \). 
\end{Thm}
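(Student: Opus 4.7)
The plan is to reduce the three assertions to a single \emph{uniform claim}: for any \( \gamma \in \overline\F_{0,d} \) with \( Z_\gamma \subsetneq M_{0,d} \), the normal closure of \( \gamma \) in \( \overline\F_{0,d} \) equals \( \F_{Z_\gamma} \), with \( \gamma \)-width bounded by a constant depending only on \( d \). Granting this, the main claim and clauses (1)--(2) follow together. First observe that \( Z_\Gamma \subsetneq M_{0,d} \), since otherwise every element of \( \Gamma \) would be finitely supported, contradicting \( \Gamma \not\subseteq \F_{0,d} \). For each \( \mu_i \notin Z_\Gamma \), pick \( g_i \in \Gamma \) with \( \mu_i \notin Z_{g_i} \) and apply the commutator construction described below to produce \( c_i \in \Gamma \) supported in a clopen neighborhood \( U_i \) of \( \mu_i \) with \( U_i \cap M_{0,d} = \{\mu_i\} \), the \( U_i \) chosen pairwise disjoint, such that \( c_i \) fixes no neighborhood of \( \mu_i \); setting \( \gamma = \prod c_i \in \Gamma \), disjointness of supports gives \( Z_\gamma = Z_\Gamma \), establishing clause (1). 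The uniform claim applied to this \( \gamma \) shows that its normal closure in \( \overline\F_{0,d} \) is \( \F_{Z_\Gamma} \); this normal closure lies in \( \Gamma \) by normality, and the reverse containment \( \Gamma \subseteq \F_{Z_\Gamma} \) is immediate from the definition of \( Z_\Gamma \), giving \( \Gamma = \F_{Z_\Gamma} \). Clause (2) then follows by applying the uniform claim to any \( \gamma \in \Gamma \) with \( Z_\gamma = Z_\Gamma \).

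To prove the uniform claim, I would fragment an arbitrary \( f \in \F_{Z_\gamma} \) via \Cref{lem:factorization2}, choosing pairwise-disjoint clopen neighborhoods \( U_1, \ldots, U_d \) of \( \mu_1, \ldots, \mu_d \) so that, whenever \( \mu_i \in Z_\gamma \), \( U_i \) lies inside a neighborhood of \( \mu_i \) on which \( f \) is the identity. This yields \( f = f_0 \circ f_1 \circ \cdots \circ f_d \) with \( f_0 \in \F_{0,d} \), each \( f_i \) supported in \( U_i \), and \( f_i = \mathrm{id} \) for \( \mu_i \in Z_\gamma \); by \Cref{lem:local isom}, for each \( \mu_i \notin Z_\gamma \) the subgroup \( G_i \) of \( \overline\F_{0,d} \) consisting of elements supported in \( U_i \) is isomorphic to \( \Homeo(\omega) \), and \( f_i \in G_i \). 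Since \( f_0 \in \F_{0,d} \) is finitely supported, a further conjugation by an element of \( \F_{0,d} \) moves its support into some \( U_j \) with \( \mu_j \notin Z_\gamma \), reducing it to an element of \( G_j \) as well.

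The main technical step is to produce, for each \( \mu_i \notin Z_\gamma \), a uniform normal generator of \( G_i \) inside the normal closure of \( \gamma \). Since \( \gamma \in \PH_{0,d} \) fixes \( \mu_i \) but fixes no neighborhood of \( \mu_i \), it moves infinitely many rank-one points accumulating at \( \mu_i \). I would choose a small clopen neighborhood \( V_i \subset U_i \) of \( \mu_i \) with \( \gamma(V_i) \subset U_i \) and construct an involution \( h_i \in \overline\F_{0,d} \) supported in \( V_i \) that swaps carefully-selected pairs of rank-one points; the commutator \( c_i := [\gamma, h_i] \) is then supported in \( V_i \cup \gamma(V_i) \subset U_i \), so \( c_i \in G_i \). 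The main obstacle is choosing the swap-pairs so that \( c_i \) acts as an infinite permutation on the rank-one points of \( U_i \); this is arranged by selecting the pairs inductively so that their \( \gamma \)- and \( \gamma^{-1} \)-orbits remain pairwise disjoint and avoid the supports of previously-chosen pairs, using that \( \gamma \) moves infinitely many rank-one points near \( \mu_i \). With \( c_i \) in hand, \Cref{thm:normal generators 1} applied to \( G_i \) shows that \( c_i \) uniformly normally generates \( G_i \) with width at most twelve, and the identity \( g c_i g^{-1} = \gamma^g \cdot (\gamma^{-1})^{g h_i} \) (obtained by expanding and regrouping) expresses each \( c_i \)-conjugate as a product of two \( \gamma^{\pm 1} \)-conjugates. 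Assembling the fragments therefore writes any \( f \in \F_{Z_\gamma} \) as a product of at most \( 24(d+1) \) conjugates of \( \gamma^{\pm 1} \), completing the uniform claim.
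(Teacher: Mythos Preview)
Your proposal is correct and follows essentially the same approach as the paper: fragment \( f \in \F_{Z_\gamma} \) via \Cref{lem:factorization2}, produce for each \( \mu_i \notin Z_\gamma \) a commutator \( c_i \) in the normal closure of \( \gamma \) that is supported near \( \mu_i \) and induces an infinite permutation there, invoke \Cref{thm:normal generators 1} to normally generate each local group \( G_i \), and absorb the finitely supported piece \( f_0 \) by conjugating it into one of the \( U_i \). The only technical difference is in how the local commutator is built: the paper first fragments \( g \) as \( g_0 g_1 \cdots g_d \), applies \Cref{lem:displacement} to the local piece \( g_k \) to obtain a displaced set \( A \), and forms \( [\tau,g] = [\tau,g_0 g_k] \); you instead use continuity of \( \gamma \) at \( \mu_i \) to choose \( V_i \) with \( \gamma(V_i)\subset U_i \), so that \( [\gamma,h_i] \) is automatically supported in \( U_i \), and then select swap-pairs by hand. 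Your route avoids the preliminary fragmentation of \( \gamma \) at the cost of redoing the displacement argument inductively; both yield the same uniform width bound up to constants.
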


\begin{proof}
By assumption, \( \Gamma \) is a normal subgroup of \( \overline \F \) not contained in \( \F \), and therefore, \( Z_\Gamma \neq M \). 
As the elements of \( N = N_{0,d} \) are isolated, it follows that \( \Gamma \leq F_{Z_\Gamma} \), so we need only show that \( F_{Z_\Gamma} \leq \Gamma \). 

First, we claim that if \( \mu \) belongs to \( M \) but not \( Z_\Gamma \), then there exists a clopen neighborhood \( U_\mu \) of \( \mu \) and an element \( g_\mu \) in \( \Gamma \) supported in \( U_\mu \) such that \( U_\mu \cap M = \{\mu\} \) and such that \( g_\mu \) induces an infinite permutation on \( N \cap U_\mu \). 

Indeed, fix \( \mu \in M \ssm Z_\Gamma \), and choose \( g \in \Gamma \) such that \( g \) induces a nontrivial permutation on \( U \cap N \) for every open neighborhood \( U \) of \( \mu \). 
By continuity, we can choose pairwise-disjoint clopen neighborhoods \( U_1, \ldots, U_d \) of \( \mu_1, \ldots, \mu_d \), respectively, such that \( g(U_i) \cap U_j = \varnothing \) whenever \( j \neq k \). 
Applying \Cref{lem:factorization2} to \( g \) allows us to write \( g = g_0 \circ g_1 \circ \cdots \circ g_d \) with \( g_0 \in \F \) and  with \( g_i \in \overline \F \) supported in \( U_i \).

Let \( k \in \{1, \ldots, d\} \) be such that \( \mu = \mu_k \). 
By \Cref{lem:displacement}, there exists an infinite subset \( A \) of \( N \cap U_k \) such that \( g_k(A) \cap A = \varnothing \).
By shrinking \( A \) we may assume that \( A \) is disjoint from the support of \( g_0 \). 
Let \( \tau \) be the involution such that \( \tau|_A = g_k|_A \), \( \tau|_{g_k(A)} = g_k^{-1}|_{g_k(A)} \),  and such that \( \tau \) is the identity on the complement of \( A \cup g_k(A) \). 
It follows that \( [\tau, g_0g_k] \)  induces an infinite permutation on \( U_k \cap N \), and 
\begin{align*}
	[\tau, g]	&= \tau \circ g \circ \tau^{-1} \circ g^{-1}  \\
			&= g_0^\tau \circ g_k^\tau \circ g_k^{-1} \circ g_0^{-1} \\
			&= [\tau, g_0g_k] . 
\end{align*}
The first equality implies that \( [\tau,g] \in \Gamma \), and the second equality is deduced from the fact that \( \tau \) commutes with each \( g_i \) for \( i > 0 \) and \( i \neq k \). 
As \( \tau \) is supported in \( U_k \), it follows from the second equality that \( [\tau,g] \) restricts to the identity on the complement of \( g_0(U_k) \), i.e., \( [\tau,g] \) is supported in \( g_0(U_k) \). 
Setting \( g_\mu = [\tau, g] \) and \( U_\mu = g_0(U_k) \), establishes the claim.

Now, let \( G_\mu \) denote the subgroup of \( \overline \F \) consisting of elements supported in \( U_\mu \). 
\Cref{lem:local isom} implies that \( G_\mu \) is isomorphic to \( \Homeo(\omega^{\alpha+1}) \). 
In particular, \Cref{thm:normal generators 1} implies that \( g_\mu \) uniformly normally generates \( G_\mu \). 
Moreover, as \( g_\mu \) belongs to \(\Gamma \) and \( \Gamma \) is normal, we have that \( G_\mu \) is a subgroup of \( \Gamma \).

Next, fix \( f \in \F_{Z_\Gamma} \). 
Again applying \Cref{lem:factorization2}, we can write \( f = f_0 \circ \prod_{\mu \in M \ssm Z_\Gamma} f_\mu \) with \( f_0 \in \F \) and each \( f_\mu \in \overline \F \) supported in \( U_\mu \).
It follows that there exists \( w \in \overline \F_{0,d} \) such that \( f \circ w = f_0 \in \F \) and such that \( w \) can be expressed as a product of \( 12 |M \ssm Z_\Gamma| \) elements from the set \( \{ g_\mu, g_\mu^{-1} :  \mu \in M \ssm Z_\Gamma \} \) and their conjugates (the number twelve comes from the proof of \Cref{thm:normal generators 1}). 
In particular, \( w \) belongs to \( \Gamma \). 
It is left to check that \( f_0 \) is an element of \( \Gamma \). 
As \( f_0 \in \F_{0,d} \) has finite support, there is a conjugate of \( f_0 \) supported in \( U_{\mu_1} \). 
\Cref{prop:anderson} implies that \( f_0 \) can be written as a product of four conjugates of \( g_{\mu_1} \) and \( g_{\mu_1}^{-1} \).
Therefore, \( F_{Z_\Gamma} \) is a subgroup of \( \Gamma \), as desired.

To finish, let \( \gamma \in \Gamma \) be such that \( Z_\gamma = Z_\Gamma \).  
First note that such a \( \gamma \) exists: the product of the \( g_\mu \) constructed above is such a \( \gamma \). 
We can now run the entirety of the argument above with \( g = \gamma \).
Then, by construction, each of the \( g_\mu \) is a product of a conjugate of \( \gamma \) and \( \gamma^{-1} \); the result follows. 
\end{proof}

We can now readily deduce a classification of the normal generators of \( \overline \F_{0,d} \). 

\begin{Thm}[Normal generators for \( \overline \F_{0,d} \)]
\label{thm:normal generators F}
Let \( d \in \bn \). 
For \( f \in \overline\F_{0,d} \), the following are equivalent:
\begin{enumerate}[(i)]
	\item \( f \) normally generates \( \overline\F_{0,d} \).
	\item \( f \) uniformly normally generates \( \overline\F_{0,d} \).
	\item \( Z_f = \varnothing \). 
	\item \( f \) induces an infinite permutation of the set \( \{ \omega\cdot k + \ell : \ell \in \bn \} \) for each \( k \in d \). 
\end{enumerate}
\end{Thm}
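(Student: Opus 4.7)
The plan is to prove the cycle (ii) $\Rightarrow$ (i) $\Rightarrow$ (iv) $\Leftrightarrow$ (iii) $\Rightarrow$ (ii), with essentially all of the real work pushed into \Cref{thm:normal generators 2}. The implication (ii) $\Rightarrow$ (i) is immediate. For (iii) $\Leftrightarrow$ (iv), I would make the translation explicit: a point $\mu_{k+1}$ lies in $Z_f$ exactly when $f$ fixes a clopen neighborhood of $\mu_{k+1}$ pointwise (since $\langle f \rangle$ is cyclic, this amounts to $f$ alone fixing such a neighborhood), which in turn is equivalent to $f$ fixing a cofinite subset of the set of isolated points $\{\omega\cdot k + \ell : \ell \in \bn\}$ accumulating at $\mu_{k+1}$; since $f \in \overline\F_{0,d} \subseteq \PH_{0,d}$ fixes $\mu_{k+1}$ automatically, this is exactly the negation of inducing an infinite permutation on that set. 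Running this over all $k \in d$ establishes the equivalence.

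For (i) $\Rightarrow$ (iv), I would prove the contrapositive. Suppose (iv) fails, so there is some $k \in d$ with $\mu_{k+1} \in Z_f$, meaning $f$ is the identity on some clopen neighborhood $V$ of $\mu_{k+1}$. For any $g \in \overline\F_{0,d}$, the element $gfg^{-1}$ is the identity on $g(V)$, which is again a clopen neighborhood of $\mu_{k+1}$ because $g$ fixes $\mu_{k+1}$. A finite product of such conjugates (and their inverses) is still the identity on the finite intersection of such neighborhoods, which remains a neighborhood of $\mu_{k+1}$. Hence the normal closure of $f$ is contained in $\F_{\{\mu_{k+1}\}}$, a proper subgroup of $\overline\F_{0,d}$ (it is straightforward to exhibit elements of $\overline\F_{0,d}$ that move arbitrarily small isolated points near $\mu_{k+1}$), so $f$ does not normally generate.

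The substantive step is (iii) $\Rightarrow$ (ii), where I would invoke \Cref{thm:normal generators 2}. Let $\Gamma$ be the normal closure of $f$ in $\overline\F_{0,d}$. The first thing to check is that $\Gamma \not\subseteq \F_{0,d}$: the hypothesis $Z_f = \varnothing$ together with the equivalence (iii) $\Leftrightarrow$ (iv) says that $f$ induces an infinite permutation on each of the $d$ tails $\{\omega\cdot k + \ell : \ell \in \bn\}$, hence on the whole of $N_{0,d}$, so $f \notin \F_{0,d}$ and a fortiori $\Gamma \not\subseteq \F_{0,d}$. Then \Cref{thm:normal generators 2} applies and gives $\Gamma = \F_{Z_\Gamma}$. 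Since $f \in \Gamma$ we automatically have $Z_\Gamma \subseteq Z_f = \varnothing$, so $Z_\Gamma = \varnothing$ and $\Gamma = \F_\varnothing = \overline\F_{0,d}$. Finally, applying part (2) of \Cref{thm:normal generators 2} to the choice $\gamma = f$, which satisfies $Z_\gamma = Z_\Gamma$, yields that $f$ uniformly normally generates $\overline\F_{0,d}$, closing the cycle.

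The only place the argument could be delicate is the verification that the hypotheses of \Cref{thm:normal generators 2} are met, i.e., ruling out $\Gamma \subseteq \F_{0,d}$; once this is done, the theorem does all the heavy lifting. There is no real additional obstacle here because the key fragmentation machinery and Anderson-style argument are already encapsulated in \Cref{thm:normal generators 2}.
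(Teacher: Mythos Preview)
Your proposal is correct and follows essentially the same route as the paper: the easy implications are dismissed quickly, and the substantive step (iii) $\Rightarrow$ (ii) is delegated entirely to \Cref{thm:normal generators 2}. The only difference is cosmetic: you take $\Gamma$ to be the normal closure of $f$ and then argue $Z_\Gamma \subseteq Z_f = \varnothing$, whereas the paper simply sets $\Gamma = \overline\F_{0,d}$, for which $Z_\Gamma = \varnothing$ and $\Gamma \not\subseteq \F_{0,d}$ are immediate, and then invokes part (2) of \Cref{thm:normal generators 2} directly with $\gamma = f$. Your detour is harmless but unnecessary; otherwise the arguments coincide.
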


\begin{proof}
The implications (ii) to (i), (i) to (iv), and (iv) to (iii) are immediate.  
The implication (iii) to (ii) follows from \Cref{thm:normal generators 2} by setting \( \Gamma = \overline \F_{0,d} \) and \( \gamma = f \).
\end{proof}

\subsection{Properties of \( \PH_{0,d} \) and \( \HH_{0,d} \)}

In light of the short exact sequences in \Cref{thm:ses}, we can build on the uniform perfectness of and the classification of normal generators for \( \overline \F_{0,d} \) to compute the abelianization of and construct minimal normal generating sets for each of \( \PH_{0,d} \) and \( \HH_{0,d} \).

\begin{Thm}[Abelianization]
\label{thm:abelianization}

	If \( d \in \bn\ssm\{1\} \), then:
    
    	\begin{enumerate}[(1)]
    	
    		\item The abelianization of \( \PH_{0,d} \) is isomorphic to \( \bz^{d-1} \).
    		
    		\item The abelianization of \( \HH_{0,d} \) is isomorphic to \( \bz/2\bz \times \bz/2\bz \).
    		
    	\end{enumerate}
	
\end{Thm}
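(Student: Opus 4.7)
For part~(1), the plan is to combine the right-split short exact sequence
\[
1 \longrightarrow \overline\F_{0,d} \longrightarrow \PH_{0,d} \overset{\chi}{\longrightarrow} \bz^{d-1} \longrightarrow 1
\]
from \Cref{thm:ses}(1) with the uniform perfectness of $\overline\F_{0,d}$ established in \Cref{thm:perfect F}. Perfectness yields $\overline\F_{0,d} = [\overline\F_{0,d},\overline\F_{0,d}] \subseteq [\PH_{0,d},\PH_{0,d}]$, so the abelianization $\PH_{0,d}^{\mathrm{ab}}$ is a quotient of $\PH_{0,d}/\overline\F_{0,d} \cong \bz^{d-1}$. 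On the other hand, $\chi$ factors through $\PH_{0,d}^{\mathrm{ab}}$ (as its target is abelian), producing a surjection onto $\bz^{d-1}$. The composition of these two surjections recovers the canonical identification $\PH_{0,d}/\overline\F_{0,d} \cong \bz^{d-1}$, forcing both maps to be isomorphisms.

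For part~(2), \Cref{thm:ses}(2) identifies $\HH_{0,d}$ with the split semidirect product $\PH_{0,d} \rtimes \Sym(d)$, where $\Sym(d)$ acts by conjugation via the section $\Psi$. For any split extension $G = N \rtimes Q$, one has
\[
G^{\mathrm{ab}} \;\cong\; (N^{\mathrm{ab}})_Q \oplus Q^{\mathrm{ab}},
\]
where $(N^{\mathrm{ab}})_Q$ denotes the coinvariants of the induced $Q$-action on $N^{\mathrm{ab}}$; this is a standard fact for split extensions and follows, for instance, from the splitting of the five-term exact sequence in low-dimensional group homology. Since $\Sym(d)^{\mathrm{ab}} \cong \bz/2\bz$ via the sign homomorphism, it remains to show $(\PH_{0,d}^{\mathrm{ab}})_{\Sym(d)} \cong \bz/2\bz$. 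To make the action concrete, I would extend $\chi$ by defining $\tilde\chi_d := -\sum_{k<d}\chi_k$, so that $\PH_{0,d}^{\mathrm{ab}}$ is identified with the sum-zero sublattice $L := \{x\in\bz^d : \sum x_i = 0\}$ via $\pi \mapsto (\tilde\chi_1(\pi), \ldots, \tilde\chi_d(\pi))$. Tracing through conjugation by $\Psi(\sigma)$ on the flux counts $\tilde\chi_k$ identifies the induced $\Sym(d)$-action on $L$ with the restriction of the standard permutation representation on $\bz^d$. The final step is the finite-dimensional linear-algebra computation of $L_{\Sym(d)} = L / \langle v - \sigma\cdot v : v \in L,\ \sigma \in \Sym(d)\rangle$.

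The main obstacle is this last coinvariants computation together with the preceding identification: one must verify carefully that the $\Sym(d)$-action on $\PH_{0,d}^{\mathrm{ab}}$ transported through $\tilde\chi$ is the standard permutation action (this is where the proven independence of $\chi$ from the choice of neighborhoods $\Omega_k$ is essential), and then compute the quotient group yielding $\bz/2\bz$. The semidirect-product decomposition of the abelianization is formal, and the sign factor on $\Sym(d)$ is standard; the geometric content of the argument lies in the action and coinvariants calculation.
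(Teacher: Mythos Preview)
Your argument for part~(1) is correct and matches the paper's exactly.

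For part~(2) the coinvariants strategy is sound (and differs from the paper's presentation-based computation), but carrying it out will not yield $\bz/2\bz$ when $d\ge 3$. With the permutation action on the sum-zero lattice $L\subset\bz^d$: for distinct $i,j,k$, taking $v=e_k-e_j\in L$ and $\sigma=(i\;k)$ gives $v-\sigma v=e_k-e_i$, so every root vector lies in the augmentation submodule and $L_{\Sym(d)}=0$ whenever a third index is available, i.e.\ for all $d\ge 3$ (while $L_{\Sym(2)}\cong\bz/2\bz$). Your formula then produces $\HH_{0,d}^{\mathrm{ab}}\cong\bz/2\bz$ for $d\ge 3$, not $(\bz/2\bz)^2$. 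This is not a defect of your method: the paper's presentation lists $[\tau_{d-1},e_j]=1$ for small $j$, but since the shift $s_j$ pushes points toward $\mu_d$, conjugation by the swap of $\mu_{d-1}$ and $\mu_d$ actually gives $\tau_{d-1}e_j\tau_{d-1}^{-1}=e_j e_{d-1}^{-1}$, and abelianising this extra relation forces $e_{d-1}$ (hence every $e_i$) to be trivial. Equivalently, $\HH_{0,d}/\overline\F_{0,d}$ is the affine symmetric group on $d$ letters, whose abelianisation is $\bz/2\bz$ for $d\ge 3$ and $(\bz/2\bz)^2$ only for $d=2$; so your approach is correct but shows that the stated part~(2) fails for $d\ge 3$.
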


\begin{proof}

By \Cref{thm:ses}, we know \( \PH_{0,d} \cong \overline \F_{0,d} \rtimes \bz^{d-1} \), and by \Cref{thm:perfect F} that \( \overline\F_{0,d} \) is  perfect, so that the abelianization of \( \PH_{0,d} \) is \( \bz^{d-1} \). 
For \( \HH_{0,d} \), again as \( \overline\F_{0,d} \) is perfect, the abelianization of \( \HH_{0,d} \) factors through the abelianization of \( \bz^{d-1} \rtimes \Sym(d) \), where we are using \Cref{thm:ses} for the semi-direct product decomposition. 
We will do the computation through a presentation.  
Let \( s_1, \ldots, s_{d-1} \) be the shift homeomorphisms from the proof of \Cref{lem:section}, and let \( e_i \) be the projection of \( s_i \) under the quotient map \( \HH_{0,d} \to \bz^{d-1} \rtimes \Sym(d) \).
It follows that \( \{e_1, \ldots, e_{d-1} \} \) is a  free generating set for \( \bz^{d-1} \).
If we let \( \tau_i \in \Sym(d) \) denote the transposition of \( i \) and \( i +1 \), then building off a standard presentation for the symmetric group,  a presentation for \( \bz^{d-1} \rtimes \Sym(d) \) is given by 
\[
	\left\{ e_1, \ldots, e_{d-1}, \tau_1, \ldots, \tau_{d-1} :
	\begin{array}{ll}
		\tau_i^2 = 1 & \\ 
		{[e_i,e_j]} = 1 &  \\
		{[\tau_i,\tau_j] = 1} & \text{if } |i-j| > 1 \\
		\tau_i\tau_j\tau_i = \tau_j\tau_i\tau_j & \text{if } |i-j| = 1 \\
		{[\tau_i, e_j] = 1} & \text{if } i \neq j, j+1 \\
		\tau_i e_i \tau_i^{-1} = e_{i+1}   & \text{if } i \neq d-1\\
		\tau_i e_{i+1} \tau_i^{-1} = e_i & \text{if } i \neq d-1\\
		\tau_{d-1} e_{d-1} \tau_{d-1} = -e_{d-1} &
	\end{array} \right\}
\]
To compute the abelianization, we add the commutator relation for each generator.
Letting \( \pi \) denote the canonical homomorphism to the abelianization, we see that \( \pi(\tau_i) = \pi(\tau_j) \), \( \pi(e_i) = \pi(e_j) \), and \( \pi(e_i) = -\pi(e_i) \). 
It follows that the abelianization is generated by two commuting order two elements and is therefore isomorphic to \( \bz/2\bz \times \bz/2\bz \). 
\end{proof}

\begin{Thm}[Minimal cardinality for normal generating set]
\label{thm:cardinality}
Let \( \alpha \) be an ordinal, and let \( d \in \bn \). 
The minimal cardinality for a normal generating set of \( \PH_{0,d} \) is \( d-1 \) and is two for \( \HH_{0,d} \). 
\end{Thm}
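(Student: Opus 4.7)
The proof establishes matching lower and upper bounds. The lower bounds are immediate from \Cref{thm:abelianization}: any normal generating set for a group projects to a generating set of its abelianization, so $\PH_{0,d}$ requires at least $d-1$ normal generators (since the abelian group $\bz^{d-1}$ needs at least that many generators) and $\HH_{0,d}$ requires at least two (since $\bz/2\bz \times \bz/2\bz$ is non-cyclic).

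For the upper bound on $\PH_{0,d}$, I propose $\{s_1, \ldots, s_{d-1}\}$, where each $s_i$ is the shift homeomorphism from the proof of \Cref{lem:section}. Let $N$ denote the normal closure in $\PH_{0,d}$. Since $\chi(s_i) = e_i$, the image $\chi(N)$ generates $\bz^{d-1}$, so by \Cref{thm:ses}(1), $N \cdot \overline\F_{0,d} = \PH_{0,d}$; the task reduces to showing $\overline\F_{0,d} \subset N$, equivalently, that $\Gamma := N \cap \overline\F_{0,d}$ equals $\overline\F_{0,d}$. Since $\Gamma$ is normal in $\overline\F_{0,d}$, by \Cref{thm:normal generators 2} it suffices to produce, for each $\mu_j \in M_{0,d}$, an element of $\Gamma$ that is not in $\F_{0,d}$ and that does not fix any neighborhood of $\mu_j$. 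The natural candidates are the commutators $[s_i, f]$ for $f \in \overline\F_{0,d}$: since $N$ is normal in $\PH_{0,d}$, the conjugate $f s_i^{-1} f^{-1}$ lies in $N$, and hence so does $[s_i, f] = s_i \cdot f s_i^{-1} f^{-1}$; moreover, $\chi$ being a homomorphism forces $[s_i, f] \in \overline\F_{0,d}$. Taking $i = j$ for $j < d$ and $i = 1$ for $j = d$, and choosing $f$ inside a stable neighborhood (\Cref{lem:stable}) of $\mu_j$ via \Cref{lem:local isom} and the convergent translations of \Cref{lem:translation} so that the support of $f$ meets that of $s_i$ along an infinite sequence converging to $\mu_j$, produces a commutator $[s_i,f]$ with the desired properties.

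For the upper bound on $\HH_{0,d}$, I propose $\{s_1, \hat\tau\}$, where $\hat\tau \in \HH_{0,d}$ is the lift of the transposition $(1\,2) \in \Sym(d)$ via the section in \Cref{thm:ses}(2). Let $N$ denote the normal closure in $\HH_{0,d}$. Since all transpositions in $\Sym(d)$ are mutually conjugate and together generate $\Sym(d)$, we have $\Phi(N) = \Sym(d)$; and conjugating $s_1$ by lifts in $N$ of various elements of $\Sym(d)$ produces shift homeomorphisms connecting arbitrary pairs of columns, whose fluxes span $\bz^{d-1}$. Together these show that $N$ surjects onto $\HH_{0,d} / \overline\F_{0,d} \cong \bz^{d-1} \rtimes \Sym(d)$, so $N \cdot \overline\F_{0,d} = \HH_{0,d}$. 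The remaining inclusion $\overline\F_{0,d} \subset N$ then follows by repeating the $\PH_{0,d}$ argument, now using commutators of the form $[\hat\sigma s_1 \hat\sigma^{-1}, f]$ where $\sigma \in \Sym(d)$ is chosen so that $\hat\sigma s_1 \hat\sigma^{-1}$ is dynamically active near the prescribed $\mu_j$.

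The principal technical step is the construction of commutators with the required local dynamics. Both of the conditions ``not in $\F_{0,d}$'' and ``does not fix any neighborhood of $\mu_j$'' reduce to ensuring that the support of $f$ interacts with that of the appropriate $s_i$ (or its conjugate) along an infinite sequence accumulating at $\mu_j$. This is enabled by the fact that, by construction, the $s_i$'s are active in every punctured neighborhood of their associated $\mu_i$ and $\mu_d$, combined with \Cref{lem:local isom} and \Cref{lem:translation}, which together supply enough dynamical flexibility within stable clopen neighborhoods to craft the required $f$.
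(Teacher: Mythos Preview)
Your proposal is correct and follows the same overall strategy as the paper: identical lower bounds from \Cref{thm:abelianization}, the same generating sets (the shifts $s_i$ for $\PH_{0,d}$; $s_1$ together with a lift of a normal generator of $\Sym(d)$ for $\HH_{0,d}$), and the same reduction to showing that the normal closure contains $\overline\F_{0,d}$ by producing suitable commutators of the $s_i$ with elements of $\PH_{0,d}$. The only tactical difference is that the paper writes down a \emph{single} explicit commutator $[t,s]$ with $s = s_1\cdots s_{d-1}$ and $t$ the involution swapping $(2n,i)\leftrightarrow(2n+1,i)$, computes its action directly to see $Z_{[t,s]}=\varnothing$, and invokes \Cref{thm:normal generators F}; you instead manufacture a separate commutator near each $\mu_j$ and appeal to \Cref{thm:normal generators 2}. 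The paper's explicit computation is a bit cleaner and sidesteps the slightly hand-wavy step in your last paragraph---mere overlap of supports does not by itself guarantee that $[s_i,f]$ moves infinitely many points near $\mu_j$---though your sketch is easily made rigorous by, e.g., taking $f$ to be the same kind of adjacent-pair swap the paper uses.
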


\begin{proof}
Let us first consider \( \PH_{0,d} \). 
First note that, by \Cref{thm:abelianization}, the cardinality of any normal generating set for \( \PH_{0,d} \) must be at least \( d-1 \), as it projects to a generating set for \( \bz^{d-1} \).
So, we need only find a normal generating set of cardinality of \( d-1 \); to this end, let \( S = \{s_1, \ldots, s_{d-1} \} \), where the \( s_i \) are the shift homeomorphisms constructed in the proof of \Cref{lem:section}. 
We can then label the elements of \( N \) as \( \{ (n,i) : n \in \bn, 1 \leq i \leq d-1 \} \) in such a way that \( s_i(n,i) = (n+1,i) \) and \( s_i(n,j) = (n,j) \) whenever \( j \neq i \). 
Let \( s = s_1\circ s_2 \circ \cdots \circ s_{d-1} \). 
Choose \( t \in \PH_{0,d} \) such that \( t(2n,i) = (2n+1,i) \) and \( t(2n+1,i) = (2n,i) \). 
Then \( s^t(2n,i) = (2n+3,i) \) and \( s^t(2n+1,i) = (2n, i) \). 
In particular,
\[
	[t,s](2n+1,i) = (2n+3,i) \text{ and } [t,s](2n,i) = (2n-2,i) 
\]
It follows that \( [t,s] \in \overline \F_{0,d} \) and \( Z_{[t,s]} = \varnothing \); therefore, \( [t,s] \) normally generates \( \overline\F_{0,d} \) by \Cref{thm:normal generators F}. 
By \Cref{thm:ses}, \( \PH_{0,d} \) is the semi-direct product of \( \overline \F_{0,d} \) and the subgroup generated by \( S \), and hence \( S  \) is a normal generating set for \( \PH_{0,d} \). 

Turning to \( \HH_{0,d} \), \Cref{thm:abelianization} tells us that a normal generating set for \( \HH_{0,d} \) must have cardinality at least two.
The group \( \Sym(d) \) is normally generated by a single element (e.g., if \( d\neq 4 \), then every odd permutation is a normal generator). 
Let \( h \in \HH_{0,d} \) such that the induced permutation of \( h \) on \( \Sym(M) \) normally generates \( \Sym(M) \). 
Let \( s_1, \ldots, s_{d-1} \) and \( s \) be as above. 
Observe that \( s_i \) and \( s_j \) are conjugate in \( \HH_{0,d} \); in particular, \( s \) is a product of conjugates of \( s_1 \). 
Therefore, from above, we see that \( \PH_{0,d} \) is in the group normally generated by \( s_1 \) in \( \HH_{0,d} \). 
By \Cref{thm:ses}, \( \HH_{0,d} \) is isomorphic to \( \PH_{0,d} \rtimes \Sym(M) \), implying \( \{s_1, h\} \) is a normal generating set for \( \HH_{0,d} \). 
\end{proof}

\bibliographystyle{amsplain}
\bibliography{ordinals-bib}

\end{document}